\newtheorem{theorem}{Th\'eor\`eme}
\newtheorem{definition}[theorem]{Definition}
\newtheorem{remark}[theorem]{Remark}
\newtheorem{proposition}{Proposition}
\newcommand{\bea}{\begin{eqnarray}}
\newcommand{\eea}{\end{eqnarray}}
\newcommand{\beas}{\begin{eqnarray*}}
\newcommand{\eeas}{\end{eqnarray*}}
\def\[{\left[}
\def\]{\right]}
\def\<{\langle}
\def\>{\rangle}
\def\({\left(}
\def\){\right)}
\def\a{\alpha}
\newcommand{\Dt}{\partial_t}
\newcommand{\veps}{\varepsilon}
\newcommand{\Dlt}{\Delta t}
\newcommand{\dvg}{\nabla \cdot }
\newcommand{\mbb}{\mathbb}
\providecommand{\keywords}[1]
{
  \small	
  \textbf{\textit{Keywords---}} #1
}
\begin{document}

\title{\textbf{High order Asymptotic Preserving penalized numerical schemes for the Euler-Poisson system in the quasi-neutral limit}}
\author[1]{\small \textbf{Nicolas Crouseilles}} 
\author[2]{\small \textbf{Giacomo Dimarco}}
\author[3]{\small \textbf{Saurav Samantaray}} 
\affil[1]{Universit\'e de Rennes, Inria Rennes (Mingus team) and IRMAR UMR CNRS 6625, F-35042 Rennes, France \& ENS Rennes}
\affil[2]{Department of Mathematics and Computer Science \&
Center for Computing, Modeling and Statistics (CMCS), University of Ferrare, Italy}
\affil[3]{Department of Mathematics, Indian Institute Of Technology Madras, Chennai, India}
\date{}

\maketitle

\begin{abstract}
In this work, we focus on the development of high-order Implicit-Explicit (IMEX) finite volume numerical methods for plasmas in quasineutral regimes. At large temporal and spatial scales, plasmas tend to be quasineutral, meaning that the local net charge density is nearly zero. However, at small time and spatial scales, measured by the the Debye length, quasineutrality breaks down. In such regimes, standard numerical methods face severe stability constraints, rendering them practically unusable. To address this issue, we introduce and analyze a class of penalized IMEX Runge-Kutta methods for the Euler-Poisson (EP) system, specifically designed to handle the quasineutral limit. These schemes are uniformly stable with respect to the Debye length and degenerate into high-order methods as the quasineutral limit is approached. Several numerical tests confirm that the proposed methods exhibit the desired properties.
\end{abstract}
\keywords{quasi-neutral limit, Euler-Poisson, Asymptotic Preserving schemes}

\tableofcontents

\section{Introduction}
The modeling and numerical simulation of plasma phenomena is a highly active field of research, driven by its significance in both applied technologies and fundamental scientific exploration. Plasmas, often referred to as the fourth state of matter, are ionized gases that exhibit unique behaviors under the influence of electromagnetic fields \cite{Chen}. Their study is crucial for a broad range of technological applications, such as in the development of fusion energy, where confined plasmas hold the potential for generating nearly limitless clean energy \cite{Dimits2000969}. Plasmas are also essential in industries like semiconductor manufacturing, space propulsion systems, medical devices, and environmental control technologies, where plasma processes are used for applications such as waste treatment and air purification.

In this paper, we address the development of numerical methods for plasmas, focusing specifically on the challenges associated with handling quasineutrality. Our study concentrates on a one-fluid Euler-Poisson (EP) system, which models the dynamics of electrons in a plasma with a constant ion background. In this system, the density and momentum evolve according to a hyperbolic problem influenced by an electrostatic potential that solves an elliptic equation. Despite of its simplicity, it contains most of the challenges related to the treatment of the quasi neutrality in plasmas.

An important physical parameter in this context is the Debye length, $\lambda$, which measures the typical length scale of charge imbalances in a plasma. In quasineutral regions, the electric charges effectively cancel out. However, classical discretization methods fail to capture the small scales corresponding to the Debye length, which can be much smaller than the overall size of the system. Using explicit time discretization schemes in such cases leads to prohibitively expensive simulations \cite{Deg13}.

Furthermore, many practical situations, such as sheath formation, plasma diode modeling, or arc formation on satellite solar cells, involve the coexistence of both quasineutral and non-quasineutral regions. These scenarios require specialized numerical treatments. One promising approach involves the use of domain decomposition techniques, which have proven highly effective in managing multiple regimes in plasmas \cite{Dimarco_rispoli}. These methods offer a viable alternative to standard explicit numerical schemes that would otherwise necessitate resolving the smallest scale (the Debye length), leading to prohibitively high computational costs.
However, to the best of our knowledge, the use of domain decomposition methods to connect the asymptotic quasineutral model with the Euler-Poisson system has not yet been explored. This connection presents unique challenges, as developing appropriate numerical methods and accurately identifying the interface between different models requires specific attention. The task of ensuring a smooth transition between the models is complex and demands further research and development.

Several alternative approaches have been proposed in the literature to address the challenge posed by the coexistence of different regimes in plasmas. Among the most successful in recent years are the so-called Asymptotic-Preserving (AP) schemes \cite{ap_jin, ap_jin2}. These schemes are uniformly stable with respect to small parameters and transition smoothly into a consistent numerical scheme for the asymptotic model as the parameter tends to zero. In other words, the limit of the AP scheme is both valid and accurate for the corresponding limiting model.

For the Euler-Poisson system in the quasineutral limit, notable works include \cite{CDV_ASP_05, CDV07, Deg13, ACS24, ALVAREZLAGUNA2020109634}, as well as \cite{fabre, DLV08}, which provide comprehensive numerical analyses of the problem. Additional relevant studies on numerical methods for handling quasineutrality can be found in \cite{Bessemoulin, Belaouar, Crispel, Degond_deluzet, Crouseilles, Labrunie}.
More generally, Asymptotic-Preserving schemes have garnered significant interest in recent decades for their ability to approximate multiscale partial differential equations (PDEs), which are ubiquitous in plasma physics. For further exploration of this topic, see \cite{ap_jin, ap_jin2, Neg13, Deg13}. These AP schemes are often based on Implicit-Explicit (IMEX) time integrators, which have a long history of development and have benefited from numerous contributions. Important references on IMEX methods include \cite{ARW95, HW96, AscherRuuthSpiteri, KENNEDY2003139, PR01, BFR16, dimarco_imexrk}.

The primary goal of this paper is to design a class of high-order schemes for the Euler-Poisson system that are uniformly stable with respect to the Debye length and degenerate into consistent and accurate discretizations of the quasineutral system in the limit as the Debye length approaches zero. Although significant progress has been made in developing Asymptotic-Preserving (AP) schemes in the quasineutral limit \cite{CDV_ASP_05, CDV07, Deg13, ACS24, ALVAREZLAGUNA2020109634}, few contributions have focused on the development of high-order AP schemes.
Another key contribution of this work is the ability of the proposed schemes to handle very general initial conditions. Most of the existing studies on the quasineutral limit assume that the initial conditions are well-prepared, meaning they are compatible with the asymptotic model. However, this assumption can be quite restrictive in practice, as one would expect the numerical scheme to be capable of handling a broad range of initial conditions. In this paper, we explore the challenges posed by the choice of general initial conditions, the implications they have for the numerical solution, and we propose a strategy that ensures stability and consistency with the limit system, even when arbitrarily chosen initial conditions are used.

The strategy employed to ensure the above discussed properties is primarily based on an Implicit-Explicit (IMEX) time integration approach combined with a suitable penalization technique. Following the ideas from \cite{ALVAREZLAGUNA2020109634} and several works focused on the low Mach limit in fluid dynamics \cite{boscarino, BQR19, AS20}, an implicit scheme for the density $\rho$ can be derived by utilizing the divergence of the momentum equation, with the introduction of a penalization term, $\pm \nabla \phi$. This approach allows for the projection of the density onto the quasineutral state, enabling the Poisson equation to be solved in a stable manner.
On this path, we first introduce such a penalization technique and then extend the first-order approach through the use of IMEX Runge-Kutta methods. We successively analyze how a particular class of IMEX schemes \cite{dimarco_imexrk, diffusion1, diffusion2}, known as type A IMEX Runge-Kutta methods, ensures the Asymptotic-Preserving (AP) property for a broad class of initial conditions. This stands in contrast to other classes of methods, such as the CK IMEX Runge-Kutta schemes, which fail to handle initial data that are not consistent with the limit system. Moreover, the latter class of methods will be shown to be unstable in certain situations. 

The rest of the paper is organized as follows. In the next section, the Euler-Poisson model 
and its quasi-neutral limit are recalled. Then, the new time discretization is presented. In Section 4, the asymptotic preserving property is proved for arbitrarily initial data. In the same section. we illustrates the limits of particular class of IMEX schemes and we show that they not be able to provide the required asymptotic preserving property and stability. Section 5 details the space approximation and finally, in Section 6, some numerical results are proposed to illustrate the capability of the new scheme compared to other schemes from the literature. A final section permits to draw some conclusions and suggest future investigations.

\section{One-fluid Euler-Poisson System and its quasi-neutral limit}
\label{sec:ep_qn_model}
We consider in the following the Euler-Poisson (EP) equations. This system describes the time evolution of a fluid of electrons in a plasma where uniform
ion background is supposed. The non-dimensional Euler-Poisson 
system \cite{Deg13} reads as:
\begin{eqnarray}
\partial_t \rho + \nabla \cdot q &=& 0, \label{eq:ep_nd_mass}\\ 
\partial_t q + \nabla \cdot (\rho u\otimes u + p(\rho)I) &=& 
\rho\nabla\phi, \label{eq:ep_nd_mom}\\ 
\lambda^2\Delta \phi&=&\rho-1, \;\; x\in \mathbb{T},
\label{eq:ep_nd_poi} 
\end{eqnarray}
where $\rho(t, x)$ is the density, $q(t,x) := (\rho u)(t, x)$ 
is the momentum ($u$ being the mean velocity), $p(\rho) = \rho^{\gamma}$,
$\gamma \geq 1$ the isentropic pressure and $I$ denotes the identity matrix. 
The electric field $E$ obtained as the negative gradient of the electric 
potential $\phi(t,x)$, i.e. $E = -\nabla \phi$. The spatial differential operators
$\nabla$ and $\Delta$ used are the gradient and the Laplacian, respectively. 

The system is supplemented with initial conditions $(\rho, q)(t=0, x)$, while periodic boundary conditions are considered in space. The uniform ion background is represented by the constant
density $1$ in the Poisson equation \eqref{eq:ep_nd_poi}. The scaled Debyelength
$\lambda$ in \eqref{eq:ep_nd_poi} is microscopic in nature and makes the
Euler-Poisson system \eqref{eq:ep_nd_mass}-\eqref{eq:ep_nd_mom} singular in the limit of 
$\lambda \to 0$; see \cite{ACS24, Deg13}. 


In the quasi-neutral limit where there is no charge imbalance, i.e. the limit 
of $\lambda \to 0$ the Poisson equation \eqref{eq:ep_nd_poi} degenerates into 
$\rho = 1$, which is a singular limit. We end this section with a presentation 
of the asymptotic limit system obtained from the limit $\lambda \to 0$ in the 
EP system \eqref{eq:ep_nd_mass}-\eqref{eq:ep_nd_poi}, the quasi-neutral limit system. 
We refer the reader to \cite{CDV_ASP_05, Deg13} for more details, and to \cite{CG00} 
for theoretical aspects associated to the quasi-neutral limit.  
Let us assume that the variables $\rho, q$ and $\phi$ yield the following 
limits as $\lambda \to 0$:
\begin{equation}
  \label{ch:qn_limit_all}
  \lim_{\lambda \to 0} \rho = \rho_{(0)} =1, \quad 
  \lim_{\lambda \to 0 } q = q_{(0)} = \rho_{(0)} u_{(0)}, \quad
  \lim_{\lambda \to 0} \phi  = \phi_{(0)}.
\end{equation}
Hence, from \eqref{eq:ep_nd_mass}-\eqref{eq:ep_nd_poi} taking the limit $\lambda \to 0$ we formally 
obtain the following incompressible Euler system as the quasi-neutral limit of the EP system 
\eqref{eq:ep_nd_mass}-\eqref{eq:ep_nd_poi},
\begin{align}
    \dvg u_{(0)} &= 0, \label{eq:ep_mass_lim} \\
    \Dt u_{(0)} + \nabla\cdot\left(u_{(0)} \otimes u_{(0)} \right)&= 
    \nabla \phi_{(0)}, \label{eq:ep_mom_lim}\\ 
    \rho_{(0)} &=1. \label{eq:ep_poi_lim_}
\end{align} 
In the quasi-neutral limit negative electric potential $ - \phi_{(0)}$ serves as 
the incompressible pressure. Let us remark that the divergence free constraint 
$\nabla\cdot u_{(0)}=0$ can be reformulated. Indeed, taking the divergence of \eqref{eq:ep_mom_lim} and using \eqref{eq:ep_mass_lim} 
give 
$$
\nabla^2:\left(u_{(0)} \otimes u_{(0)} \right)= \Delta  \phi_{(0)},  
$$
so that the quasi-neutral limit system  \eqref{eq:ep_mass_lim}-\eqref{eq:ep_mom_lim} can be reformulated as 
\begin{eqnarray}
\label{qn_equations1}
\partial_t u_{(0)} + \nabla \cdot ( u_{(0)}\otimes u_{(0)} ) &=& \nabla\phi_{(0)}, \\ 
\label{qn_equations2}
\Delta \phi_{(0)}&=& \nabla^2 : ( u_{(0)}\otimes u_{(0)} ). 
\end{eqnarray}
Note that the initial data has to be well-prepared in the following sense for the solution of the 
EP system \eqref{eq:ep_nd_mass}-\eqref{eq:ep_nd_poi} to converge to the solution of the quasi-neutral
system \eqref{eq:ep_mass_lim}-\eqref{eq:ep_poi_lim_}. 
\begin{definition}
\label{def:wp_data}
    A triple $(\rho, q, \phi)$ of solution of the EP system \eqref{eq:ep_nd_mass}-\eqref{eq:ep_nd_poi} is 
    called well-prepared if it admits the following form: 
    \begin{equation}
        \begin{aligned}
            \rho = \rho_{(0)} + \lambda^2 \rho_{(2)}, & \quad q = q_{(0)} + \lambda^2 q_{(2)} \\
            \rho_{(0)} = 1, &\quad \dvg q_{(0)} = 0.
        \end{aligned}
    \end{equation}
\end{definition}
The goal of this work is to design time discretization 
of \eqref{eq:ep_nd_mass}-\eqref{eq:ep_nd_poi} which is uniformly stable 
with respect to $\lambda$ and degenerates when $\lambda\to 0$ towards a consistent discretisation 
of the asymptotic quasi-neutral model \eqref{eq:ep_mass_lim}-\eqref{eq:ep_poi_lim_}.      

\section{Time Discretisation}
\label{sec:TD}

To begin with we will first present a first order in time discretisation and then will propose a 
high order extension in the next subsection. Presentation of the first order discretisation is 
aimed towards providing an easy access into the design and analysis elements of the semi-discrete 
schemes developed in this paper. 
As the proposal of time semi-discrete scheme the basically constituted by 
two steps:
\begin{itemize}
    \item in the first step a simple discretisation of a reformulated EP system is proposed;
    \item in the second step an equivalent reformulated time-semi discrete scheme is obtained 
    from the scheme from the first step, which is equivalent to a fully explicit scheme.
\end{itemize}
Before introducing our strategy, let us consider a reformulation of
the EP system \eqref{eq:ep_nd_mass}-\eqref{eq:ep_nd_poi} which will form the basis for the time 
discretisations to follow. Indeed, the schemes are based on a penalisation procedure which consists 
of reformulating the momentum equation \eqref{eq:ep_nd_mom} as, 
\begin{eqnarray*}
\partial_t q + \nabla \cdot (\rho u\otimes u + p(\rho)I) &=& (\rho-1)\nabla\phi + \nabla\phi. 
\end{eqnarray*}
Note that the above equation is equivalent to the original momentum equation as the addition and 
subtraction of $\nabla \phi$ to it cancels out. With this formulation in hand, we introduce the 
following notations 
\begin{equation}\label{eq:notation_ref}
\begin{aligned}
    U:=(\rho, q)^T, \quad F:=(q\otimes q)/\rho + p(\rho) I, & \quad G_{\mbox{ex}} := (0, \nabla\cdot F - (\rho-1)\nabla\phi)^T, \quad G_{\mbox{im}} := (\nabla\cdot q, -\nabla\phi)^T \\
    & H(U, \phi) := -\lambda^2 \Delta \phi- \rho + 1
\end{aligned}
\end{equation}
so that the EP system \eqref{eq:ep_nd_mass}-\eqref{eq:ep_nd_poi} becomes 
\begin{equation}
\label{imex_reformulation} 
\begin{aligned}
\partial_t U + G_{\mbox{ex}} +  G_{\mbox{im}} &= 0, \\
H(U, \phi) &= 0.
\end{aligned}
\end{equation}
Based on this reformulation, an IMEX-ARK schemes can be written by considering a discretisation in which the mass flux 
and the term $\nabla \phi$ are treated implicitly.  

Before going to the presentation of the general case, let us consider the first order case 
which will serve as a good context to present the construction of the scheme. 

\subsection{First order time discretisation}
\label{first_order_subsection}
To discretise the EP system \eqref{eq:ep_nd_mass}-\eqref{eq:ep_nd_poi} in time, we propose the following first 
order time stepping by utilising the decomposition \eqref{imex_reformulation} of the EP system: 
\begin{equation}\label{eq:pen-IMEX-1st}
\begin{aligned}
\frac{U^{n+1} - U^n}{\Delta t} &= - G_{\mbox{ex}}^n - G_{\mbox{im}}^{n+1}, \\
\lambda^2\Delta \phi^{n+1} &= \rho^{n+1} -1.
\end{aligned}
\end{equation}
which, using the notations $U=(\rho, q)$ and $G_{\mbox{ex}}, G_{\mbox{im}}$ from \eqref{eq:notation_ref}, corresponds to 
\begin{eqnarray} 
\label{eq_rho}
\rho^{n+1} &=& \rho^n - \Delta t \nabla \cdot q^{n+1}, \\
q^{n+1} &=& q^n - \Delta t \nabla \cdot(\rho^n u^n\otimes u^n + p^n I) +\Delta t \rho^n\nabla\phi^{n}-\Delta t \nabla\phi^{n}+\Delta t \nabla\phi^{n+1}, \nonumber\\
\label{eq_q}
&=& q^n - \Delta t \nabla \cdot F^n +\Delta t (\rho^n-1)\nabla\phi^{n}+\Delta t \nabla\phi^{n+1}, \\
\label{ep_discrete}
\lambda^2\Delta \phi^{n+1} &=&\rho^{n+1} -1. 
\end{eqnarray} 
Note the above scheme is semi-implicit in nature and both, the mass and momentum updates have implicit terms present 
in it. In order to simplify the scheme with respect to implicit updates we insert the divergence of \eqref{eq_q} into \eqref{eq_rho} which leads to: 
$$
\rho^{n+1} = \rho^n - \Delta t \nabla\cdot q^{n+1} = \rho^n -\Delta t  \nabla\cdot q^n + \Delta t^2 \nabla^2 :   F^n + \Delta t^2 \nabla\cdot ( (\rho^n-1)\nabla \phi^n) -\Delta t^2 \Delta \phi^{n+1}. 
$$
Now, using the Poisson equation $\lambda^2\Delta \phi^{n+1} = \rho^{n+1}-1$ and subtracting $1$ on both sides give: 
$$
\rho^{n+1} -1= \rho^n-1 - \Delta t \nabla\cdot q^{n+1} = \rho^n - 1-\Delta t  \nabla\cdot q^n + \Delta t^2 \nabla^2 :   F^n + \Delta t^2 \nabla\cdot ( (\rho^n-1)\nabla \phi^n) -\frac{\Delta t^2}{\lambda^2}( \rho^{n+1}-1), 
$$
from which we deduce $\rho^{n+1}$ 
\begin{eqnarray}
\rho^{n+1}\hspace{-0.3cm} &=&\hspace{-0.3cm} 1+\frac{\lambda^2}{\lambda^2+ \Delta t^2} \Big[ \rho^n - 1 - \Delta t \nabla\cdot q^n + \Delta t^2 \nabla^2 :   F^n + \Delta t^2 \nabla\cdot ( (\rho^n-1)\nabla \phi^n) \Big]\nonumber\\
 \hspace{-0.3cm}&=&\hspace{-0.3cm} 1+\frac{\lambda^2}{\lambda^2+ \Delta t^2} \Big[ \rho^n - 1 - \Delta t \nabla\cdot q^n + \Delta t^2 \nabla^2 :   F^n + \Delta t^2 \nabla\rho^n\cdot \nabla \phi^n+\Delta t^2 (\rho^n -1)\Delta \phi^n  \Big]\nonumber\\
 \label{updaterho}
 \hspace{-0.3cm} &=&\hspace{-0.3cm} 1+\frac{\lambda^2}{\lambda^2+ \Delta t^2} \Big[ \rho^n - 1 - \Delta t \nabla\cdot q^n + \Delta t^2 \nabla^2 :   F^n + \Delta t^2 \nabla\rho^n\cdot \nabla \phi^n\Big]  + \frac{\Delta t^2}{\lambda^2+ \Delta t^2} (\rho^n-1)^2. 
\end{eqnarray} 
Under some assumptions on the initial conditions, 
$\rho^{n+1}$ is updated from \eqref{updaterho} 
so that we can prove $\rho^{n+1}=1+{\cal O}(\lambda^2)$. 
Of course, such assumptions are referred to as well-prepared initial conditions; see Definition~\ref{def:wp_data},
which we precisely, would like to overcome. This is an important aspect and will be discussed later. So, the
Poisson equation can be solved for $\phi^{n+1}$:
\begin{eqnarray}
\Delta  \phi^{n+1}\hspace{-0.3cm}&=&\hspace{-0.3cm}\frac{\rho^{n+1}-1}{\lambda^2}\nonumber\\
\label{updatephi}
\hspace{-0.3cm}&=&\hspace{-0.3cm} \frac{1}{\lambda^2+ \Delta t^2} \Big[ \rho^n - 1 - \Delta t \nabla\cdot q^n + \Delta t^2 \nabla^2 :   F^n + \Delta t^2 \nabla\rho^n\cdot \nabla \phi^n\Big]  + \frac{\Delta t^2}{\lambda^2+ \Delta t^2} \frac{(\rho^n-1)^2}{\lambda^2},
\end{eqnarray} 
and finally the momentum equation can be updated explicitly
\begin{equation} 
\label{updateq}
q^{n+1} =q^n - \Delta t \nabla \cdot F^n +\Delta t (\rho^n-1)\nabla\phi^{n}+\Delta t \nabla\phi^{n+1}.  
\end{equation} 
One has to check that the scheme enjoys the AP property. 
\begin{proposition}
Let us assume that the initial conditions i.e. 
$$
\rho^0=\rho(t=0), \qquad q^0=q(t=0), \quad \mbox{and} \quad \phi^0 = \phi(t=0),
$$
are well-prepared in the sense of Definition~\ref{def:wp_data} and $\lambda^2\phi^0 = \rho^0-1$. 
Then, the time semi-discrete scheme \eqref{updaterho}-\eqref{updateq} initialised with the above initial conditions enjoys the following properties when 
$\lambda\to 0$  
$$
\rho^{n} = 1+{\cal O}(\lambda^2), \quad \nabla\cdot u^{n}={\cal O}(\lambda^2), \quad \phi^{n}={\cal O}(1), \;\;\; n\geq 1. 
$$
Moreover, under the same conditions on the initial conditions, the scheme \eqref{updaterho}-\eqref{updateq} degenerates when $\lambda\to 0$ towards 
$$
u^{n+1} = u^n -\Delta t \nabla\cdot (u^n\otimes u^n) + \Delta t \nabla \phi^{n+1}, \;\;\;\;\; \Delta  \phi^{n+1}= \nabla^2 : (u^n\otimes u^n),  
$$
which is a first order discretisation of the asymptotic limit model \eqref{qn_equations1}-\eqref{qn_equations2}. 
\end{proposition}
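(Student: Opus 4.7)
The plan is to argue by induction on $n$, propagating the well-prepared structure through the update formulas \eqref{updaterho}, \eqref{updatephi}, \eqref{updateq}, and then passing to the limit $\lambda\to 0$ in each of these to identify the limiting scheme.

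For the base case, the hypothesis of well-preparedness gives $\rho^0-1=\lambda^2\rho_{(2)}=O(\lambda^2)$, $\nabla\cdot q^0=O(\lambda^2)$, and $\phi^0=O(1)$ via $\lambda^2\Delta\phi^0=\rho^0-1$. For the inductive step, I assume the three asymptotic bounds hold at time $n$ and examine the bracket in \eqref{updaterho}. The key observation is that among the four terms, $\rho^n-1$, $\Delta t\nabla\cdot q^n$ and $\Delta t^2 \nabla\rho^n\cdot\nabla\phi^n$ are $O(\lambda^2)$ by the induction hypothesis (noting $\nabla\rho^n=O(\lambda^2)$ as well), while $\Delta t^2\nabla^2:F^n$ is $O(1)$ since $F^n$ depends smoothly on $(\rho^n,q^n)$. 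Multiplication by the prefactor $\lambda^2/(\lambda^2+\Delta t^2)=O(\lambda^2)$ therefore gives an $O(\lambda^2)$ contribution, and the remaining term $\Delta t^2(\rho^n-1)^2/(\lambda^2+\Delta t^2)$ is $O(\lambda^4)$. This proves $\rho^{n+1}-1=O(\lambda^2)$. The bound $\nabla\cdot q^{n+1}=O(\lambda^2)$ then follows directly from the discrete continuity equation $\nabla\cdot q^{n+1}=(\rho^n-\rho^{n+1})/\Delta t$, and the bound on $\nabla\cdot u^{n+1}$ follows from $u=q/\rho$ combined with $\rho^{n+1}=1+O(\lambda^2)$ and $\nabla\rho^{n+1}=O(\lambda^2)$. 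Finally, \eqref{updatephi} yields $\Delta\phi^{n+1}=O(1)$ since the prefactor $1/(\lambda^2+\Delta t^2)$ multiplies an $O(1)$ bracket and the extra term $\Delta t^2(\rho^n-1)^2/[\lambda^2(\lambda^2+\Delta t^2)]$ reduces to $O(\lambda^2)$ thanks to the $O(\lambda^4)$ numerator.

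To establish the limiting scheme, I pass $\lambda\to 0$ in the already-derived updates. For the potential, the first bracket of \eqref{updatephi} loses its $\lambda^2$ prefactor, the small terms drop out, and one is left with $\Delta\phi^{n+1}=\nabla^2:F^n$. Since $\rho^n\to 1$, $q^n\to u^n$ and $p(\rho^n)\to 1$, while $\nabla^2:(p(\rho^n)I)=\Delta p(\rho^n)=O(\lambda^2)$, this reduces to $\Delta\phi^{n+1}=\nabla^2:(u^n\otimes u^n)$ as desired. For the velocity, \eqref{updateq} in the limit gives $q^{n+1}=q^n-\Delta t\nabla\cdot F^n+\Delta t\nabla\phi^{n+1}$ since $(\rho^n-1)\nabla\phi^n=O(\lambda^2)$, and again the pressure contribution $\nabla\cdot(p(\rho^n)I)=\nabla p(\rho^n)=O(\lambda^2)$ vanishes, leaving $u^{n+1}=u^n-\Delta t\nabla\cdot(u^n\otimes u^n)+\Delta t\nabla\phi^{n+1}$, which is the claimed consistent discretization of \eqref{qn_equations1}-\eqref{qn_equations2}.

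The main obstacle will be bookkeeping the asymptotic expansions carefully enough to make sure the apparently $O(1)$ contribution $\Delta t^2\nabla^2:F^n$ inside the bracket of \eqref{updaterho} really does get tamed to $O(\lambda^2)$ after multiplication by the prefactor $\lambda^2/(\lambda^2+\Delta t^2)$. This is precisely the effect of the penalization: the addition and subtraction of $\nabla\phi$ in the momentum equation, combined with solving a modified Helmholtz problem for $\rho^{n+1}$, enforces a damping mechanism that drives $\rho^{n+1}-1$ to size $O(\lambda^2)$ regardless of the size of $\nabla^2:F^n$. Once this key point is recognized, the rest of the induction and the passage to the limit reduce to routine Taylor expansion and the smoothness of $F$ and $p$ in $(\rho,q)$.
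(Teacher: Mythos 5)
Your proof is correct and follows the same overall strategy as the paper (induction, establishing the three constraints, then passing to the limit in each update), but you obtain the bound $\nabla\cdot q^{n+1}=\mathcal{O}(\lambda^2)$ by a different and cleaner route. The paper takes the divergence of \eqref{updateq}, passes to the limit in \eqref{updatephi} to get $\Delta\phi^{n+1}=\nabla^2:(u^n\otimes u^n)+\mathcal{O}(\lambda^2)$, and then substitutes to observe an exact cancellation of the $\mathcal{O}(1)$ contributions. You instead read off $\nabla\cdot q^{n+1}=(\rho^n-\rho^{n+1})/\Delta t$ directly from the implicit mass equation \eqref{eq_rho} and note that both $\rho^n$ and $\rho^{n+1}$ equal $1+\mathcal{O}(\lambda^2)$, which gives the result without any reference to the potential update. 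This is a genuine simplification: it decouples the constraint check from the derivation of the limiting scheme, avoids tracking the cancellation between $\nabla^2:(u^n\otimes u^n)$ and $\Delta\phi^{n+1}$, and in fact shows that the well-preparedness of $\nabla\cdot q^0$ is used only to get $\nabla\cdot u^1=\mathcal{O}(\lambda^2)$ (via $\rho^0-1=\mathcal{O}(\lambda^2)$), while the subsequent stages propagate the bound automatically through the mass equation. The paper's calculation, on the other hand, yields the limiting Poisson equation as an intermediate byproduct, so both routes end up covering the same ground for the second half of the proposition. One minor bookkeeping remark: for the step $\rho^{n+1}-1=\mathcal{O}(\lambda^2)$ what matters is only that the whole bracket in \eqref{updaterho} be $\mathcal{O}(1)$, which requires $\phi^n=\mathcal{O}(1)$ and boundedness of $(\rho^n,q^n)$ and their derivatives; the finer estimates $\rho^n-1=\mathcal{O}(\lambda^2)$ and $\nabla\cdot q^n=\mathcal{O}(\lambda^2)$ you list are not actually needed there, only for the subsequent steps.
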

\begin{proof}
Under the well-prepared initial conditions assumption, it is obvious from \eqref{updaterho} that $\rho^{n+1}=1+{\cal O}(\lambda^2)$. 
To prove $\nabla\cdot u^{n+1} = {\cal O}(\lambda^2)$, 
let us take the divergence of \eqref{updateq} to get 
\begin{eqnarray*}
\nabla \cdot q^{n+1} &=& \nabla \cdot q^n - \Delta t \nabla^2 :  F^n +\Delta t  \nabla \cdot \Big[(\rho^n-1)\nabla\phi^{n}\Big]+\Delta t \Delta\phi^{n+1}. 
\end{eqnarray*}
Using well-prepared initial conditions and the expression for $F$ from \eqref{eq:notation_ref} leads to 
\begin{eqnarray}
\label{check_divu}
\nabla \cdot q^{n+1} &=&   {\cal O}(\lambda^2) - \Delta t \nabla^2 :  (u^n\otimes u^n) +\Delta t \Delta\phi^{n+1}. 
\end{eqnarray}
Now, let us consider \eqref{updatephi}, which degenerates when $\lambda\to 0$ (still under well-prepared initial conditions) into  
\begin{equation}
\label{limit1_phi}
\Delta  \phi^{n+1}= \nabla^2 : (u^n\otimes u^n) +  {\cal O}(\lambda^2),  
\end{equation}
so that, substituting \eqref{limit1_phi} into \eqref{check_divu} 
leads to $\nabla\cdot q^{n+1} = \nabla\cdot u^{n+1} =  {\cal O}(\lambda^2)$. Finally, going back to \eqref{updateq} and using the previous limits gives
\begin{equation}
\label{limit1_u}
u^{n+1} = u^n - \Delta t \nabla \cdot (u^n\otimes u^n) + \Delta t\nabla\phi^{n+1}. 
\end{equation}
\end{proof}
To conclude, under well-prepared initial conditions assumption, this first order time semi-discrete scheme \eqref{updaterho}-\eqref{updateq} degenerates into a scheme consistent of the expected asymptotic model \eqref{qn_equations1}-\eqref{qn_equations2}. If the well-prepared initial conditions assumption is not satisfied, it is easy to see from \eqref{updaterho}, the last term $\Delta \phi^n$ is stiff (typically, $\Delta \phi^n = {\cal O}(1/\lambda^2)$ if $\rho^n$ is not well-prepared), so that $\rho^{n+1}$ is not close to $1$. In the next subsection, we will extend this first order discretisation to high order and will see that type-A IMEX-RK  schemes \cite{AscherRuuthSpiteri, PR01}, enable to overcome the well-preparedness assumption.
\begin{remark}
The order in which the unknowns are updated is different from the strategy proposed by \cite{ACS24, CDV_ASP_05, CDV07}. Indeed, in these works, thanks to a reformulated Poisson equation, the electric potential is first computed so that the momentum can be updated explicitly and, finally an explicit update as well for the density. Whereas, in the method proposed in the present work, the density is first computed explicitly, then the potential by solving the usual Poisson equation and finally an explicit update for the momentum. It shares some similarities with the work in \cite{ALVAREZLAGUNA2020109634}. 
\end{remark}

\subsection{IMEX-RK and indirect approach for DAEs of index 1}
The time discretisation approach \eqref{eq:pen-IMEX-1st} presented in the previous section is just first order accurate and can be generalised to high order in time by using a combination of IMEX-RK \cite{AscherRuuthSpiteri, PR01} schemes and indirect approach for index-1 DAEs \cite{hairer-wanner-2}. As a consequence we shall see, the subcategory of ARS called type-A schemes enables to overcome the restrictive well-prepared initial conditions assumption. We need to note that IMEX-RK schemes \cite{AscherRuuthSpiteri, PR01} are typically well suited for time discretisation of evolution equations of the form:
\begin{eqnarray*}
    \Dt V + \nabla \cdot \mathcal{F}(V) = 0.
\end{eqnarray*}
Whereas, the EP system \eqref{eq:ep_nd_mass}-\eqref{eq:ep_nd_poi} due to the presence of the Poisson equation \eqref{eq:ep_nd_poi} has an elliptic equation which doesn't evolve in time explicitly. Therefore, if we consider the first order discretisation \eqref{eq:pen-IMEX-1st} presented in the previous subsection, an elliptic problem was solved for, at each time step. The indirect approach for DAEs is particularly required to generalise the imposition of the elliptic problem appropriately, see \cite{ACS24} for a similar treatment. 

In this section what follows from now, we first recall the IMEX-RK schemes and its two subcategories called type-CK and type-A \cite{boscarino, dimarco_imexrk}. We also succintly present the indirect approach for index-1 DAEs. Finally, high order AP schemes are presented for the Euler-Poisson system in the quasi-neutral limit, utilising the design philosophy of \eqref{eq:pen-IMEX-1st}.   

\paragraph{IMEX-RK Time Discretisation\\}
IMEX-RK schemes provide a robust and efficient framework to design AP schemes for singular perturbation problems. In this work, we only consider a subclass of the IMEX-RK schemes, namely diagonally implicit (DIRK) schemes. An $s$-stage IMEX-RK scheme is characterised by the two $s\times s$ lower triangular matrices $\tilde{A} = (\tilde{a}_{i,j})$, and $A=(a_{i,j})$, the coefficients $\tilde{c}=(\tilde{c}_1,\tilde{c}_2,\ldots,\tilde{c}_s)$ and $c=(c_1, c_2, \ldots, c_s)$, and the weights $\tilde{\omega}=(\tilde{\omega}_1, \tilde{\omega}_2,
\ldots,\tilde{\omega}_s)$ and $\omega = (\omega_1, \omega_2, \ldots, \omega_s)$. Here, the entries of $\tilde{A}$ and $A$ satisfy the conditions $\tilde{a}_{i,j}=0$ for $ j \geq i$, and $a_{i,j}=0$ for $j>i$. Let us consider the following stiff system of ODEs in an additive form:   
\begin{equation}
  \label{eq:stiff_ODE}
  y^\prime = f(t,y) + \frac{1}{\veps} g(t,y),
\end{equation}
where $0<\veps\ll 1$ is called the stiffness parameter. The functions $f$ and $g$ are known as, respectively, the non-stiff part and the stiff part of the system \eqref{eq:stiff_ODE}; see, e.g.\ \cite{hairer-wanner-2}, for a comprehensive treatment of such systems. 

Let $y^n$ be a numerical solution of \eqref{eq:stiff_ODE} at
time $t^n$ and let $\Dlt$ denote a fixed time-step. An $s$-stage
IMEX-RK scheme, cf.,\ e.g.\ \cite{AscherRuuthSpiteri, PR01}, updates $y^n$ to $y^{n+1}$ through $s$ intermediate stages:  
\begin{align}
  Y^{(i)} &= y^n + \Dlt \sum\limits_{j=1}^{i-1}\tilde{a}_{i,j} f(t^n + \tilde{c}_j\Dlt, Y^{(j)}) + 
        \Dlt\sum \limits_{j=1}^s a_{i,j} \frac{1}{\veps}g(t^n + c_j
        \Dlt,Y^{(j)}), \ 1 \leq i \leq s, \label{eq:imex_Yi} \\
  y^{n+1} &= y^n  + \Dlt \sum\limits_{i=1}^{s}\tilde{\omega}_{i} f(t^n
            + \tilde{c}_i\Dlt, Y^{(i)}) + \Dlt\sum \limits_{i=1}^s
            \omega_{i}\frac{1}{\veps} g(t^n +
            c_i\Dlt,Y^{(i)}), \label{eq:imex_yn+1} 
\end{align}
where the coefficients $\tilde{a}_{i,j}, a_{i,j}$, 
$\tilde{c}_{i}, c_{i}$ and $\tilde{\omega}_{i}, \omega_{i}$ 
are components of the so-called Butcher tableau 
\begin{figure}[htbp]
  \centering
  \begin{tabular}{c|c}
    $\tilde{c}^T$	&$\tilde{A}$\\
    \hline 
			&$\tilde{\omega}^T$
  \end{tabular}
  \quad
  \begin{tabular}{c|c}
    $c^T$	&$A$\\
    \hline 
		&$\omega^T$
  \end{tabular}
  \caption{Double Butcher tableau of an IMEX-RK scheme.}
  \label{fig:butcher_tableau}
\end{figure}
with $\tilde{c}, \tilde{b},c,b\in\mathbb{R}^s$ and $\tilde{A},A \in{\cal M}_{s, s}(\mathbb{R})$. 
As mentioned above we consider two subcategories of IMEX-RK namely, the type-A and type-CK schemes which are defined below; see \cite{boscarino, dimarco_imexrk, KENNEDY2003139} for details. 
\begin{definition}
\label{eq:typeA_CK}
An IMEX-RK scheme is said to be of 
\begin{itemize}
\item type-A, if the matrix $A$ is invertible; 
\item type-CK, if the matrix $A \in \mbb{R}^{s \times s}, \ s \geq 2$,
  can be written as  
\begin{equation*}  
  A =  
  \begin{pmatrix}
    0 & 0 \\
    \alpha & A_{s-1 }
  \end{pmatrix},
\end{equation*}
where $\alpha \in \mbb{R}^{s-1} $ and $A_{s-1} \in \mbb{R}^{s-1 \times
s-1}$ is invertible.
\end{itemize} 
\end{definition}
Another property which is crucial to obtain the AP property is that the IMEX-RK scheme is globally stiffly accurate.
\begin{definition}
\label{eq:GSA}
An IMEX-RK scheme with the Butcher tableau given in
Figure~\ref{fig:butcher_tableau} is said to be globally stiffly
accurate (GSA), if
\begin{equation} 
  \tilde{a}_{s,j} = \tilde{\omega}_{j},  \quad a_{s,j} = \omega_{j}  \ \mbox{for all} \  j = 1,
\ldots, s.  
\end{equation}
\end{definition}

\paragraph{Indirect Approach for RK Methods for DAEs of Index 1\\}
It has to be noted that even though implicit RK schemes were primarily designed for stiff systems of ODEs, these methods can be systematically extended to even broader classes of equations, such as DAEs. What follows is a brief exposure
to the so-called indirect approach \cite{HW96} to design implicit RK schemes for index-1 DAEs. Dedicated to this aim, we consider an arbitrary set DAEs of index 1 in the form    
\begin{align}
  y^{\prime} &= f(y, z), \label{eq:ep_DAE_I1_1}\\
  0 &= g(y,z), \label{eq:ep_DAE_I1_2}
\end{align} 
where $f$ and $g$ are sufficiently smooth functions, and $y$ and $z$ are vectors of appropriate dimensions with respect to the functions $f$ and $g$.

We now consider an $s$-stage implicit RK method defined by the triple $(A,c,\omega)$, where $A=(a_{i,j})$ are the coefficients, $c=(c_j)$ are the intermediate times and $\omega=(\omega_{j})$ are the weights. The indirect approach \cite{HW96} defines an RK scheme for \eqref{eq:ep_DAE_I1_1}-\eqref{eq:ep_DAE_I1_2} with intermediate stages $(Y^{(i)},Z^{(i)})$ in the following way: 
\begin{align}
  Y^{(i)} &= y^n + \Dlt \sum_{j = 1}^s a_{i,j} f(Y^{(j)}, Z^{(j)}),\ i = 1, 2, \ldots, s, \label{eq:ep_Dapp_RK_1} \\
  0 &= g(Y^{(i)}, Z^{(i)}), \ i = 1, 2, \ldots, s\label{eq:ep_Dapp_RK_2}, 
\end{align}
where the last row of the coefficient matrix $A$ is taken to be equal to the vector $\omega$ (ie the implicit RK scheme is stiffly accurate (SA)). Hence, the numerical solution $(y^{n+1}, z^{n+1})$ is defined by $y^{n+1}=Y^s$ and $z^{n+1}=Z^s$
\begin{equation} \label{eq:ch0_eq_cond_dapp_n+1}
  g(y^{n+1}, z^{n+1}) = g(Y^{(s)}, Z^{(s)}) = 0.
\end{equation}
Note that the indirect approach enables us to ensure that the manifold $0 = g(y, z)$  
is preserved by all the intermediate stages and automatically by the
update stage. For a detailed discussion on the derivation, validity
and stability analysis of the direct approach of implicit RK schemes
applied to index-1 DAEs, interested readers may refer to \cite{HW96}.
\begin{remark}
  The manifold preserving property \eqref{eq:ch0_eq_cond_dapp_n+1} of
  the indirect approach plays a crucial role in obtaining a consistent
  discretisation of non-evolutionary equations, such as the Poisson
  equation in the EP system \eqref{eq:ep_nd_mass}-\eqref{eq:ep_nd_poi}. 
\end{remark}
The indirect approach presented in \cite{HW96}, and briefed above, in its core, takes implicit RK schemes for ODEs and derives implicit schemes for DAEs. It becomes computationally very expensive when tackling problems, especially with nonlinearities, such as the EP system  \eqref{imex_reformulation} to use fully implicit solvers.
To gain computational efficiency we optimise the dosage of implicitness combining the indirect approach platform with the IMEX-RK schemes \eqref{eq:imex_Yi}-\eqref{eq:imex_yn+1}.

\subsection{High Order Time Semi-discretisation}
\label{ssec:highopimex}
Inspired by the scheme in \eqref{eq:pen-IMEX-1st}, we employ a combination of the ARS IMEX-RK and the indirect approach to the reformulation \eqref{imex_reformulation} of the EP system to get the following high order extension: 

The intermediate RK stages are, for $i=1, \dots, s,$:
\begin{eqnarray}
\label{ho_scheme1}
U^{(i)} &=& U^n - \Delta t \sum_{j=1}^{i-1} \Big[ \tilde{a}_{i,j}G_{\mbox{ex}}^{(j)} + {a}_{i,j}G_{\mbox{im}}^{(j)})\Big] -\Delta t  {a}_{i,i} G_{\mbox{im}}^{(i)},\\ 
\lambda^2 \Delta \phi^{(i)} &=& \rho^{(i)}-1 \\ 
\label{ho_scheme1_phi}
\end{eqnarray}
and the final update at time $t^{n+1} = t^n + \Dlt$ is:
\begin{eqnarray}
\label{ho_scheme}
U^{n+1} &=& U^n -\Delta t \sum_{j=1}^{s} \Big[ \tilde{\omega}_j G_{\mbox{ex}}^{(j)} + {\omega}_{j} G_{\mbox{im}}^{(j)}\Big], \\
\lambda^2 \Delta \phi^{n+1} &=& \rho^{n+1}-1, 
\label{ho_scheme_phi}
\end{eqnarray}
where the coefficients $\tilde{a}_{i,j}, {a}_{i,j}$ and $\tilde{\omega}_j, {\omega}_j$ are usually prescribed using Butcher tableaux Figures~\ref{fig:butcher_tableau} involving matrices 
$\tilde{A}$ and $A$ as discussed in the previous subsection. As mentioned before we will only use GSA schemes, a consequence of this choice is the last stage $s$  of \eqref{ho_scheme1}-\eqref{ho_scheme1_phi} is the same as \eqref{ho_scheme}-\eqref{ho_scheme_phi}. Therefore, in the sequel we will assume the stage $s$ corresponds to the iteration update i.e. $(U^{n+1}, \phi^{n+1}) = (U^{(s)}, \phi^{(s)})$. Note that the Poisson equation \eqref{ho_scheme1_phi} is a result of the indirect approach.

Now, using the components $U=(\rho, q)$, the scheme \eqref{ho_scheme1}-\eqref{ho_scheme1_phi} can be written as,  for $i=1,\dots, s$   
\begin{eqnarray}
\label{update_rho_ho_pril}
\rho^{(i)} &=& \rho^n - \Delta t \sum_{j=1}^{i-1}  {a}_{i,j} \nabla\cdot q^{(j)}  - \Delta t  {a}_{i,i} \nabla\cdot  q^{(i)} = \widehat{\rho}^{(i)}- \Delta t  {a}_{i,i} \nabla\cdot  q^{(i)},  \\
\label{update_q_ho_nohat}
q^{(i)} &=& q^n - \Delta t \sum_{j=1}^{i-1} \Big[ \tilde{a}_{i,j} \Big(\nabla\cdot F^{(j)} -  (\rho^{(j)}-1)\nabla\phi^{(j)} \Big) - {a}_{i,j} \nabla \phi^{(j)}\Big] + \Delta t  {a}_{i,i}\nabla \phi^{(i)}\\
\label{update_q_ho}
&=& \widehat{q}^{(i)} + \Delta t  {a}_{i,i}\nabla \phi^{(i)},\\
\label{update_phi_ho}
\lambda^2 \Delta \phi^{(i)} &=& \rho^{(i)} - 1, 
\end{eqnarray}
where the notations $\widehat{\rho}^{(i)}$ and $\widehat{q}^{(i)}$ are defined by 
\begin{equation}
\label{def_hat}
\widehat{\rho}^{(i)} =\rho^n - \Delta t \sum_{j=1}^{i-1}  {a}_{i,j} \nabla\cdot q^{(j)}, \;\;\;  \widehat{q}^{(i)} =q^n - \Delta t \sum_{j=1}^{i-1} \Big[ \tilde{a}_{i,j} \Big(\nabla\cdot F^{(j)} -  (\rho^{(j)}-1)\nabla\phi^{(j)} \Big) - {a}_{i,j} \nabla \phi^{(j)}\Big].   
\end{equation}
Written under this form, the scheme looks like an implicit-cost like scheme. In the sequel, we will manipulate the equations  
in the spirit of what was done for the first order case in Subsection \ref{first_order_subsection} and see the scheme does not require any nonlinear solver and has the cost of fully explicit scheme. 

To this end, taking the divergence of the momentum equation \eqref{update_q_ho} gives  
\begin{eqnarray}
\nabla\cdot q^{(i)} &=& \nabla\cdot \widehat{q}^{(i)} + \Delta t a_{i,i} \Delta \phi^{(i)}, \nonumber\\
\label{div_qi}
&\hspace{-1cm}=&\hspace{-0.6cm}\nabla\cdot q^n - \Delta t \sum_{j=1}^{i-1} \Big[ \tilde{a}_{i,j} \Big(\nabla^2 : F^{(j)} -  \nabla\cdot \Big((\rho^{(j)}-1)\nabla\phi^{(j)}\Big) \Big) - {a}_{i,j} \Delta \phi^{(j)}\Big] + \Delta t  {a}_{i,i}\Delta \phi^{(i)}, 
\end{eqnarray}
and inserting the above expression for $\nabla\cdot q^{(i)}$ in the mass update \eqref{update_rho_ho_pril} gives, 
\begin{eqnarray*}
\rho^{(i)} &=& \widehat{\rho}^{(i)}
 - \Delta t  {a}_{i,i} \left\{\nabla\cdot q^n - \Delta t \sum_{j=1}^{i-1} \Big[ \tilde{a}_{i,j} \Big(\nabla^2 : F^{(j)} -  \nabla\cdot \Big((\rho^{(j)}-1)\nabla\phi^{(j)}\Big) \Big) - {a}_{i,j} \Delta \phi^{(j)}\Big] + \Delta t  {a}_{i,i}\Delta \phi^{(i)}\right\}, \nonumber\\
&=&   \widehat{\rho}^{(i)}- \Delta t  {a}_{i,i} \nabla\cdot q^n \nonumber\\
&&+\Delta t^2  a_{i,i}  \sum_{j=1}^{i-1} \left\{ \tilde{a}_{i,j} \Big(\nabla^2 : F^{(j)} -  \nabla\cdot \Big((\rho^{(j)}-1)\nabla\phi^{(j)}\Big) \Big) - {a}_{i,j} \Delta \phi^{(j)}\right\} - \Delta t^2  {a}^2_{i,i}\Delta \phi^{(i)}. \nonumber
\end{eqnarray*}
Using the Poisson equation $\lambda^2 \Delta \phi^{(i)} = \rho^{(i)}-1$ to substitute for $\Delta \phi^{(i)}$ and subtracting $1$ from both sides in the above equation gives, 
\begin{eqnarray}
\rho^{(i)} &=& 1 +\frac{\lambda^2}{\lambda^2+\Delta t^2 {a}^2_{i,i} } \left\{ \rho^n - 1 -\Delta t \sum_{j=1}^{i-1} \tilde{a}_{i,j} \nabla\cdot q^{(j)}- \Delta t  {a}_{i,i} \nabla\cdot q^n \right.\nonumber\\
\label{update_rho_ho}
&&\left.+\Delta t^2  {a}_{i,i}  \sum_{j=1}^{i-1} \left\{ \tilde{a}_{i,j} \Big(\nabla^2 : F^{(j)} -  \nabla\cdot \Big((\rho^{(j)}-1)\nabla\phi^{(j)}\Big) \Big) - {a}_{i,j} \Delta \phi^{(j)}\right\} \right\}. 
\end{eqnarray}
The above version of $\rho^{(i)}$ would serve towards the analysis of the scheme to be presented in the ensuing section. Computationally, the following form using the notations  $\widehat{\rho}^{(i)}$ and $\widehat{q}^{(i)}$ from \eqref{def_hat} is more appropriate:
\begin{equation}
\label{update_rho_ho_comp}
    \rho^{(i)} = 1 + \frac{\lambda^2}{\lambda^2 + \Dlt^2 a_{i. i}^2} \left( \widehat{\rho}^{(i)} - 1 -\dvg \widehat{q}^{(i)} \right)
\end{equation}
\begin{remark}
Note that the first stage only considers the implicit part and as such deserves to be written down separately. Indeed, for $i=1$, \eqref{update_rho_ho} becomes (using $\widehat{\rho}^{(i)}=\rho^n$ and $\widehat{q}^{(i)}=q^n$)
\begin{eqnarray}
\label{update_rho_1st}
\rho^{(1)} &=& 1+ \frac{\lambda^2}{\lambda^2+\Delta t^2 {a}^2_{1,1} }\Big( \rho^n -1 - {a}_{1,1}\Delta t \nabla\cdot q^n\Big)\\
\label{update_q_1st}
q^{(1)} &=& q^n +{a}_{1,1} \Delta t \nabla \phi^{(1)}, \\
\label{update_phi_1st}
\lambda^2\Delta \phi^{(1)} &=& \rho^{(1)}-1. 
\end{eqnarray}
\end{remark}
The scheme can be summarised in Algorithm \ref{algo_epqn} 
\begin{algorithm}\label{algo_epqn}
\caption{High order AP scheme for Euler-Poisson} 
	\begin{algorithmic}[1]
		\For {$i=1,2,\ldots, s$}
\State compute $\rho^{(i)}$ using \eqref{update_rho_ho_comp} 
\State compute $\phi^{(i)}$ using \eqref{update_phi_ho} 
\State compute $q^{(i)}$ using \eqref{update_q_ho_nohat} 
\EndFor
	\end{algorithmic} 
\end{algorithm}
\begin{remark}
    Observe that for the penalised-IMEX scheme the update \eqref{update_rho_ho} of $\rho^{(i)}$  is a fully-explicit evaluation, followed by obtaining the solution of an elliptic problem and then finally the momentum is updated explicitly via \eqref{update_q_ho}. This is equivalent to a standard explicit scheme for the EP system where only one elliptic problem is required to be solved and rest all are explicit updates; see \cite{Deg13, CDV07}. 
\end{remark}

\section{Asymptotic Preserving property}
The goal of this section is to  investigate the behaviour of the penalised-IMEX scheme Algorithm~\ref{algo_epqn} when $\lambda\to 0$, for $\Delta t>0$ fixed. We will consider general type-A schemes as is presented in the previous section. First, we will check if the following constraints are satisfied
\begin{equation}
\rho^n-1={\cal O}(\lambda^2), \;\;\; \nabla\cdot u^n ={\cal O}(\lambda^2), \;\;\; \phi^n={\cal O}(1), \;\;\; n\geq 2,
\end{equation}
which will ensure the compatibility of the numerical solution with the asymptotic model \eqref{eq:ep_mass_lim}-\eqref{eq:ep_poi_lim_}. 
Next, we will prove the penalised-IMEX scheme degenerates as $\lambda\to 0$ towards a high order time discretisation of the asymptotic model \eqref{eq:ep_mass_lim}-\eqref{eq:ep_poi_lim_}. 

\subsection{Preserving the constraints}
In the next proposition, we check the unknown 
of the scheme are correctly projected to the quasi-neutral equilibrium, i.e. $\rho=1$ and $\nabla\cdot u =0$ for general initial conditions.
\begin{proposition} 
\label{prop1}
The  $s$-stage type-A IMEX scheme \eqref{update_rho_ho}-\eqref{update_phi_ho}-\eqref{update_q_ho_nohat} is a time  approximation of \eqref{eq:ep_nd_mass}-\eqref{eq:ep_nd_mom}-\eqref{eq:ep_nd_poi}. Moreover, for any initial conditions $\rho^0=\rho(t=0), q^0=q(t=0)$ with $\lambda^2\Delta \phi^0 = \rho^0-1$, as $\lambda\to 0$, the scheme  satisfies the following properties 
$$
\rho^n-1={\cal O}(\lambda^2), \;\;\; \nabla\cdot u^n ={\cal O}(\lambda^2), \;\;\; \phi^n={\cal O}(1), \;\;\; n\geq 2. 
$$
\end{proposition}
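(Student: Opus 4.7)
The plan is to establish the three bounds by induction on the intermediate RK stage index $i$, closed by the GSA and type-A properties. The central identity is \eqref{update_rho_ho_comp}, or more precisely
\begin{equation*}
\rho^{(i)} - 1 \;=\; \frac{\lambda^2}{\lambda^2 + \Delta t^2\, a_{i,i}^2}\Bigl(\widehat{\rho}^{(i)} - 1 - \Delta t\, a_{i,i}\, \nabla \cdot \widehat{q}^{(i)}\Bigr),
\end{equation*}
whose prefactor is $O(\lambda^2)$ as $\lambda \to 0$ \emph{precisely because} $a_{i,i} \neq 0$: the type-A assumption that $A$ is invertible forces every diagonal entry of the lower triangular matrix $A$ to be nonzero. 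I would then argue by induction on $i \in \{1,\ldots, s\}$: assuming $\rho^{(j)}, q^{(j)}, \phi^{(j)}$ uniformly bounded in $\lambda$ for $j < i$, the predictors $\widehat{\rho}^{(i)}, \widehat{q}^{(i)}$ from \eqref{def_hat} are $O(1)$, so the identity above gives $\rho^{(i)} - 1 = O(\lambda^2)$. The stage Poisson equation \eqref{update_phi_ho} then yields $\Delta \phi^{(i)} = O(1)$, hence $\phi^{(i)} = O(1)$, and finally \eqref{update_q_ho} gives $q^{(i)} = \widehat{q}^{(i)} + \Delta t\, a_{i,i}\nabla\phi^{(i)} = O(1)$, closing the induction. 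Because the scheme is GSA, its last stage coincides with the full update, so $\rho^n - 1 = O(\lambda^2)$ and $\phi^n = O(1)$ for every $n \geq 1$.

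The more delicate bound is $\nabla \cdot u^n = O(\lambda^2)$ for $n \geq 2$. From $n = 1$ onward I can use $\rho^n - 1 = O(\lambda^2)$ as an input at the start of the step. Inserting the stage bound $\rho^{(i)} - 1 = O(\lambda^2)$ into \eqref{update_rho_ho_pril} together with the definition of $\widehat{\rho}^{(i)}$ yields, for every $i$,
\begin{equation*}
\Delta t \sum_{j=1}^{i} a_{i,j}\, \nabla \cdot q^{(j)} \;=\; (\rho^n - 1) - (\rho^{(i)} - 1) \;=\; O(\lambda^2).
\end{equation*}
Collecting these $s$ identities gives the lower triangular linear system $A\mathbf{v} = O(\lambda^2)\mathbf{1}$ for the vector $\mathbf{v} = (\nabla \cdot q^{(1)},\ldots, \nabla\cdot q^{(s)})^T$. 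Invertibility of $A$ (type-A) then yields $\nabla \cdot q^{(j)} = O(\lambda^2)$ for every $j$, in particular $\nabla \cdot q^{n+1} = \nabla \cdot q^{(s)} = O(\lambda^2)$ by GSA. Combined with $\rho^{n+1} = 1 + O(\lambda^2)$, this gives $\nabla \cdot u^{n+1} = O(\lambda^2)$ for $n + 1 \geq 2$.

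Consistency of the scheme with the Euler-Poisson system is immediate from its construction as an IMEX-RK applied to the reformulation \eqref{imex_reformulation}, augmented at every stage by the Poisson equation through the indirect DAE approach of Section~\ref{sec:TD}. The step I expect to be most delicate is precisely the divergence bound: it simultaneously needs $A$ invertible (type-A) and the starting density bound $\rho^n - 1 = O(\lambda^2)$, which is why $\nabla \cdot u^n = O(\lambda^2)$ can only be asserted from $n \geq 2$ and not from $n = 1$. This is also the exact mechanism separating type-A from CK schemes: in the CK case $a_{1,1} = 0$, the prefactor in the first stage degenerates to $1$ and $\rho^{(1)} - 1$ inherits the full $O(1)$ error from the unprepared initial data, breaking the entire chain.
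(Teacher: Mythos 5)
Your proof is correct and relies on the same core mechanism as the paper's, but your treatment of the divergence bound is a genuinely cleaner reformulation, so a comparison is worthwhile. For $\rho^{(i)} - 1 = O(\lambda^2)$ and $\phi^{(i)} = O(1)$ your stage induction is essentially the paper's: the prefactor $\lambda^2/(\lambda^2 + \Delta t^2 a_{i,i}^2)$ in \eqref{update_rho_ho_comp} is $O(\lambda^2)$ precisely because invertibility of $A$ forces $a_{i,i}\neq 0$. For $\nabla\cdot u^n = O(\lambda^2)$, $n\geq 2$, the paper re-runs a stage-by-stage induction during the second iteration, expanding $\nabla\cdot q^{(i)}$ via $\widehat q^{(i)}$ (i.e.\ \eqref{div_qi} and \eqref{nb_rho}) and checking term by term that every contribution is $O(\lambda^2)$. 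You instead read the \emph{unreformulated} mass update \eqref{update_rho_ho_pril} as the lower triangular system $\Delta t\sum_{j\leq i}a_{i,j}\nabla\cdot q^{(j)} = \rho^n - \rho^{(i)}$ and observe that once $\rho^n - 1$ and every $\rho^{(i)} - 1$ are $O(\lambda^2)$, the right-hand side is $O(\lambda^2)$ componentwise, so invertibility of $A$ gives $\nabla\cdot q^{(j)} = O(\lambda^2)$ for all stages in a single step, without ever touching the momentum update. This replaces the paper's forward-substitution calculation by one linear-algebraic remark and is a more economical and more transparent argument; it also makes clear exactly where the one-iteration delay comes from (it needs $\rho^n - 1 = O(\lambda^2)$ as input). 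One small imprecision to tighten: in your first induction, the hypothesis that $(\rho^{(j)}, q^{(j)}, \phi^{(j)})$ are merely ``uniformly bounded'' is, as stated, too weak, because $\nabla\cdot\widehat q^{(i)}$ contains $\Delta\phi^{(j)} = (\rho^{(j)} - 1)/\lambda^2$, which is bounded only because you do in fact establish $\rho^{(j)} - 1 = O(\lambda^2)$ stage by stage. The induction hypothesis should explicitly carry $\rho^{(j)} - 1 = O(\lambda^2)$ (equivalently $\Delta\phi^{(j)} = O(1)$), as the paper does; your argument proves exactly this at each step, so the induction closes, but the written hypothesis lags behind what you actually use.
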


\begin{proof}
Due to the structure of the equations and of the scheme we used, the proof will be separated into two parts corresponding to the first two iterations $n=0$ and $n=1$. These two parts being also divided into two sub-parts corresponding to the first stage $i=1$ and the others $i=2, \dots, s$. 
\paragraph{1. First iteration ($n=0$)} 
We first investigate the behaviour of the scheme after the first iteration. Indeed, for a general initial condition, we shall see that the previous scheme enables the projection of the numerical solution solution to the quasi-neutral space.
\paragraph{First stage $(i=1)$\\}
For the first stage for $\rho, q$ are prescribed as: 
\begin{eqnarray}
\rho^{(1)} &=& \rho^n - \Delta t a_{1,1} \nabla\cdot q^{(1)}\nonumber\\
\label{q1}
q^{(1)} &=& q^n + \Delta t a_{1,1}  \nabla \phi^{(1)}.  
\end{eqnarray}
Now manipulating as done before we get in the first stage for $\rho$ is updated via \eqref{update_rho_1st} as: 
\begin{equation}
\label{rho1}
\rho^{(1)} = 1 + \frac{\lambda^2}{\lambda^2+\Delta t^2a_{1,1}^2}(\rho^n -1 - \Delta ta_{1,1}\nabla\cdot q^n), 
\end{equation}
which ensures $\rho^{(1)}=1+{\cal O}(\lambda^2)$ for any (smooth) initial conditions. Hence, the Poisson equation \eqref{update_phi_ho} becomes 
\begin{equation}
\label{nb_phi}
\Delta \phi^{(1)} = \frac{1}{\lambda^2+\Delta t^2a_{1,1}^2}(\rho^n -1 - \Delta ta_{1,1} \nabla\cdot q^n), 
\end{equation}
which ensures that $\phi^{(1)}={\cal O}(1)$ for any initial conditions.  Taking the divergence of \eqref{q1}  and using 
the expression \eqref{nb_phi} for $\Delta \phi^{(1)}$ give  
$$
\nabla\cdot q^{(1)} = \nabla\cdot q^n + \Delta ta_{1,1} \Delta \phi^{(1)} = \nabla\cdot q^n +  \frac{\Delta ta_{1,1}}{\lambda^2+\Delta t^2a_{1,1}^2}(\rho^n -1 - \Delta ta_{1,1} \nabla\cdot q^n),   
$$
which gives  
\begin{equation}
\label{nb_q}
\nabla\cdot q^{(1)} 
= \Big(\frac{\lambda^2}{\lambda^2+\Delta t^2a_{1,1}^2}  \Big) \nabla\cdot q^n + \frac{\Delta ta_{1,1}}{\lambda^2+\Delta t^2a_{1,1}^2}(\rho^n-1) = {\cal O}(\lambda^2) +  \frac{\Delta t a_{1,1}}{\lambda^2+\Delta t^2a_{1,1}^2}(\rho^n-1).
\end{equation}
We can see the density is projected on the constant $1$ so that the potential is bounded, however, the divergence of the momentum is not projected to $0$ at the end of the first stage if we start with a general (non well-prepared) initial condition. 

In the sequel, we will see that these three constraints $\rho^{(i)}-1={\cal O}(\lambda^2)$, $\phi^{(i)}={\cal O}(1)$ and $\nabla\cdot q^{(i)}={\cal O}(1)$ are propagated through the stages of the first iteration. This will be proved by induction, the first step ($i=1$) being already shown to be true. 

\paragraph{Induction on the stages of the first iteration\\}
Let us assume for the stages $j=1, \dots, i-1$, we have 
\begin{equation}
\label{ia}
\rho^{(j)}-1={\cal O}(\lambda^2), \quad \phi^{(j)}={\cal O}(1),  \quad \nabla\cdot q^{(j)}={\cal O}(1), 
\end{equation}
and then we want to prove that this remains true for the stage $i$. 

Regarding the density update \eqref{update_rho_ho} and using \eqref{update_q_ho}, we get: 
$$
\rho^{(i)} = \widehat{\rho}^{(i)} - \Delta t {a}_{i,i} \nabla\cdot q^{(i)} = \widehat{\rho}^{(i)} - \Delta t {a}_{i,i} \nabla\cdot \widehat{q}^{(i)} - \frac{\Delta t^2 {a}^2_{i,i} }{\lambda^2} (\rho^{(i)}-1), 
$$
so that 
\begin{equation}
\label{nb_rho}
\rho^{(i)} -1= \frac{\lambda^2}{\lambda^2 + \Delta t^2 {a}^2_{i,i} } \Big( \widehat{\rho}^{(i)} -1 - \Delta t {a}_{i,i} \nabla\cdot \widehat{q}^{(i)}  \Big). 
\end{equation}
From the definitions \eqref{def_hat} of $\widehat{\rho}^{(i)}$ and $\widehat{q}^{(i)}$ combined with the induction assumptions \eqref{ia}, we conclude $\rho^{(i)}-1= {\cal O}(\lambda^2)$ and then the elliptic problem \eqref{update_phi_ho} for $\phi^{(i)}$ give $\phi^{(i)}= {\cal O}(1)$. Let us now focus on proving the last property $\nabla\cdot q^{(i)} = {\cal O}(1)$. From divergence expression \eqref{div_qi} and the induction assumptions \eqref{ia}, we easily deduce $\nabla\cdot q^{(i)} = {\cal O}(1)$. 
\paragraph{2. Second iteration ($n=1$)}
Now we consider the second iteration, note that from the first iteration, we have the numerical solution $(\rho^1, q^1, \phi^1)$, at time $t^1$ ($t^1 = t^0 + \Dlt$),  satisfying the following: 
\begin{equation}
\label{ia2_first}
\rho^1-1=\rho^{(s)}-1={\cal O}(\lambda^2), \quad  \phi^1=\phi^{(s)}={\cal O}(1),  \quad \nabla\cdot q^{1}=\nabla\cdot q^{(s)}={\cal O}(1).  
\end{equation}
\paragraph{First stage $(i=1)$\\}
Note that the expressions obtained for $\rho^{(1)}$ in \eqref{rho1} and $\phi^{(1)}$ in \eqref{nb_phi} are 
valid for any iteration $n \geq 0$. Therefore, we get after the first stage of the second iteration (corresponding to $n=1$) 
$$
\rho^{(1)} - 1 = {\cal O}(\lambda^2) \quad \mbox{and} \quad \phi^{(1)} = {\cal O}(1), 
$$
and similarly considering \eqref{nb_q}, the expression for divergence, which also remains valid for any $n \geq 0$, 
verifies  
$$
\nabla\cdot q^{(1)} = \mathcal{O} (\lambda^2) + \frac{\Delta t a_{1,1}}{\lambda^2+\Delta t^2a_{1,1}^2}(\rho^{n = 1} - 1) = \mathcal{O} (\lambda^2) + \frac{\Delta t a_{1,1}}{\lambda^2+\Delta t^2a_{1,1}^2}(\rho^{1} - 1). 
$$
Hence, since $\rho^{1}-1={\cal O}(\lambda^2)$, from the first iteration, as a result we have $\nabla\cdot q^{(1)} = {\cal O}(\lambda^2)$. 
To summarise, we finally obtain 
$$
\rho^{(1)} - 1 = {\cal O}(\lambda^2), \quad \phi^{(1)} = {\cal O}(1), \quad \mbox{and} \quad \nabla\cdot q^{(1)} = {\cal O}(\lambda^2).
$$

\paragraph{Induction on the stages of the second iteration\\}
Let assume for the stages $j=1, \dots, i-1$ we have 
\begin{equation}
\label{ia2}
\rho^{(j)} - 1 = {\cal O}(\lambda^2), \quad \phi^{(j)} = {\cal O}(1), \quad \nabla\cdot q^{(j)} = {\cal O}(\lambda^2), 
\end{equation}
and let us prove this remains true for the stage $i$. 

From the density update \eqref{nb_rho} we have that $\rho^{(i)}-1 = {\cal O}(\lambda^2)$ thanks to the definition \eqref{def_hat} of $\widehat{\rho}^{(i)}$ and $\widehat{q}^{(i)}$, combined with the induction assumption \eqref{ia2}. 
Using the result $\rho^{(i)}-1= {\cal O}(\lambda^2)$ in the Poisson equation \eqref{update_phi_ho} we obtain $\phi^{(i)}= {\cal O}(1)$. 

Let us now focus on the proving last property i.e. $\nabla\cdot q^{(i)} = {\cal O}(\lambda^2)$. From \eqref{div_qi} the divergence expression, and the induction assumptions \eqref{ia2}, we get 
\begin{eqnarray*}
\nabla\cdot q^{(i)} &=& \nabla \cdot \widehat{q}^{(i)} + \Delta t {a}_{i,i} \Delta \phi^{(i)} \nonumber\\
&=& \nabla \cdot \widehat{q}^{(i)} + \frac{\Delta t {a}_{i,i} }{\lambda^2}(\rho^{(i)}-1) \nonumber\\
&=& \nabla \cdot \widehat{q}^{(i)} + \frac{\Delta t {a}_{i,i} }{\lambda^2}\Big(\widehat{\rho}^{(i)}-1 - {a}_{i,i} \Delta t \nabla\cdot {q}^{(i)}\Big), \nonumber\\
\Big(1+\frac{{a}^2_{i,i}\Delta t^2}{\lambda^2}\Big) \nabla\cdot q^{(i)}  &=& \nabla \cdot \widehat{q}^{(i)} + \frac{\Delta t {a}_{i,i} }{\lambda^2}\Big(\widehat{\rho}^{(i)}-1 \Big). \nonumber
\end{eqnarray*}
Hence we have after using the definition \eqref{def_hat} of $\widehat{\rho}^{(i)}$
\begin{eqnarray*}
\nabla\cdot q^{(i)}  &=&  \frac{\lambda^2}{\lambda^2+{a}^2_{i,i}\Delta t^2} \Big[\nabla \cdot \widehat{q}^{(i)} + \frac{\Delta t {a}_{i,i} }{\lambda^2}(\widehat{\rho}^{(i)}-1) \Big]\nonumber\\
&=&  \frac{\lambda^2}{\lambda^2+{a}^2_{i,i}\Delta t^2} \nabla\cdot \widehat{q}^{(i)} + \frac{{a}_{i,i}\Delta t}{\lambda^2+{a}^2_{i,i}\Delta t^2} (\rho^n-1) - \frac{{a}_{i,i}\Delta t^2}{\lambda^2+{a}^2_{i,i}\Delta t^2} \Big(\sum_{j=1}^{i-1} a_{i,j} \nabla\cdot q^{(j)} \Big). \nonumber
\end{eqnarray*}
First, $\widehat{q}^{(i)}$ is uniformly bounded so does 
$\nabla \cdot \widehat{q}^{(i)}$. 
Second, from \eqref{ia2}, $\nabla\cdot q^{(j)}={\cal O}(\lambda^2)$ for 
$j=1, \dots, i-1$ and $\rho^n-1={\cal O}(\lambda^2)$ thanks to the 
first iteration (let recall $n=1$ here). Hence, 
we conclude the induction: $\nabla\cdot q^{(i)} ={\cal O}(\lambda^2)$. 
\end{proof}
\subsection{Recovering the asymptotic model}
An AP scheme needs to be able to transition from one models (non-asymptotic) to the other (asymptotic limit) seamlessly. In order to do that it is necessary that in the limit of the singular perturbation paramter, which in this case is $\lambda \to 0$ it boils down to a consistent discretisation for the limit system, which in this case the incompressible system \eqref{eq:ep_mass_lim}-\eqref{eq:ep_poi_lim_}. In this part, we check the penalised-IMEX scheme Algorithm~\eqref{algo_epqn}, enjoys the correct behaviour when $\lambda\to 0$, that is the limit of the scheme corresponds to a scheme of the limit model. 
\begin{proposition} 
\label{prop2}
For any initial conditions $\rho^0=\rho(t=0), q^0=q(t=0)$ with $\lambda^2\Delta \phi^0 = \rho^0-1$, as $\lambda\to 0$, the $s$-stage type-A IMEX scheme \eqref{update_rho_ho}-\eqref{update_phi_ho}-\eqref{update_q_ho_nohat} degenerates towards 
$$
u^{(i)} = u^n -\Delta t \sum_{j=1}^{i-1} \tilde{a}_{i,j}\nabla^2 : (u^{(j)}\otimes u^{(j)}) + \Delta t \sum_{j=1}^{i} {a}_{i,j}\nabla \phi^{(j)}, \;\;\; \mbox{and } \quad \nabla\cdot u^{(i)}=0 \quad \mbox{ for } i = 1, \ldots, s , 
$$
which is an $s$-stage IMEX time discretisation of the asymptotic model \eqref{eq:ep_mass_lim}-\eqref{eq:ep_poi_lim_}. 
\end{proposition}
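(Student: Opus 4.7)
My plan is to exploit Proposition~\ref{prop1} to pass to the limit $\lambda\to 0$ inside every term of the stage relations~\eqref{update_q_ho_nohat}--\eqref{update_phi_ho}, then collect the surviving terms to recognise them as an $s$-stage IMEX-RK step for the incompressible system \eqref{qn_equations1}--\eqref{qn_equations2}. Proposition~\ref{prop1} gives, already after the first iteration and for every stage~$j$ of the subsequent iterations, the three estimates $\rho^{(j)}-1=\mathcal{O}(\lambda^2)$, $\phi^{(j)}=\mathcal{O}(1)$ and $\nabla\cdot q^{(j)}=\mathcal{O}(\lambda^2)$, which is exactly the data required to carry out this limiting procedure cleanly.

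The core computation is to expand the momentum flux $F^{(j)}=q^{(j)}\otimes q^{(j)}/\rho^{(j)}+p(\rho^{(j)})I$ around $\rho^{(j)}=1$. Writing $\rho^{(j)}=1+\lambda^2\rho^{(j)}_{(2)}+\ldots$ and $u^{(j)}=q^{(j)}/\rho^{(j)}$, one finds $q^{(j)}\otimes q^{(j)}/\rho^{(j)}=u^{(j)}\otimes u^{(j)}+\mathcal{O}(\lambda^2)$ and $\nabla p(\rho^{(j)})=\mathcal{O}(\lambda^2)$ since $p$ is smooth and $\nabla\rho^{(j)}=\mathcal{O}(\lambda^2)$. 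The penalisation contribution $(\rho^{(j)}-1)\nabla\phi^{(j)}$ is the product of an $\mathcal{O}(\lambda^2)$ factor with the $\mathcal{O}(1)$ factor $\nabla\phi^{(j)}$ and therefore vanishes in the limit. Substituting these into \eqref{update_q_ho_nohat}, and using that $u^{(j)}-q^{(j)}=\mathcal{O}(\lambda^2)$, yields in the limit
\begin{equation*}
u^{(i)}=u^n-\Delta t\sum_{j=1}^{i-1}\tilde{a}_{i,j}\,\nabla\cdot(u^{(j)}\otimes u^{(j)})+\Delta t\sum_{j=1}^{i}a_{i,j}\,\nabla\phi^{(j)},
\end{equation*}
which is the explicit/implicit structure of an IMEX-RK stage applied to \eqref{qn_equations1}.

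For the divergence-free constraint, I would use the identity $\nabla\cdot q^{(i)}=\rho^{(i)}\nabla\cdot u^{(i)}+u^{(i)}\cdot\nabla\rho^{(i)}$. Proposition~\ref{prop1} gives $\nabla\cdot q^{(i)}=\mathcal{O}(\lambda^2)$, $\rho^{(i)}=1+\mathcal{O}(\lambda^2)$ and $\nabla\rho^{(i)}=\mathcal{O}(\lambda^2)$, so that $\nabla\cdot u^{(i)}=\mathcal{O}(\lambda^2)$ and hence vanishes in the limit. Combined with the momentum relation above, this closed system coincides with an $s$-stage IMEX discretisation of \eqref{qn_equations1}--\eqref{qn_equations2}, in which $\phi^{(i)}$ plays the role of the incompressible pressure and is determined by enforcing $\nabla\cdot u^{(i)}=0$ (taking the divergence of the momentum stage and using incompressibility recovers the elliptic equation $\Delta\phi^{(i)}=\nabla^2:(u^{(j)}\otimes u^{(j)})$ exactly as in the limit model).

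The main delicate point is \emph{not} the algebraic manipulations but the justification that the $\mathcal{O}(\lambda^2)$ remainders are truly uniform with respect to $\lambda$ so that products such as $(\rho^{(j)}-1)\nabla\phi^{(j)}$ really vanish in the limit. This requires invoking Proposition~\ref{prop1} for all stages $j<i$ simultaneously: since Proposition~\ref{prop1} was itself proved by induction on the stages, there is no circularity, but one has to keep careful track of which iterations the bounds apply to — in particular the conclusion of Proposition~\ref{prop2} is meaningful for iterations $n\geq 1$, as the first iteration ($n=0$) is only a projection step in which $\nabla\cdot q^{(j)}$ is not yet $\mathcal{O}(\lambda^2)$. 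Once these uniformity considerations are settled the consistency statement follows by direct inspection of the limit relations.
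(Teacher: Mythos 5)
Your proposal follows essentially the same route as the paper's own proof: both invoke Proposition~\ref{prop1} to establish $\rho^{(j)}-1=\mathcal{O}(\lambda^2)$, $\phi^{(j)}=\mathcal{O}(1)$, $\nabla\cdot q^{(j)}=\mathcal{O}(\lambda^2)$, expand the flux $F^{(j)}$ about $\rho^{(j)}=1$ so that $F^{(j)}\to u^{(j)}\otimes u^{(j)}+I$ and the penalisation term $(\rho^{(j)}-1)\nabla\phi^{(j)}$ drops out, substitute into the momentum stage \eqref{update_q_ho_nohat}, and finally take the divergence of that stage to recover $\Delta\phi^{(i)}=\nabla^2:(u^{(j)}\otimes u^{(j)})$ together with $\nabla\cdot u^{(i)}=0$. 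Your closing remark — that the divergence-free stage property is only valid from the second iteration onward, the first iteration ($n=0$) being a projection step during which $\nabla\cdot q^{(j)}=\mathcal{O}(1)$ only — is a caveat the paper leaves implicit but is consistent with the statement and proof of Proposition~\ref{prop1}, and it is a useful clarification of the scope of Proposition~\ref{prop2}.
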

\begin{proof}
From Proposition \ref{prop1}, $\rho^{(i)}-1={\cal O}(\lambda^2)$ (which is true for $i\geq 1$), we get $q^{(i)} = \rho^{(i)} u^{(i)} = u^{(i)}+{\cal O}(\lambda^2)$ and the flux $F^{(i)}=F(\rho^{(i)}, q^{(i)}) = 
\rho^{(i)} u^{(i)} \otimes u^{(i)} + p^{(i)} I$ degenerates towards  $u^{(i)} \otimes u^{(i)} + I$. Let us now derive the asymptotic scheme for updating $u$.  
Using \eqref{update_q_ho} we get: 
\begin{eqnarray}
u^{(i)} &=& u^n - \Delta t \sum_{j=1}^{i-1} \Big[ \tilde{a}_{i,j}  \nabla\cdot (u^{(j)}\otimes u^{(j)})  - {a}_{i,j}\nabla\phi^{(j)}  \Big]  + \Delta t {a}_{i,i} \nabla\phi^{(i)} + {\cal O}(\lambda^2)\nonumber\\
\label{scheme_ap}
&=& u^n - \Delta t \sum_{j=1}^{i-1} \tilde{a}_{i,j}  \nabla\cdot (u^{(j)}\otimes u^{(j)}) + \Delta t \sum_{j=1}^{i}  {a}_{i,j}\nabla\phi^{(j)}+ {\cal O}(\lambda^2), 
\end{eqnarray}
which is an IMEX scheme for the asymptotic QN model \eqref{eq:ep_mom_lim} on $u$. \\ 

Now, let us determine the equation on the electric potential $\phi$. 
To do so, let us start by considering the divergence of the momentum equation \eqref{update_q_ho_nohat} in combination with Proposition~\ref{prop1} to get 
\begin{eqnarray}
- \Delta t a_{i,i} \Delta \phi^{(i)} &=& \nabla\cdot q^n - \Delta t \sum_{j=1}^{i-1} \Big[ \tilde{a}_{i,j} \Big(\nabla^2 : F^{(j)} - \nabla\cdot ( (\rho^{(j)} -1)\nabla\phi^{(j)}) \Big) - {a}_{i,j}\Delta \phi^{(j)}  \Big]+ {\cal O}(\lambda^2)\nonumber\\
\label{proof_phi}
&=& -\Delta t \sum_{j=1}^{i-1}\tilde{a}_{i,j} \nabla^2 : (u^{(j)}\otimes u^{(j)}) + \Delta t \sum_{j=1}^{i-1}{a}_{i,j}\Delta \phi^{(j)} +  {\cal O}(\lambda^2). 
\end{eqnarray}
Then, we proved the asymptotic scheme \eqref{scheme_ap}-\eqref{proof_phi} is a $s$-stage IMEX scheme for the asymptotic model \eqref{eq:ep_mass_lim}-\eqref{eq:ep_mom_lim}. 

\end{proof}

\begin{remark}
    Note that for $i=1$, the expression \eqref{proof_phi} becomes 
$$
\Delta \phi^{(1)} = {\cal O}(\lambda^2), 
$$ 
but for the next stages, $\phi^{(j)}= {\cal O}(1)$, 
as one can see for $i=2$, 
$$
a_{2,2} \Delta \phi^{(2)} = \tilde{a}_{2,2} \nabla^2 : (u^{(1)}\otimes u^{(1)})+{\cal O}(\lambda^2). 
$$
\end{remark}

\subsection{Analysis of the CK IMEX schemes}
In this part, we quickly discuss how type-CK IMEX schemes 
behave in the QN limit for EP system. The only difference with 
type-A scheme lies in the first stage $i=1$ for which type-CK 
IMEX schemes give 
$$
\rho^{(1)} = \rho^n, \qquad q^{(1)} = q^n, \qquad \lambda^2\Delta \phi^{(1)} = \rho^n-1. 
$$
We immediately see the difference with respect to the type-A schemes,
for which the first stage only considers the implicit part and is thus able to ensure the density at the first stage satisfies $\rho^{(1)}-1={\cal O}(\lambda^2)$, 
which in turn ensures for the potential $\phi^{(1)} ={\cal O}(1)$. However, for type-CK, we have $\phi^{(1)} = \phi^n = {\cal O}(1/\lambda^2)$ (if the initial data is not well-prepared) which tends to infinity as $\lambda\to 0$.
\begin{proposition}
\label{prop3}
\begin{itemize}
\item well-prepared initial data definition: $\rho^0-1={\cal O}(\lambda^2)=\nabla\cdot q^0, \phi^0 = {\cal O}(1)$. The $s$-stage CK-IMEX scheme \eqref{update_rho_ho}-\eqref{update_phi_ho}-\eqref{update_q_ho_nohat} degenerates towards a $s$-stage CK-IMEX scheme for the asymptotic model \eqref{eq:ep_mass_lim}-\eqref{eq:ep_mom_lim}.   
\item general initial data definition. The $s$-stage CK-IMEX scheme \eqref{update_rho_ho}-\eqref{update_q_ho}-\eqref{update_phi_ho} satisfies: 
$\phi^{(i)} = {\cal O}(1/\lambda^2).$
\end{itemize}
\end{proposition}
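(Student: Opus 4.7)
The plan is to treat the two bullets separately, with the key structural observation being that for CK schemes $a_{1,1}=0$, so the first stage reduces to $\rho^{(1)}=\rho^n$, $q^{(1)}=q^n$, $\lambda^2\Delta\phi^{(1)}=\rho^n-1$, whereas the remaining block of stages ($i\geq 2$) is governed by the invertible submatrix $A_{s-1}$ and therefore mimics the type-A analysis of Propositions~\ref{prop1} and \ref{prop2}.

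For the well-prepared case, I would first use the hypotheses $\rho^0-1={\cal O}(\lambda^2)$, $\nabla\cdot q^0={\cal O}(\lambda^2)$ and $\phi^0={\cal O}(1)$ directly: at stage $i=1$ we trivially have $\rho^{(1)}-1={\cal O}(\lambda^2)$, $\nabla\cdot q^{(1)}={\cal O}(\lambda^2)$, and $\phi^{(1)}={\cal O}(1)$ from the Poisson equation. Then for $i=2,\dots,s$, I would rerun the induction of Proposition~\ref{prop2} (second iteration, induction on stages), where the invertibility of $A_{s-1}$ makes the penalisation active so that the update \eqref{update_rho_ho_comp} projects $\rho^{(i)}$ onto $1+{\cal O}(\lambda^2)$, and the divergence computation of $q^{(i)}$ gives ${\cal O}(\lambda^2)$ as before. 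Passing to the limit in \eqref{update_q_ho_nohat} and taking the divergence yields the same IMEX discretisation of \eqref{eq:ep_mass_lim}-\eqref{eq:ep_mom_lim} as in Proposition~\ref{prop2}, with the only difference being that the first stage is trivial.

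For the general initial data case, the obstruction is explicit: since $a_{1,1}=0$, no penalisation acts at $i=1$, hence $\rho^{(1)}-1=\rho^n-1$ is ${\cal O}(1)$ in general, and the Poisson equation $\lambda^2\Delta\phi^{(1)}=\rho^n-1$ forces $\phi^{(1)}={\cal O}(1/\lambda^2)$. I would then propagate this to $i\geq 2$ by inserting this size estimate into the definition \eqref{def_hat} of $\widehat{q}^{(i)}$, where the $a_{i,j}\nabla\phi^{(j)}$ terms carry a factor $1/\lambda^2$, so that $\nabla\cdot\widehat{q}^{(i)}={\cal O}(1/\lambda^2)$. Substituting into \eqref{update_rho_ho_comp} gives $\rho^{(i)}-1=\frac{\lambda^2}{\lambda^2+\Delta t^2 a_{i,i}^2}\cdot {\cal O}(1/\lambda^2)={\cal O}(1)$, and the Poisson equation at stage $i$ then yields $\phi^{(i)}={\cal O}(1/\lambda^2)$. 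A short induction on $i$ suffices, using the recursive structure of $\widehat{\rho}^{(i)}$ and $\widehat{q}^{(i)}$.

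The main technical obstacle is the bookkeeping in the second part: one must verify that the ${\cal O}(1/\lambda^2)$ contribution from $\nabla\phi^{(1)}$ is not accidentally cancelled by some combination of coefficients at later stages (e.g.\ by the prefactor $\lambda^2/(\lambda^2+\Delta t^2 a_{i,i}^2)$ hitting a quantity that is itself ${\cal O}(\lambda^2)$). The cleanest route is to separate in $\widehat{q}^{(i)}$ the genuinely singular contribution $\Delta t\, a_{i,1}\nabla\phi^{(1)}$ from the ${\cal O}(1)$ remainder, and track it linearly through the stage recursion; the prefactor analysis then shows that the $1/\lambda^2$ blow-up is preserved rather than damped. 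With this, both items of the proposition follow.
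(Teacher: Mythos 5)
Your proposal follows the same strategy as the paper's (very terse) proof: for the well-prepared case, rerun the stage-by-stage argument of Proposition~\ref{prop2} with the trivial first stage handled by the hypotheses; for the general case, note that $a_{1,1}=0$ forces $\rho^{(1)}-1=\rho^n-1={\cal O}(1)$, hence $\phi^{(1)}={\cal O}(1/\lambda^2)$ from the Poisson equation, and propagate by induction using \eqref{update_rho_ho}. You supply noticeably more bookkeeping detail than the paper (which only says ``by induction, we can prove these two estimates are propagated''), and your caution about potential coefficient cancellations masking the $1/\lambda^2$ blow-up is a fair and useful point that the paper glosses over, but the underlying idea and chain of estimates are the same.
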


    \begin{proof}
    For the well-prepared case, we follow the lines of the proof of Proposition \ref{prop2}. 

For the general initial condition case, we have $\rho^{(1)}-1 = {\cal O}(1)$ so  we immediately 
    have $\phi^{(1)} = {\cal O}(1/\lambda^2)$ from the Poisson equation. By induction, we can prove these  
    two estimates are propagated along the stages using 
    \eqref{update_rho_ho}.


  \end{proof}

\section{Space Discretisation}
In this section we describe the spatial discretisation strategy which is coupled with the time semi-discrete penalised-IMEX scheme presented in Section~\ref{ssec:highopimex} to obtain a space-time fully discrete numerical scheme. To do this, we follow a strategy as was done in \cite{ACS24}, where, a second order fully discrete scheme for the EP system was developed using a combination of Rusanov fluxes and simple central differencing. Higher order spatial discretisation can be realised by combining high order shock-capturing schemes with the WENO reconstruction and high order central differences, following the footsteps of \cite{BQR19}, where the authors have developed higher order IMEX schemes for the compressible Euler equations in the low Mach number limit. The endeavour behind the design is to be able to obtain a fully discrete scheme which is compatible enough to showcase the properties of the time-semi discrete penalised IMEX scheme presented in Section~\ref{ssec:highopimex}. 

In order to present the fully-discrete scheme, we introduce the
vector $i = (i_1, i_2, i_3)$ where each $i_m$ for $m = 1,2,3 $
represents the space mesh  in direction $m$: $x_m=i_m\Delta x_m$ with $\Delta x_m=L_m/N_m$, $x_m\in [0, L_m]$ and $N_m$ the number of points. We further introduce the
following finite difference and averaging operators: e.g.\ in the 
$x_m$-direction  
\begin{equation}
  \label{eq:ep_deltax1}
    \delta_{x_m}w_{k}=w_{k+\frac{1}{2}e_m}-w_{k-\frac{1}{2}e_m},
  \quad
  \mu_{x_m}w_{k}=\frac{w_{k+\frac{1}{2}e_m}+w_{k-\frac{1}{2}e_m}}{2},
\end{equation}
for any grid function $w_{k}$. From the semi-discrete scheme \eqref{def_hat}, \eqref{update_rho_ho_comp}
\begin{definition}
\label{def:fully_disc_schm}
    The $i^{th}$ stage of the $s$-stage fully discrete penalised-IMEX scheme for the EP system is defined as follows. The explicit variables are computed first as:
\begin{align}
\hspace{-0.6cm} \widehat{\rho}^{(i)}_{k} &= \rho^n_k - \sum_{j = 1}^{i-1} a_{i, j} \sum_{m = 1}^d \nu_m \delta_{x_m} \mathcal{G}_{m, k}^{(i)}, \\
\hspace{-0.6cm}    \widehat{q}^{(i)}_{k} &= q^n_k - \sum_{j = 1}^{i-1} \tilde{a}_{i, j} \left(\sum_{m = 1}^d \nu_m \delta_{x_m} \mathcal{F}_{m, k}^{(j)} 
    +  (\rho^{(j)}_k - 1)  \nu_m \delta_{x_m} \mu_{x_m} \phi_{k}^{(j)} e_m \right) 
    + \sum_{j = 1}^{i-1} a_{i, j} \nu_m \delta_{x_m} \mu_{x_m} \phi_{k}^{(j)} e_m
\end{align}
Following, the above update $\rho^{(i)}_k$ is updated as:
\begin{equation}
     \rho_{k}^{(i)} = 1 + \frac{\lambda^2}{\lambda^2 + \Dlt^2 a_{i,i}^2} \left( \widehat{\rho}^{(i)}_{k} - 1 - \sum_{m = 1}^d \nu_m \mu_{x_m} \delta_{x_m} \widehat{q}_{m,k}^{(i)} \right) 
\end{equation}
Then the implicit solution is obtained by first solving the discrete elliptic problem which is a linear system for $\{\phi^{(k)}_i \}$ given by,
\begin{equation}
        \sum_{m=1}^d  \frac{\delta_{x_m}}{\Delta x_m} \left(\frac{\delta_{x_m}}{\Delta x_m} \phi^{(i)}_{k}\right) =
    \rho_k^{(i)}- 1,
    \label{eq:phi_fd} 
\end{equation}
Followed by the explicit evaluations,
\begin{equation}
    q^{(i)}_{m, k} = \widehat{q}^{(i)}_{m,k} - \Dlt a_{i,i} \nu_m \delta_{x_m} \phi_k^{(i)}, \quad m = 1, 2, \dots, d.  
\end{equation}
Here, the repeated index $m$ takes values in $\{ 1, 2, \dots, d \}$, $\nu_m := \frac{\Dlt}{\Delta x_m}$ denote the mesh ratios and the vector $(\mathcal{G}_m)$ is an approximation of the mass flux $q$, $\mathcal{F}_m$ is an approximation of the $m^{th}$ element of the flux $F$ in \eqref{eq:notation_ref} and the are defined as: 
\begin{equation}
    \begin{aligned}
        \mathcal{G}^{(j)}_{m, k + \frac{1}{2} e_m} &= \frac{1}{2} (q^{(j)}_{m,k + e_m} + q^{(j)}_{m, k}) \\
        \mathcal{F}^{(j)}_{m, k + \frac{1}{2} e_m} &=  \frac{1}{2} \left( F_{m} (U^{(j), +}_{k + \frac{1}{2} e_m}) + F_{m} (U^{(j), -}_{k + \frac{1}{2} e_m})\right) - \frac{\alpha^{(j)}_{m, k + \frac{1}{2} e_m}}{2} \left(q_{m, k + \frac{1}{2} e_m}^{(j), +} - q_{m, k + \frac{1}{2} e_m}^{(j), -} \right),   
    \end{aligned}
\end{equation}
where $F_m$ denotes the $m^{th}$ element of the flux $F=(F_1, \dots, F_m)$. 
Here, for any conservative variable $w$, we have denoted by
$w^\pm_{i+\frac{1}{2} e_m}$, the interpolated states obtained
using the piecewise linear reconstructions. The wave-speeds are 
computed as follows 
\begin{equation}
  \label{eq:wave_speed1}
  \alpha^{(j)}_{m, k
    +\frac{1}{2}e_m}:=2\max\left(\left|\frac{q_{m,k+\frac{1}{2}e_m}^{(j),-}}{\rho_{k+\frac{1}{2}e_m}^{(j),-}}\right|,
    \left|\frac{q_{m,k+\frac{1}{2}e_m}^{(j),+}}{\rho_{k+\frac{1}{2}e_m}^{(j),+}}\right|\right). 
\end{equation}
\end{definition}
The momentum flux $F$ is approximated by a Rusanov-type
flux, and the the mass flux is approximated using simple
central differences. Hence, the eigenvalues of the Jacobians of the part of the flux which is approximated by Rusanov-type flux can be obtained as $\lambda_{m,1}=0, \lambda_{m,2}= \lambda_{m,3}=\frac{q_m}{\rho}$ and $\lambda_{m,4}= 2 \frac{q_m}{\rho}$ and the CFL condition is given by the timestep restriction on $\Dlt$ at time $t^n$: 
\begin{equation}
\label{eq:ep_CFL}
\Dlt\max_{k}\max_m\left(\frac{\left|\lambda_{m,3,k}\right|}{\Delta x_m}, \frac{\left|\lambda_{m,4,k}\right|}{\Delta x_m}\right)=\nu,
\end{equation} 
where $\nu<1$ is the given CFL number. This CFL condition is a crude estimate, for more rigorous results one may refer to \cite{GP16}.
\begin{remark}
    The classical scheme in \cite{CDV07} and its higher order extension presented in \cite{ACS24} are both based on the ARK scheme utilised here as well. These schemes fail to AP by not only by deviating from the asymptotic limit solution for $\lambda \to 0$ but also suffer through stiff stability condition \cite{Fab92} as:
    \begin{equation*}
        \Dlt \leq \lambda.
    \end{equation*}
    Whereas, the penalised-IMEX schemes developed in this paper don't get restricted with the above terminal stability restriction owing to the addition of the penalisation term to the momentum equation and then identifying the implicit and explicit terms for the ARS scheme to be applied.
\end{remark}

\section{Numerical results} 
In this section we present the results of various numerical test cases carried out in order to not only corroborate the theoretically assertion from the previous sections but also to draw inferences to analyse the penalised-IMEX scheme even further. We consider the following two (Figure~\ref{fig:ep_imexDP} and) butcher tableaux of type-A ARS IMEX-RK scheme from \cite{dimarco_imexrk} and one type-CK scheme (Figure~\ref{fig:ep_imexARS}) from \cite{PR01}.
\begin{figure}[htbp]
  \small
  \centering
  \begin{tabular}{c|c c c c}
    0	  & 0		& 0	   & 0	    & 0\\
    $1/3$ & 0		& 0	   & 0	    & 0\\
    1     & 1		& 0    & 0	    & 0\\
    1     & $1/2$   & 0    & $1/2$  & 0\\ 
    \hline 
          & $1/2$   & 0    & $1/2$  & 0
  \end{tabular}
  \hspace{10pt}
  \begin{tabular}{c|c c c c}
    $1/2$	& $1/2$	 & 0	  & 0	  & 0  \\
    $2/3$	& $1/6$	 & $1/2$  & 0	  & 0  \\
    $1/2$	& $-1/2$ & $1/2$  & $1/2$ & 0  \\
    $1$	    & $3/2$  & $-3/2$ & $1/2$ & $1/2$  \\
    \hline 
            & $3/2$  & $-3/2$ & $1/2$ & $1/2$
  \end{tabular} \\
  \vspace{10pt}
\hspace{2pt}
\begin{tabular}{c|c c c c}
    0	  & 0		& 0	    & 0	     & 0\\
    0     & 0		& 0	    & 0	     & 0\\
    1     & 0		& 1     & 0	     & 0\\
    1     & 0       & $1/2$ & $1/2$  & 0\\ 
    \hline 
    0     & 0       & $1/2$ & $1/2$  & 0\\ 
  \end{tabular}
  \hspace{12pt}
  \begin{tabular}{c|c c c c}
    $\gamma$ & $\gamma$  & 0	         & 0	          & 0  \\
    0	     & $-\gamma$ & $\gamma$      & 0	          & 0  \\
    $1$	     & 0         & $1 - \gamma$  & $\gamma$       & 0  \\
    $1$	     & 0         & $1/2$         & $1/2 - \gamma$ & $\gamma$  \\
    \hline 
             & 0         & $1/2$         & $1/2 - \gamma$ & $\gamma$ 
  \end{tabular}
  \caption{Double Butcher tableaux of type-A Additive IMEX schemes. Top: DP1-A(2, 4, 2)and Bottom: DP2-A(2, 4, 2).}
  \label{fig:ep_imexDP}
\end{figure}
\begin{figure}[htbp]
  \centering
  \begin{tabular}{c|c c c}
    0		& 0			& 0			& 0	\\
    $\gamma$	& $\gamma$		& 0 			& 0	\\
    $1$	& $\delta$		& $1 - \delta$		& 0	\\
    \hline 
                & $\delta$	& $1 - \delta$	& $0$
  \end{tabular}
  \hspace{10pt}
  \begin{tabular}{c|c c c}
    $0$	& $0$		& 0			& 0		\\
    $\gamma$	& $0$		& $\gamma$ 		& 0		\\
    $1$	& $0$		& $1 - \gamma$		& $\gamma$	\\
    \hline 
        & $0$		& $1 - \gamma$		& $\gamma$
  \end{tabular}
  \caption{Double Butcher tableaux of type-CK Additive IMEX schemes.ARS
    (2,2,2). Here, $\gamma=1-\sqrt{2}/2$,
    $\sigma = 1 / 2 \gamma$ and $\delta=1 - \sigma $.}    
  \label{fig:ep_imexARS}
\end{figure}
\subsection{A Small Perturbation of a Uniform QN Plasma: One-dimensional}
\label{sec:ep_numer_case_stud_P1}
A quasineutral (QN) state given of comprising of a constant density and constant velocity is considered:
\begin{equation}
\label{qn_state}
\bar{\rho}(x_1)  = 1, \qquad \bar{u}(x_1) = 1 \qquad \bar{\phi}(x_1) = 0
\end{equation} 
The constant value of density combined with the Poisson equation implies that the electric potential vanishes. This QN state is perturbed with, by adding a small (to be specified) cosine perturbation of magnitude $\delta^2$ to the uniform velocity field; see \cite{CDV07}. All the computations for this test case are carried out in the spatial domain $[0,1]$ with periodic boundary conditions for the hydrodynamic variables $\rho$ and $q$ and homogeneous Dirichlet boundaries for the electric potential $\phi$.

The aim of this case study is to test the AP scheme's ability to recover a QN background state independent of the space mesh-size: resolving or not resolving the parameter $\lambda$. To this end we consider two types of perturbations being added to the QN state \eqref{qn_state}, leading to two types of initial data:
\begin{itemize}
    \item Case 1: well-prepared initial conditions; see Definition~\ref{def:wp_data}.  
    \item Case 2: non-well-prepared initial conditions.
\end{itemize}
\paragraph{Case 1\\} To obtain a well-prepared initial data the velocity perturbation is equal to $\delta^2=\lambda^2$ (according to Definition \ref{def:wp_data}) so that the initial condition reads:
\begin{equation}\label{eq:pert_qn_wp}
  \rho(0, x_1) = \bar{\rho}(x_1), \qquad u_1 (0, x_1) = \bar{u}_1(x_1) + \delta^2 \cos (2 K \pi x_1) \qquad \phi (0, x_1) = \bar{\phi}(x_1), 
\end{equation}
where $\bar{\rho}, \bar{u}, \bar{\phi}$ are given by \eqref{qn_state}. 
For all the plots in this subsection the $y$-axis is in log scale.  Figure~\ref{fig:Small_pertur_QN_r_m1} shows the space dependency of the density perturbation ($|\rho(t, \cdot) - 1|$), velocity divergence ($|\dvg u(t, \cdot)|$) and the absolute value of the electric potential ($|\phi(t, \cdot)|$) at the final time $t=0.1$ for the DP2A1, and DP2A2 penalised-IMEX schemes; see Definition~\ref{def:fully_disc_schm} and the LSDIRK scheme of SI-IMEX-RK-DAE scheme from \cite{ACS24}. The computation was carried out on a resolved mesh with $N = 10^4$ (the number of points, $\Delta x=1/N$), at a CFL $= 0.45$ and for the scaled Debye length $\lambda = 10^{-4}$. 
\begin{figure}[htbp]
  \centering
  \includegraphics[height=0.16\textheight]{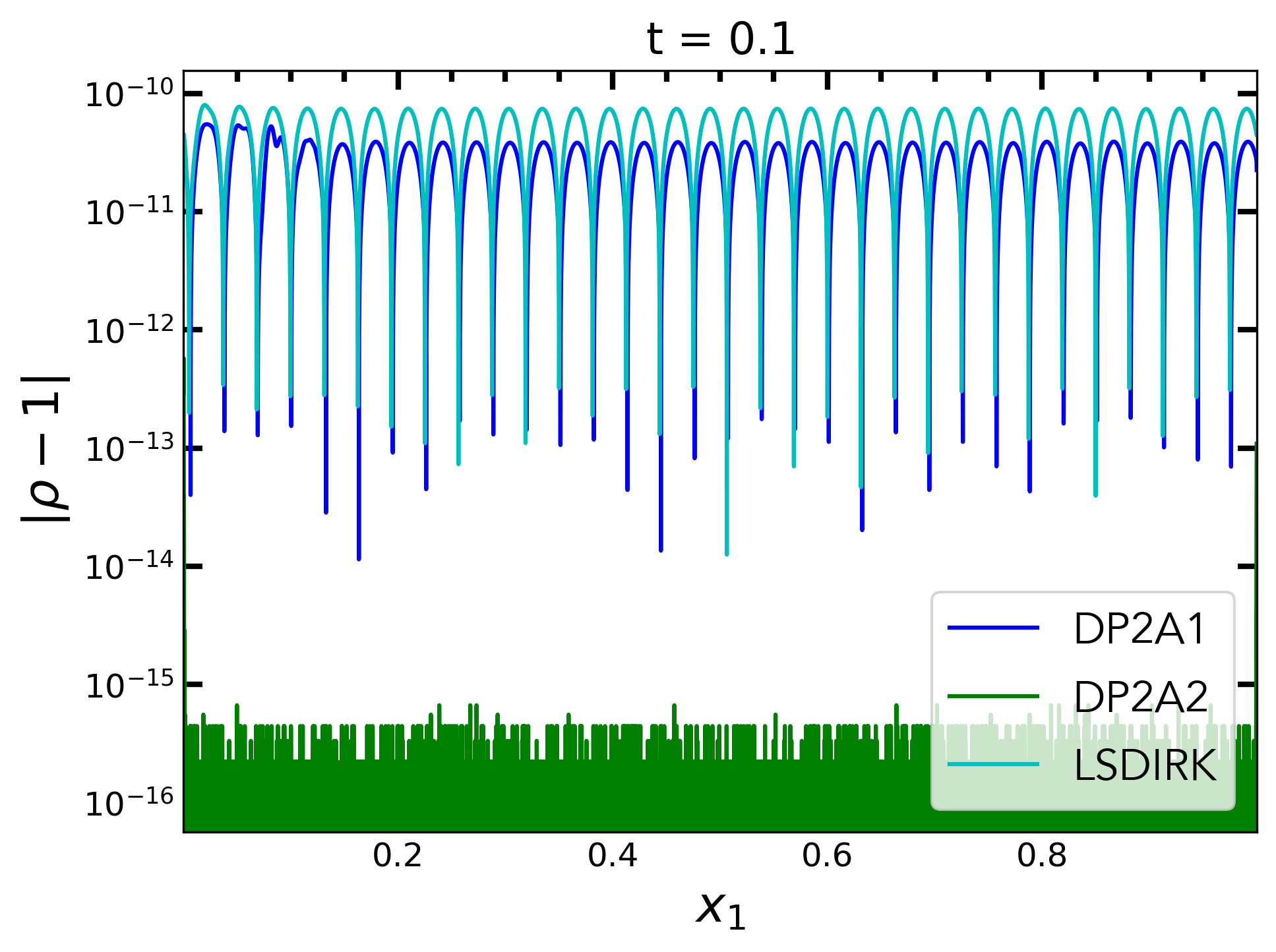}
  \includegraphics[height=0.16\textheight]{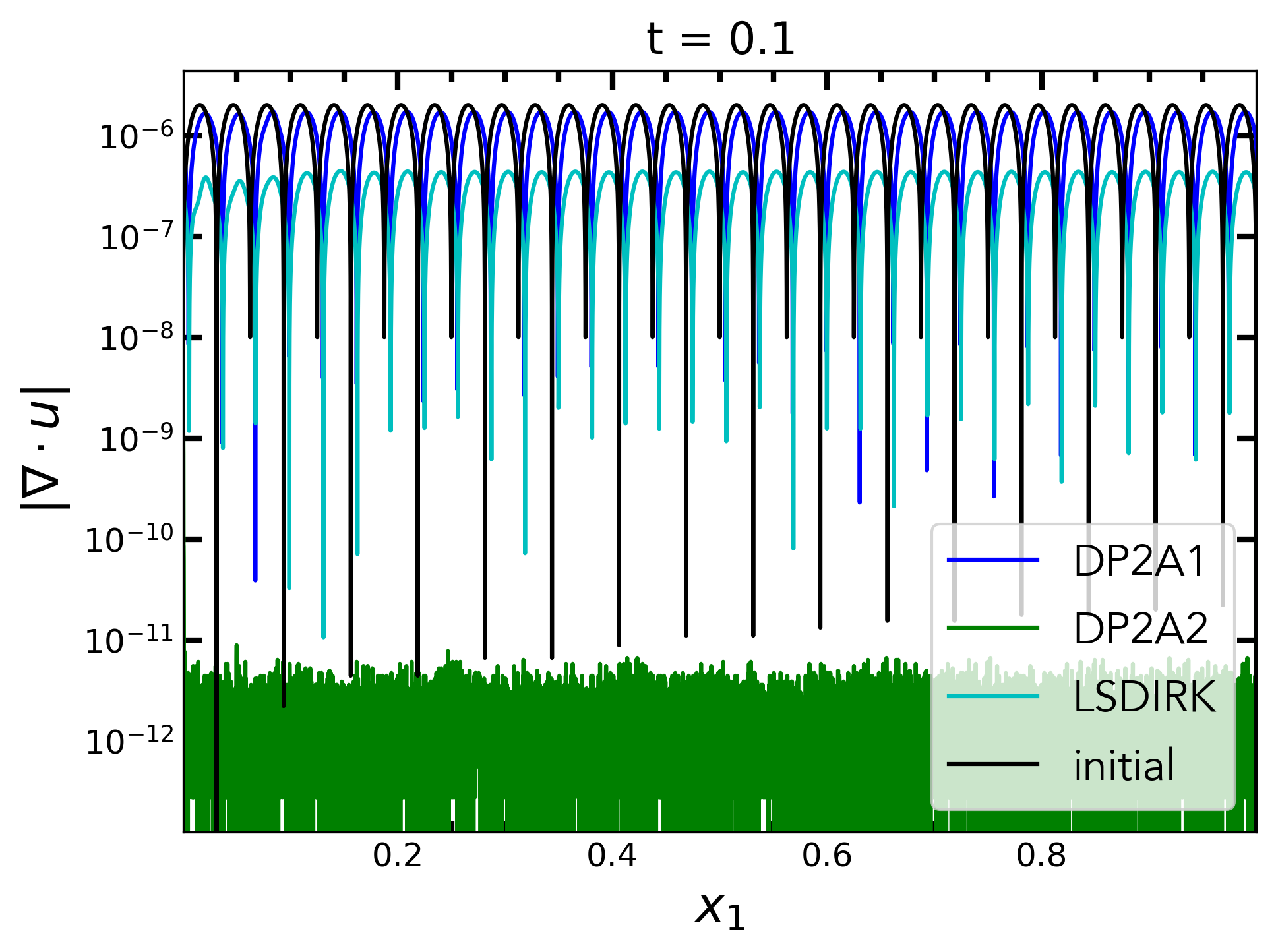}
  \includegraphics[height=0.16\textheight]{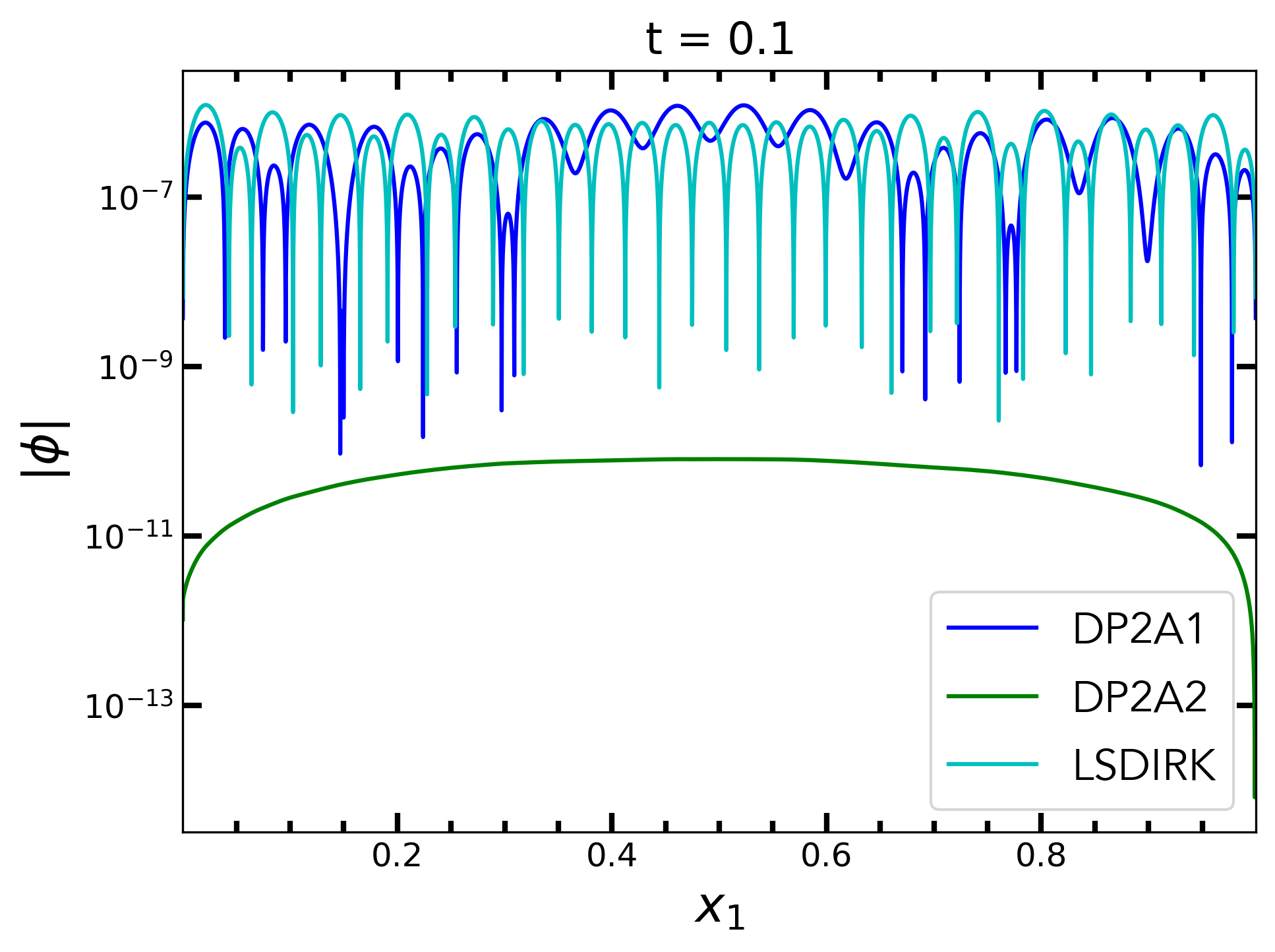}
  \caption{Case 1, resolved mesh ($N=10^4$) 
  with Penalised type-A IMEX and SI-IMEX Schemes. $\lambda = 10^{-4}$. Left: $x_1\to |\rho(t, x_1)-1|$, Center:
    $x_1\to |\dvg u(t, x_1)|$ , Right: $x_1\to |\phi(t, x_1)|$, at time $t
    = 10^{-1}$, $K = 16$.  } 
\label{fig:Small_pertur_QN_r_m1}
\end{figure}
The long time behaviour of all the penalised IMEX schemes on a fine mesh in Figure~\ref{fig:Small_pertur_QN_r_m1} showcases the ability of the penalised IMEX schemes to maintain the well-preparedness of the data for large times. We have plotted the results of the SI-IMEX-RK-DAE scheme to show that the penalised IMEX schemes work at par with them. 
In Figure~\ref{fig:Small_pertur_QN_ur_m1}, the same entities as of Figure~\ref{fig:Small_pertur_QN_r_m1} are plotted with the difference that the computation was carried out on a under resolved mesh with $N = 10^2$ ($\Delta x=1/N$), at a CFL $= 0.45$, for the same scaled Debye length $\lambda = 10^{-4}$. 
\begin{figure}[htbp]
  \centering
  \includegraphics[height=0.162\textheight]{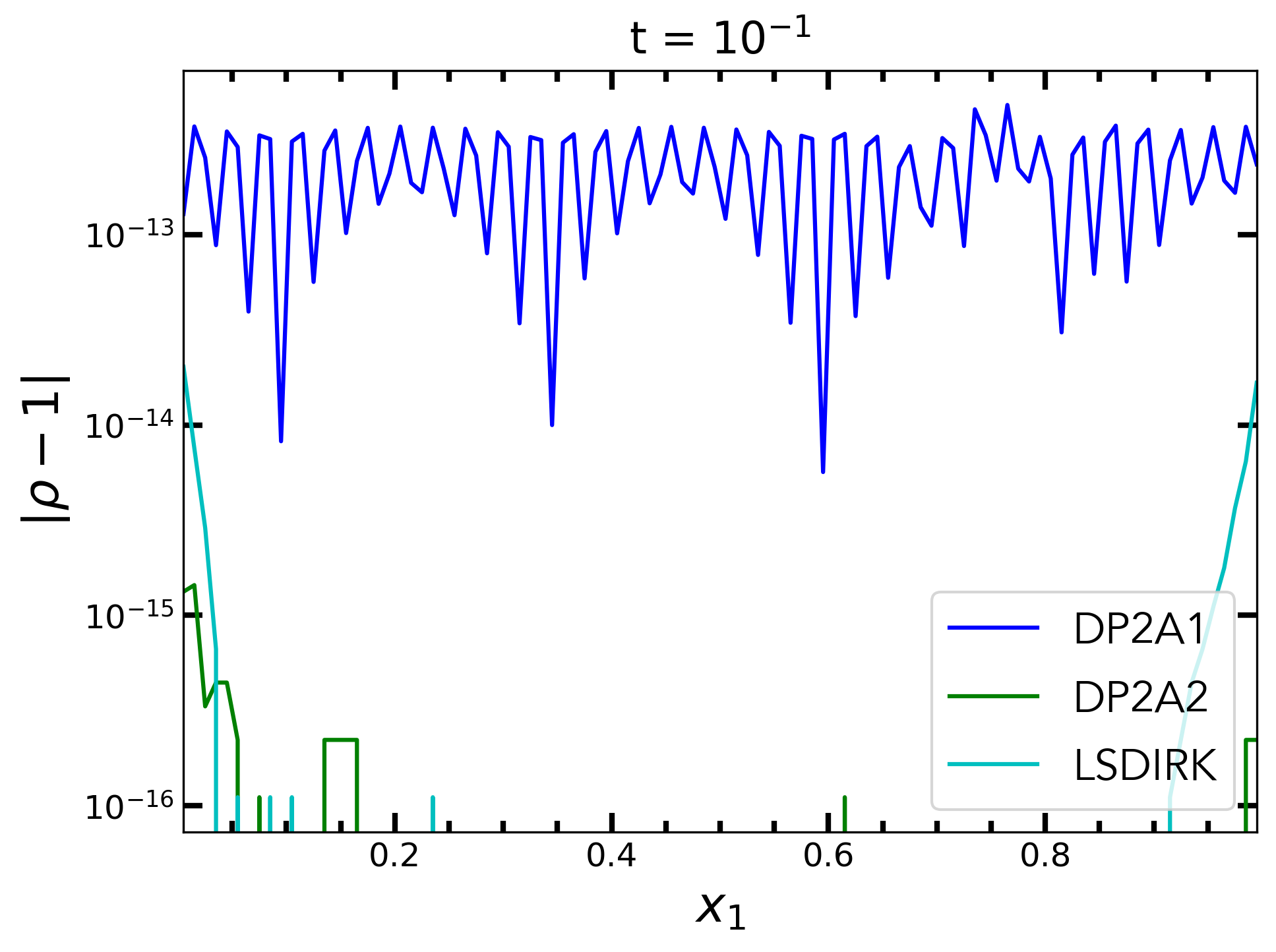}
  \includegraphics[height=0.162\textheight]{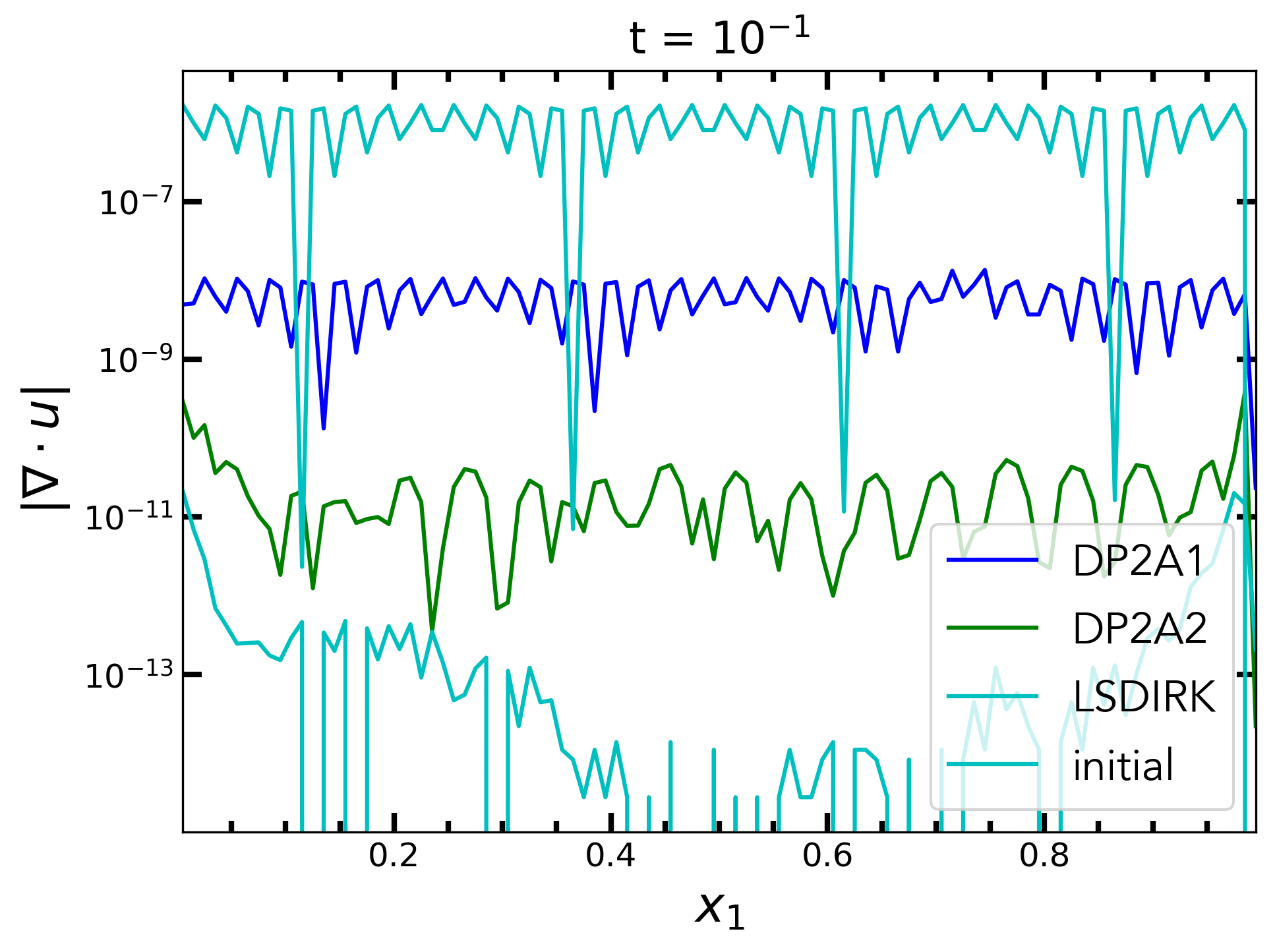}
  \includegraphics[height=0.162\textheight]{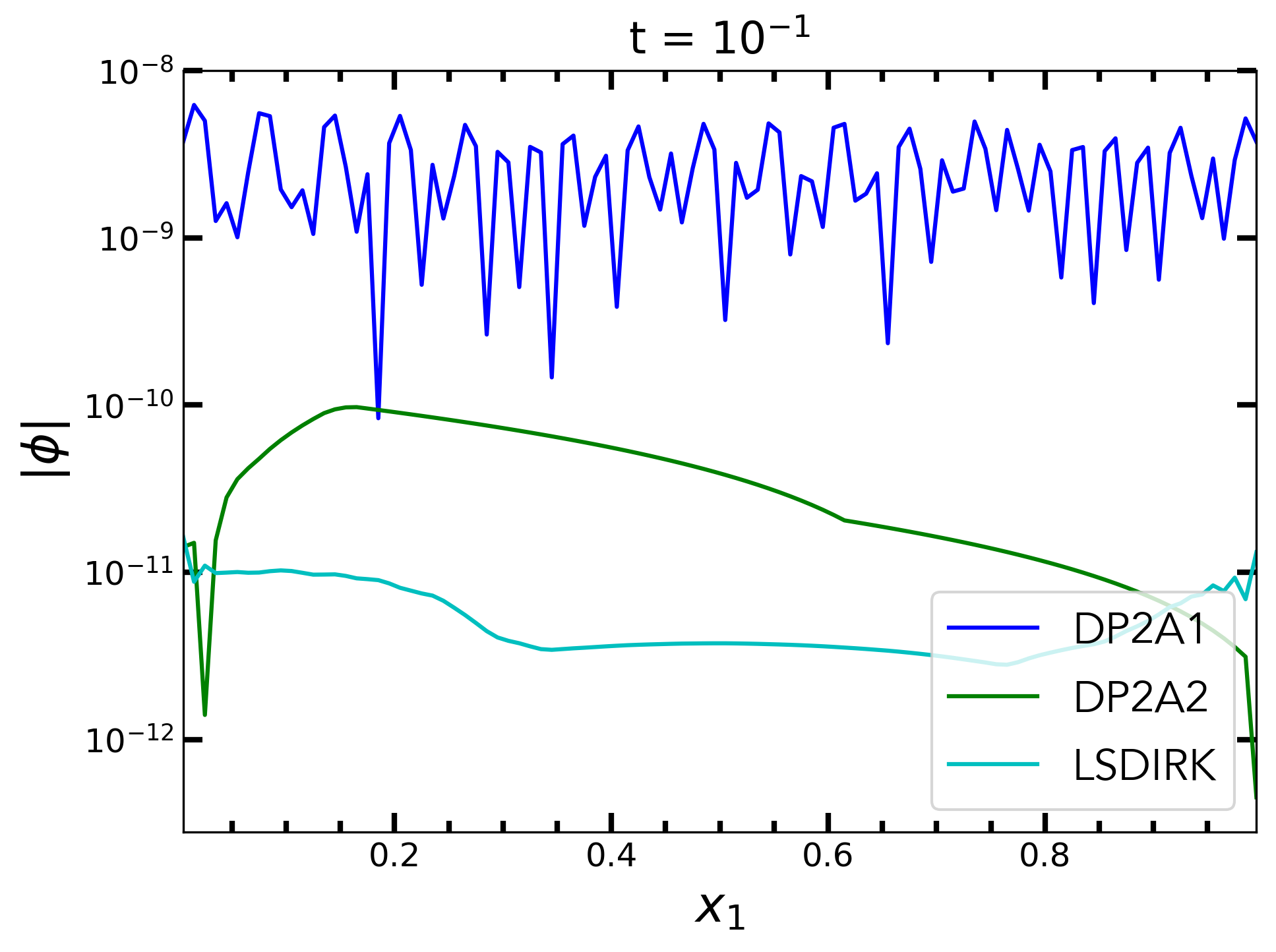}
  \caption{Case 1, under-resolved mesh $N = 10^2$
  with Penalised type-A IMEX and SI-IMEX Schemes. $\lambda = 10^{-4}$. Left: $x_1\to |\rho(t, x_1)-1|$, Center:
    $x_1\to |\dvg u(t, x_1)|$, Right: $x_1\to |\phi(t, x_1)|$, at time $t
    = 10^{-1}$, $K = 16$.}  
  \label{fig:Small_pertur_QN_ur_m1}
\end{figure}
The long time behaviour of all the penalised-IMEX schemes on a very coarse mesh in Figure~\ref{fig:Small_pertur_QN_ur_m1} showcases its ability to maintain the well-preparedness of the data for a long time even on a very coarse mesh. We should note that the asymptotic accuracy of the penalised-IMEX scheme is of paramount importance 
for a coarse mesh as, it effects in reduction in computational cost. The cost reduction stems from two primary branches: first being larger time steps requiring lesser number of time iterations and the second from the elliptic problems, now to be solved on a coarse mesh, in comparison to a fine mesh. To provide a reference the time steps for the penalised-IMEX schemes for the coarse mesh are $\Dlt \sim 0.00225$ whereas for the fine mesh $\Dlt \sim 0.000022$, leading to 46 and 4446 time steps, respectively. 

\paragraph{Case 2\\} Here the velocity is perturbed by the same cosine perturbation as that of Case 1, but with a magnitude of $\delta^2=10^{-2}$ 
in \eqref{eq:pert_qn_wp}. The initial data is not well-prepared in the sense of Definition~\ref{def:wp_data}. 

Figure~\ref{fig:Small_pertur_QN_ur_0p1_nwp} shows the density perturbation, divergence of velocity and the absolute value of potential for the penalised-IMEX schemes at a late time $t = 0.1$ on a coarse mesh $N = 10^{2}$ (not resolving $\lambda=10^{-4}$) with a CFL = 0.25 and Figure~\ref{fig:Small_pertur_QN_ur_1p0_nwp} shows the same at a very late time $t = 1.0$. Looking at both the figures we observe that DP2A2 scheme is able to achieve the equilibrium state quite early, whereas DP2A1 scheme take longer to capture 
the equilibrium. 
\begin{figure}[htbp]
  \centering
  \includegraphics[height=0.162\textheight]{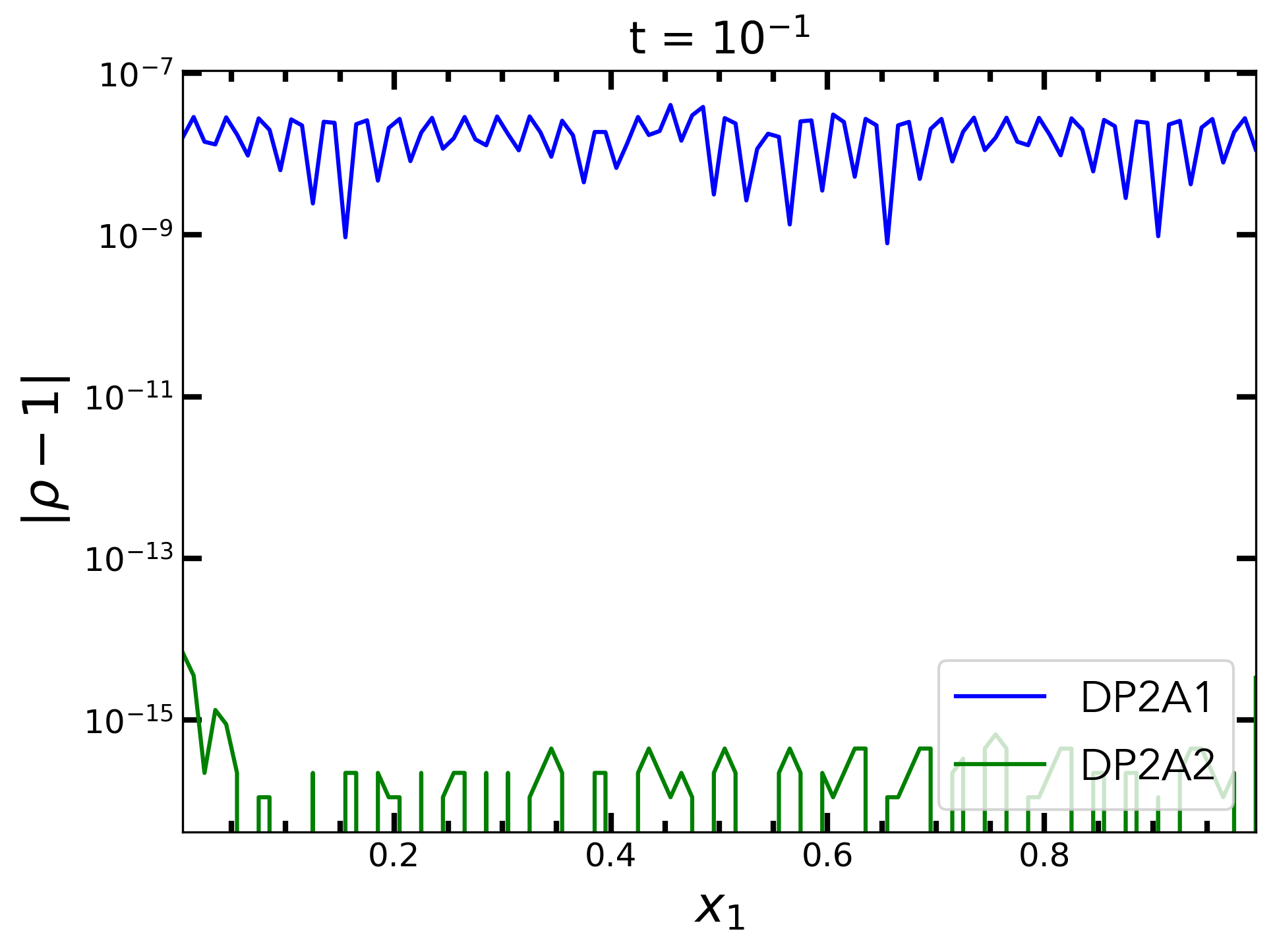}
  \includegraphics[height=0.162\textheight]{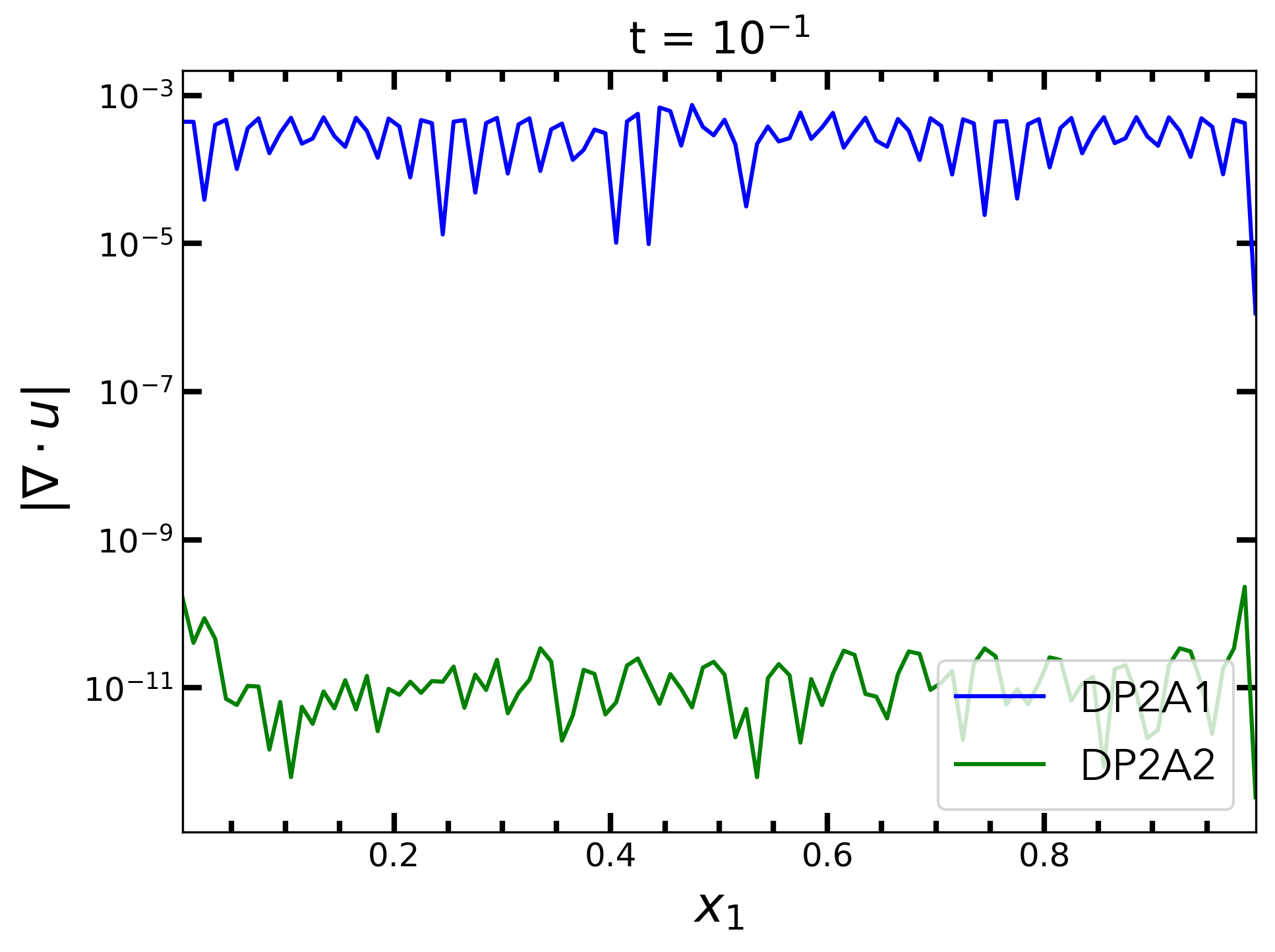}
  \includegraphics[height=0.162\textheight]{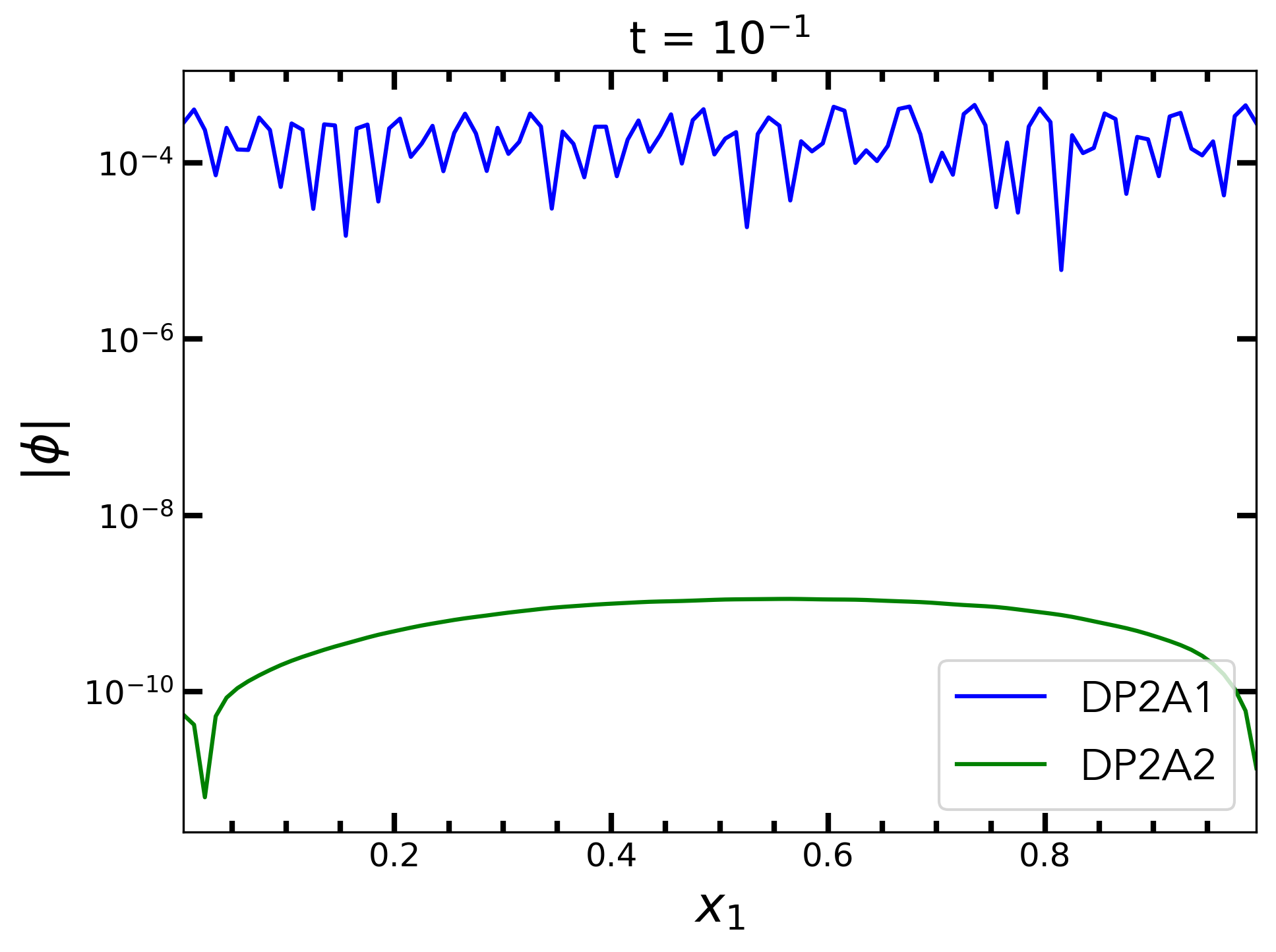}
  \caption{Case 2, under-resolved mesh $N = 10^{2}$. 
  with Penalised type-A IMEX Schemes. 
  $\lambda = 10^{-4}$. Left: $x_1\to |\rho(t, x_1)-1|$, Center:
    $x_1\to |\dvg u(t, x_1)|$, Right: $x_1\to |\phi(t, x_1)|$, at time $t
    = 0.1$, $K = 16$.}  
  \label{fig:Small_pertur_QN_ur_0p1_nwp}
\end{figure}
\begin{figure}[htbp]
  \centering
  \includegraphics[height=0.162\textheight]{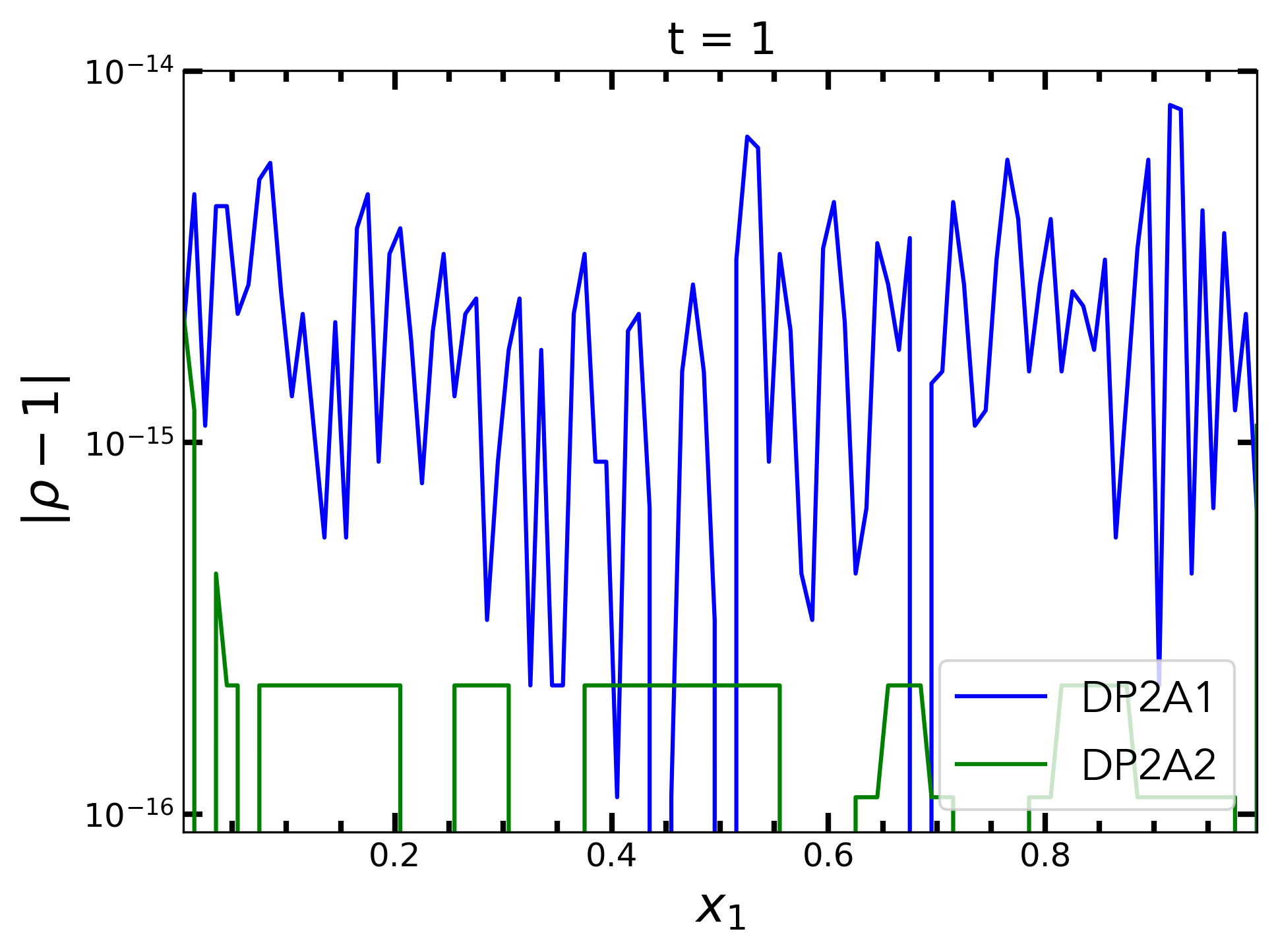}
  \includegraphics[height=0.162\textheight]{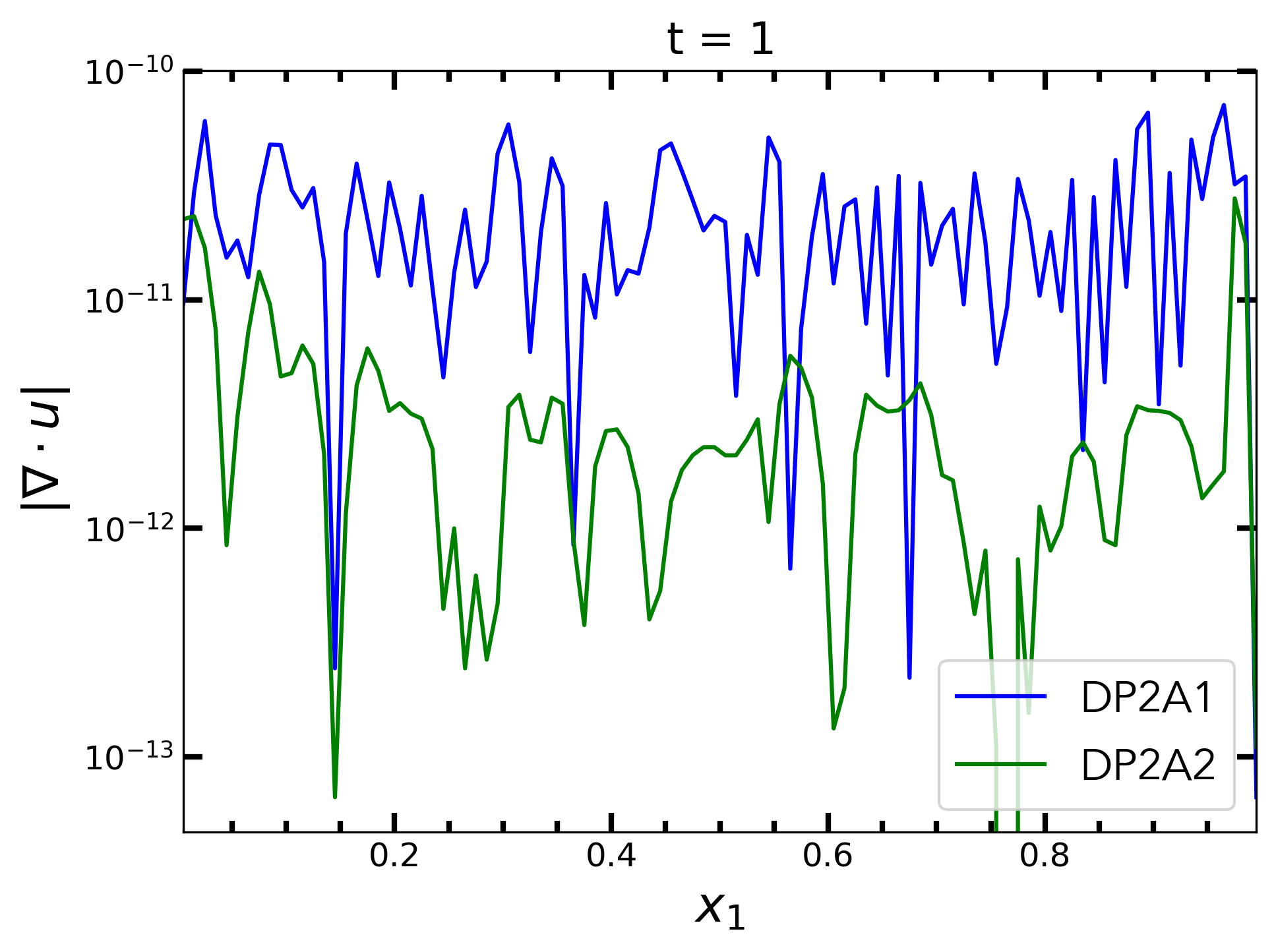}
  \includegraphics[height=0.162\textheight]{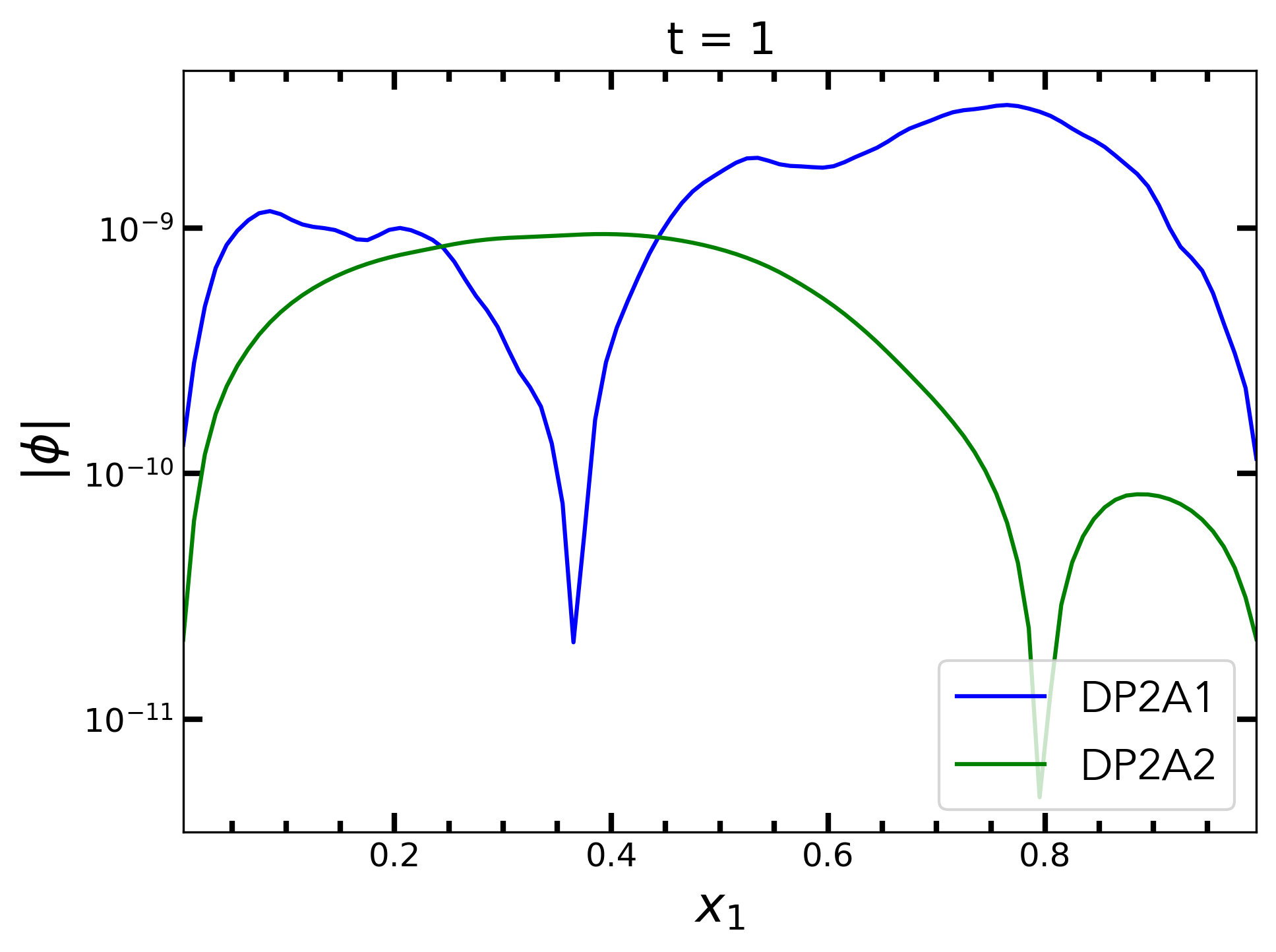}
  \caption{Case 2, under-resolved mesh $N = 10^{2}$. 
  with Penalised type-A IMEX Schemes. 
  $\lambda = 10^{-4}$. Left: $x_1\to |\rho(t, x_1)-1|$, Center:
    $x_1\to |\dvg u(t, x_1)|$, Right: $x_1\to |\phi(t, x_1)|$, at time $t
    = 1.0$, $K = 16$.}  
  \label{fig:Small_pertur_QN_ur_1p0_nwp}
\end{figure}

Therefore, this corroborates the conclusion of the analysis presented via Proposition~\ref{prop1} that even if the simulation was subjected to a data which is not well-prepared the type-A penalised IMEX scheme for the EP system could bring the solution closer to the asymptotic solution.

Figure~\ref{fig:Small_pertur_QN_ur_0p025_nwp} shows the plots of the same quantities as that of Figure~\ref{fig:Small_pertur_QN_ur_0p1_nwp} but for the type-CK (ARS) penalised-IMEX scheme at time $t = 0.025$. We can clearly see that the type-CK scheme is not only not able to maintain a solution close to the asymptotic regime i.e $\rho - 1 = \mathcal{O}(\lambda^2)$, $\dvg u = \mathcal{O}(\lambda^2)$ and $\phi = \mathcal{O}(1)$, but it does far worse. The solution starts blowing up in a very very small time. 
\begin{figure}[htbp]
  \centering
  \includegraphics[height=0.162\textheight]{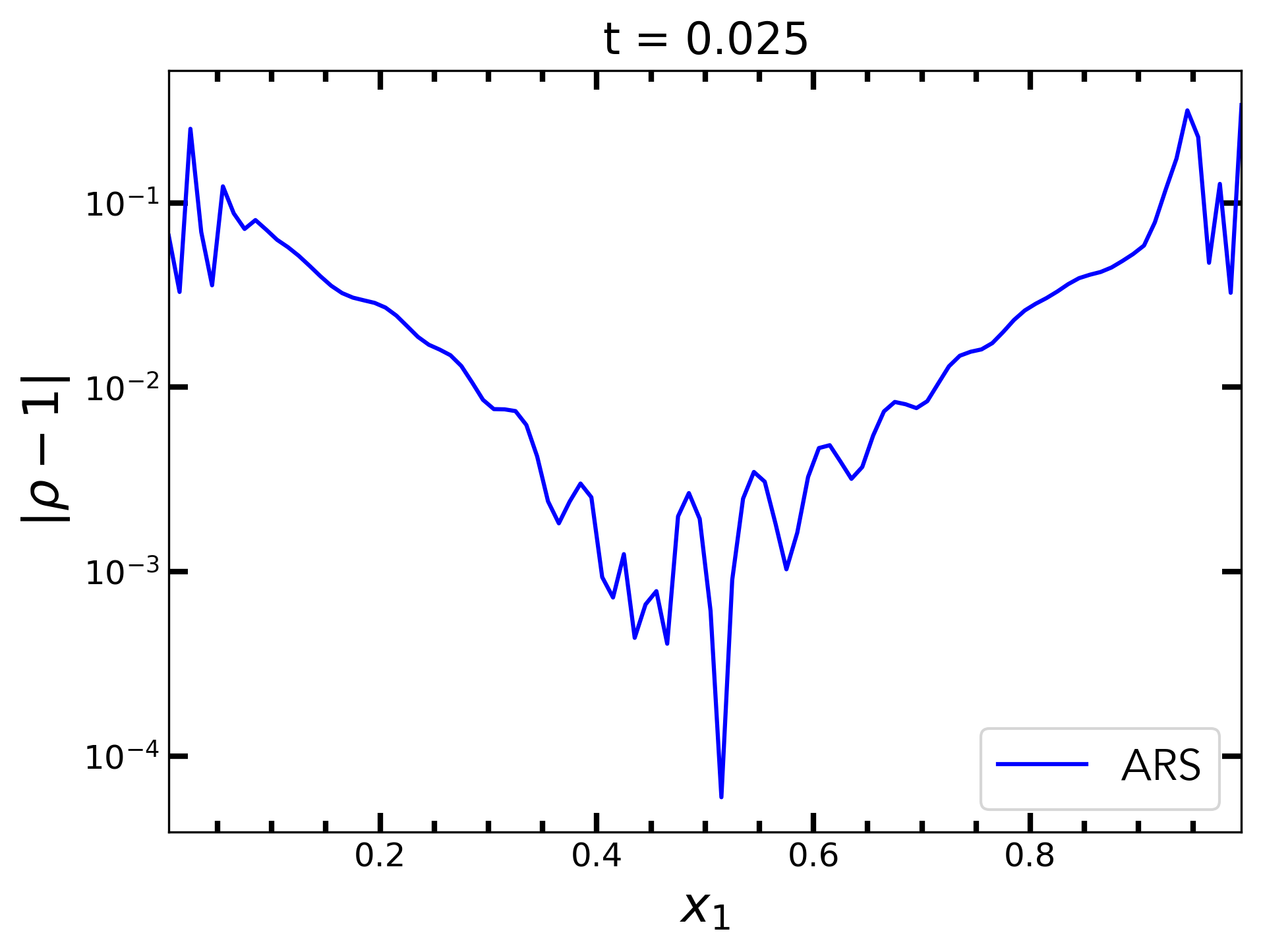}
  \includegraphics[height=0.162\textheight]{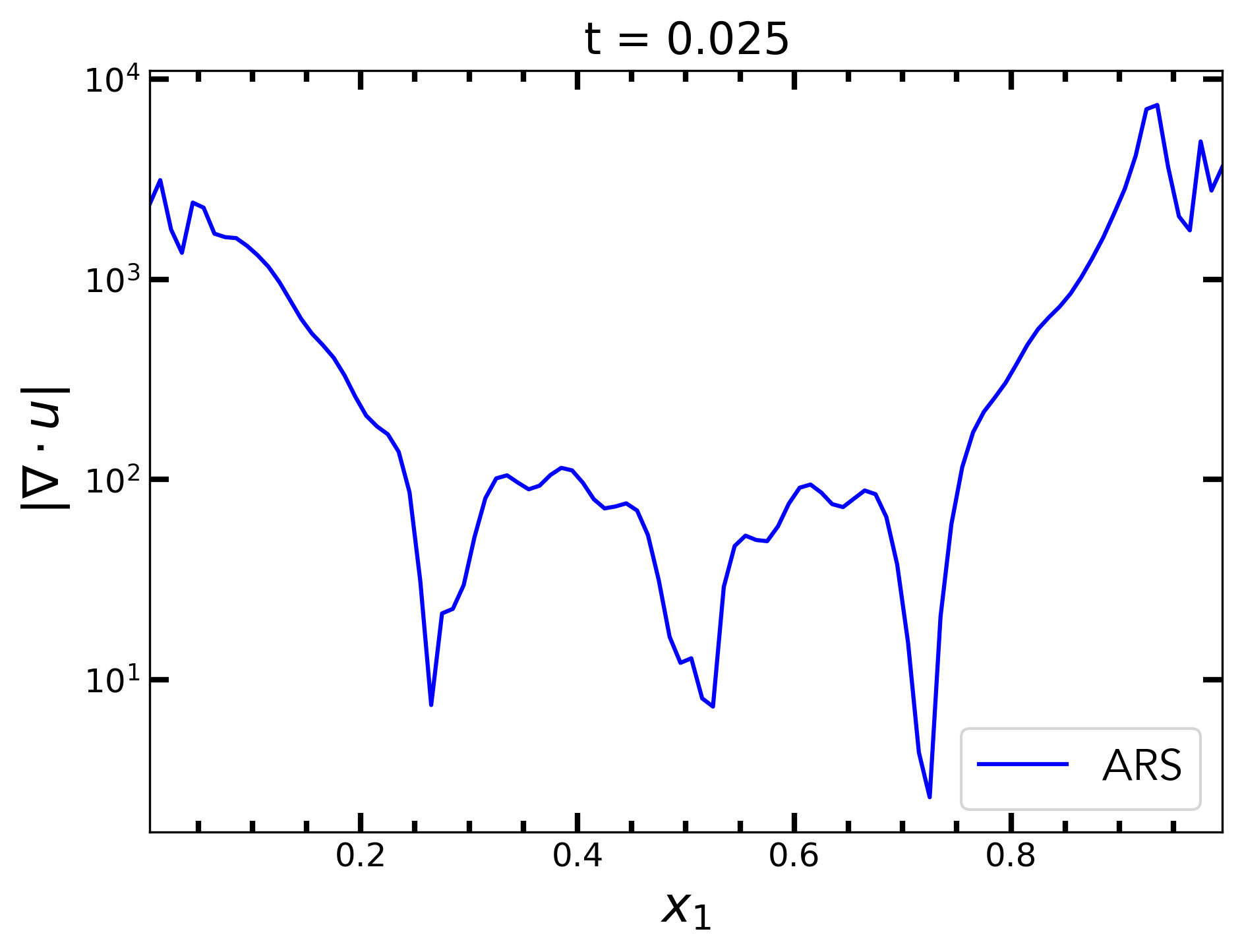}
  \includegraphics[height=0.162\textheight]{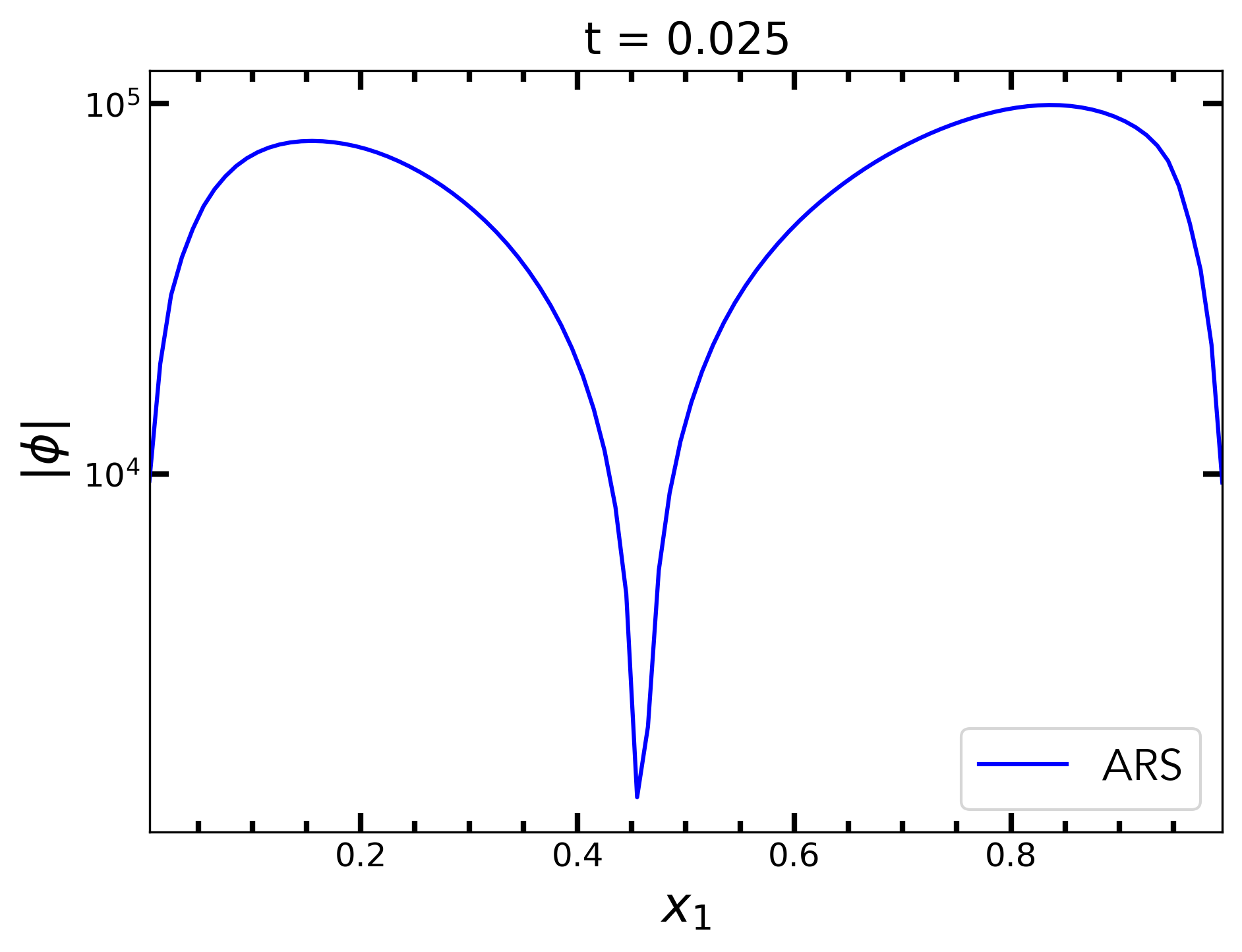}
  \caption{Case 2, under-resolved mesh $N = 10^{2}$. 
  with Penalised CK-IMEX Schemes. $\veps = 10^{-4}$,
    $N = 10^{-2}$ (Under resolved mesh). Left: $x_1\to |\rho(t, x_1)-1|$, Center:
    $x_1\to |\dvg u(t, x_1)|$, Right: $x_1\to |\phi(t, x_1)|$, at time $t
    = 0.025$, $K = 16$.}  
  \label{fig:Small_pertur_QN_ur_0p025_nwp}
\end{figure}

\subsection{Slight Perturbation of a Maxwellian}
Both the classical higher order additive IMEX-RK-DAE scheme presented in \cite{ACS24} and the penalised-IMEX schemes; see Definition~\ref{def:fully_disc_schm}, are developed  based on the same additive IMEX platform \eqref{eq:imex_Yi}-\eqref{eq:imex_yn+1}. In \cite{ACS24} it was shown that the classical higher order schemes are plagued with stability issues when subjected to a coarse mesh without the time step restriction of $\Delta t < \lambda$. This test case is aimed at showing the superiority of the new high order penalised-IMEX schemes over the high order classical schemes. Even though developed on the same additive IMEX framework, the schemes developed in this paper owe some robustness due to the stabilising penalisation. To show that the choice of the Butcher tableau is just one aspect to get a good scheme but it has to be coupled with a prudent splitting (or choice of implicit and explicit terms), we choose the same Butcher tableau i.e. DP2A2 for both the classical scheme (CL-DP2A2) and the penalised-IMEX scheme (AP-DP2A2).
 
Here we consider a non well-prepared initial data. The test case is from \cite{Neg13}, wherein a small perturbation of amplitude $\delta = 10^{-2}$ is added to a Maxwellian. The initial data is stated as:
\begin{equation}
    \rho(0, x_1) = 1.0 + \delta \sin (\kappa \pi x_1), \;\; u(0, x_1)=0, 
\end{equation}
where the frequency is $\kappa = 2220$. The value of $\kappa$ is chosen such that $\kappa \sim \lambda^{-1}$ to ensure that the wavelength of the density perturbation is of the same order as the Debye length. This is a slight perturbation of the steady state equilibrium, so  {initially the plasma is at rest. 
\begin{figure}[htbp]
  \centering
  \includegraphics[height=0.23\textheight]{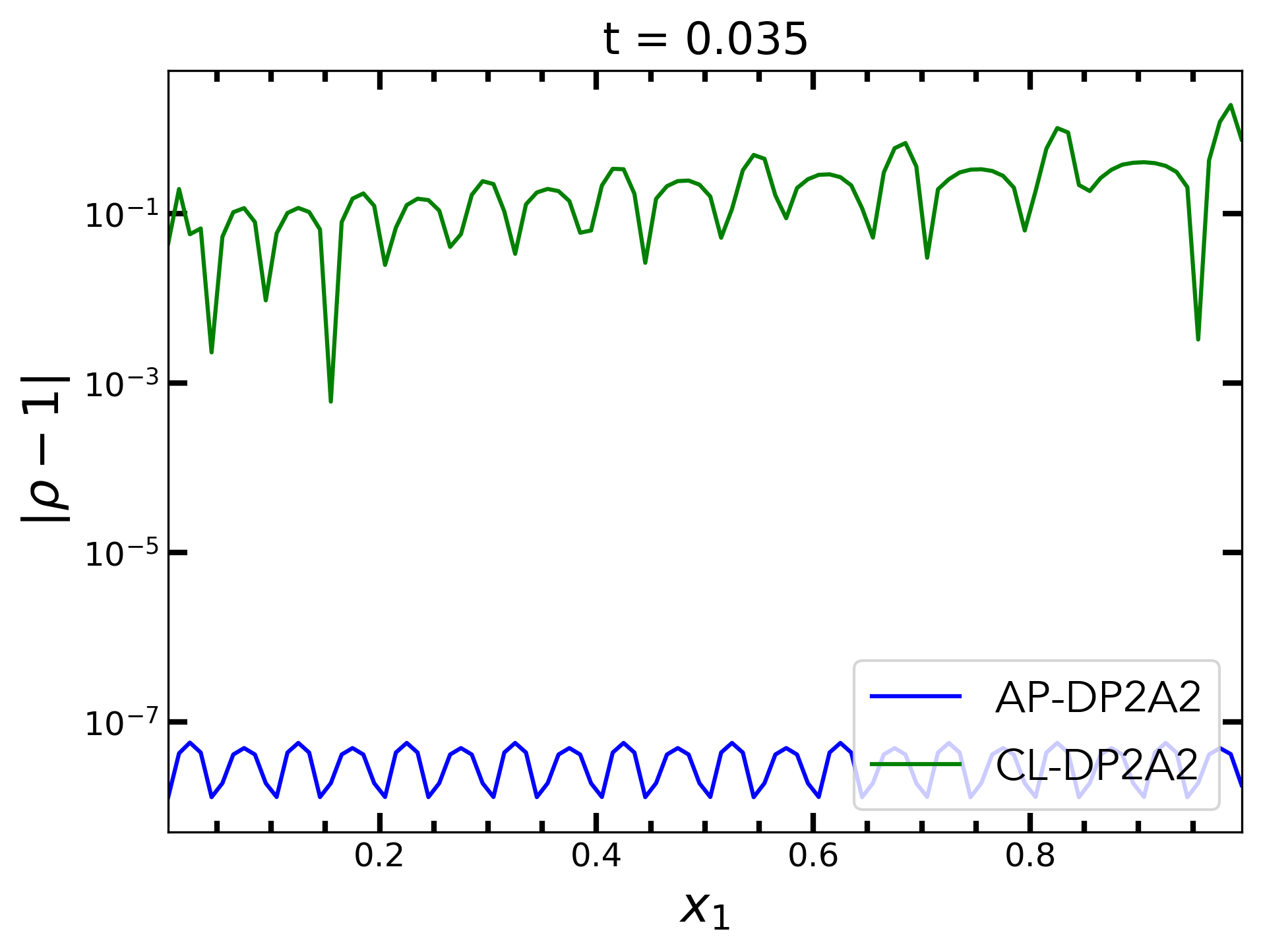}
  \
  \includegraphics[height=0.23\textheight]{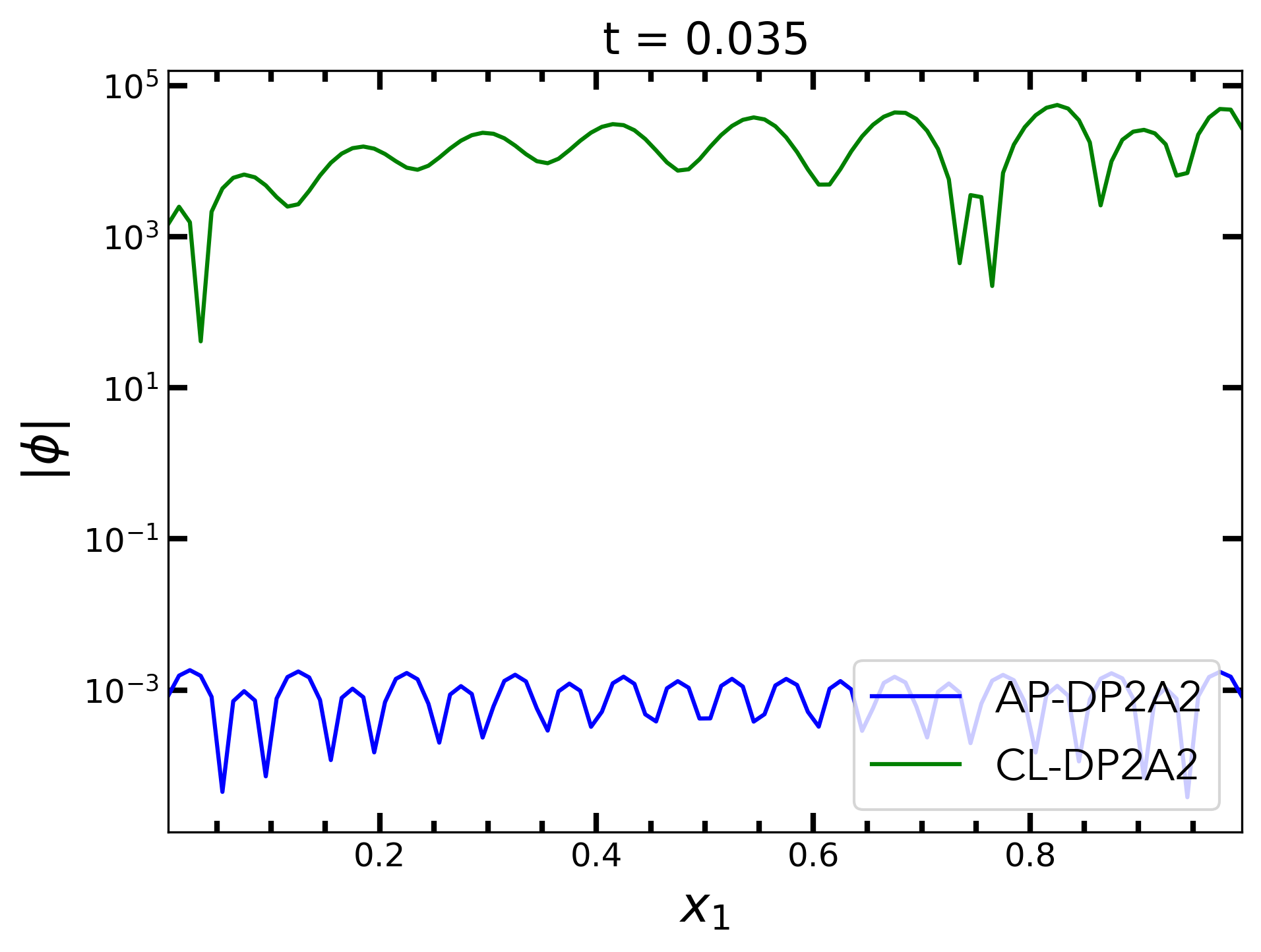}
  \caption{Slight perturbation of a Maxwellian with Penalised type-A IMEX and Classical schemes. $\lambda = 10^{-4}$,
    $N = 10^2$ (Unresolved mesh). Left: $x_1\to |\rho(t, x_1)-1|$ and
    Right: $x_1\to |\phi(t, x_1)|$, at time = 0.035}  
  \label{fig:Small_max_pert_unres}
\end{figure}
For all the plots in this subsection the $y$-axis is in log scale. Figure~\ref{fig:Small_max_pert_unres} shows the plots of the density perturbation ($|\rho - 1|$) and absolute value of electric potential computed with additive IMEX schemes, with penalisation (AP-DP2A2) and without penalisation (CL-DP2A2). The spatial mesh parameters do not resolve $\lambda$. For the classical scheme (CL-DP2A2) the time mesh steps are chosen to be $\Dlt = 10^{-4} = \lambda$ (required for stability), which resolves $\lambda$. Whereas for the penalised-IMEX scheme (AP-DP2A2) the time step are uniformly chosen to be $\Dlt = 0.5 \Delta x_1$, not resolving $\lambda$. Clearly, even after getting extra stability restrictions the classical scheme doesn't perform well and the potential starts blowing up. On the other side, in spite of generous stability restriction the new Penalised-IMEX schemes are able to maintain the equilibrium state quite accurately, despite the coarse spatial mesh. Figure~\ref{fig:Small_max_pert_unres_t0p1} shows the same quantities as Figure~\ref{fig:Small_max_pert_unres} but just for the penalised-IMEX scheme at a much later time $t = 0.1$. We can clearly see that the numerical solution is very close to the equilibrium state.
\begin{figure}[htbp]
  \centering
  \includegraphics[height=0.23\textheight]{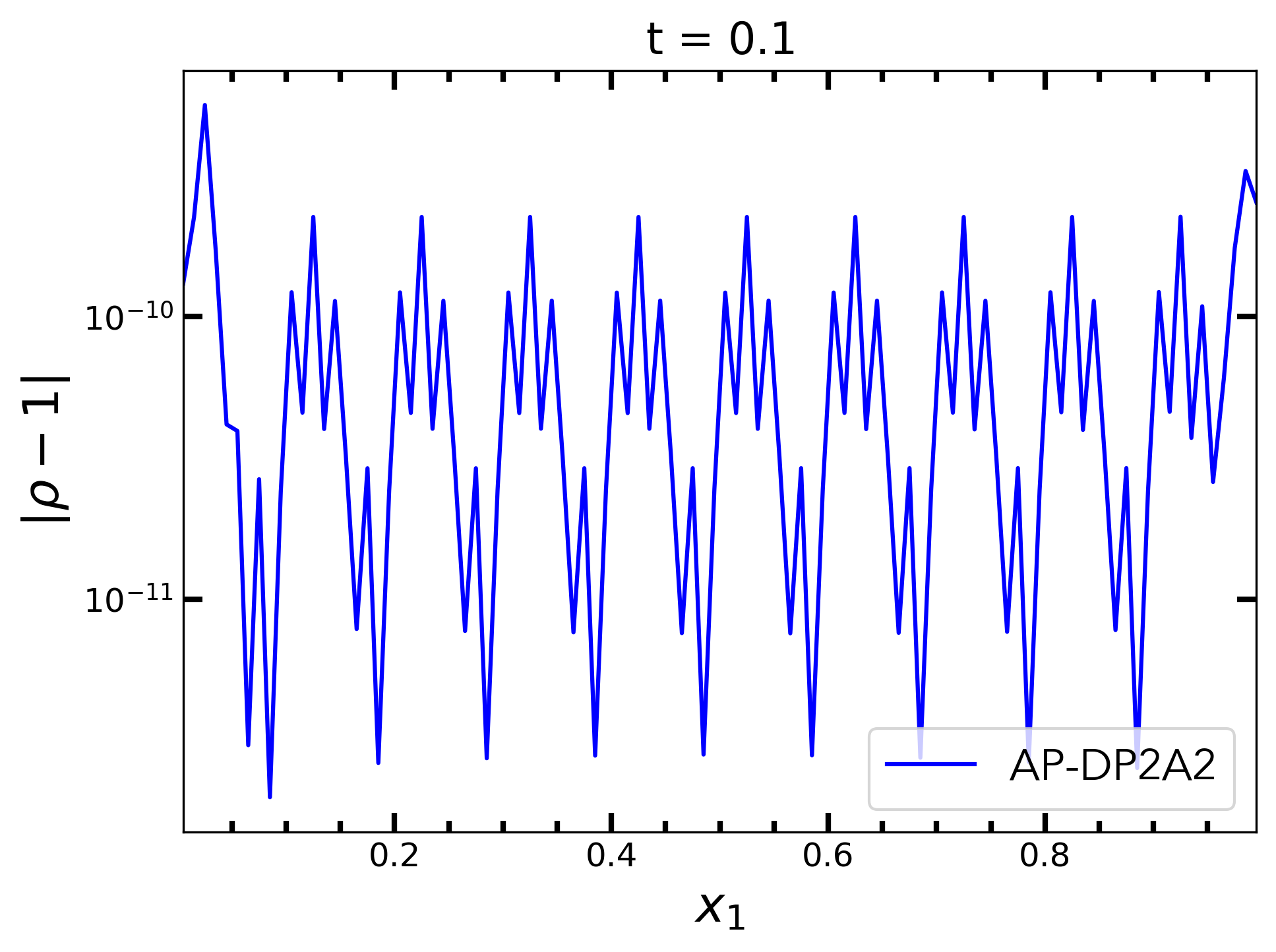}
  \
  \includegraphics[height=0.23\textheight]{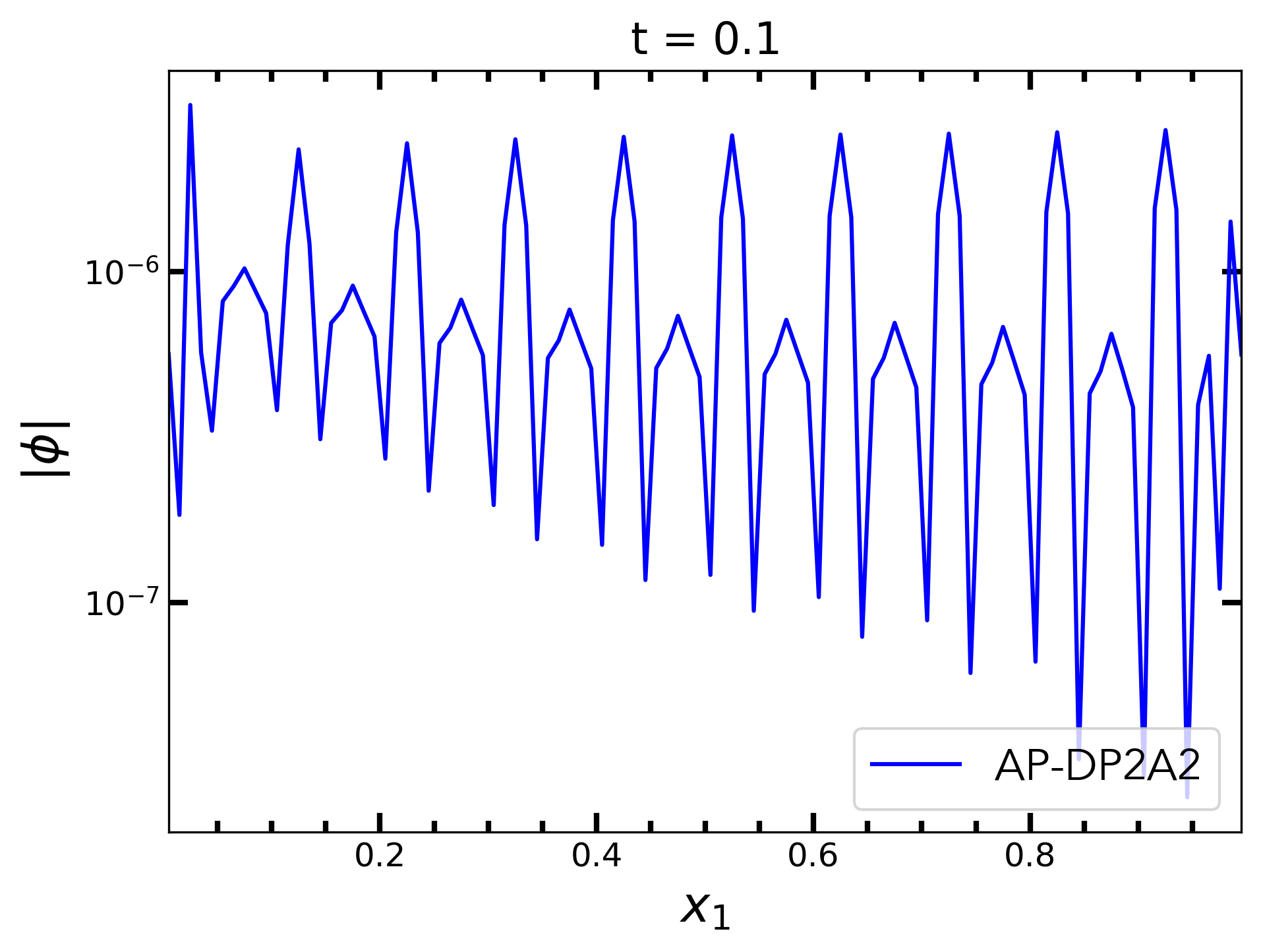}
  \caption{Slight perturbation of a Maxwellian with Penalised-AP scheme. $\veps = 10^{-4}$,
    $N = 10^2$ (Unresolved mesh).Left: $x_1\to |\rho(t, x_1)-1|$ and
    Right: $x_1\to |\phi(t, x_1)|$, at time = 0.1}  
  \label{fig:Small_max_pert_unres_t0p1}
\end{figure}

\subsection{Asymptotic Order of Convergence (AOC)}
\label{sec:ep_numer_case_stud_P2}
In this subsection, we want to illustrate the order of convergence of the asymptotic scheme. 
To do so, we set the following experiment to numerically validate the asymptotic order  convergence of the numerical solution to the
solution of the asymptotic model.  

In order to do this, we compute the 
asymptotic order of convergence, as is defined in \cite{AS20}, as the 
experimental rate of convergence of a numerical scheme with respect to 
the asymptotic limit solution. Therefore the exact solution, used as a 
reference, is the solution of the incompressible limit
\eqref{eq:ep_mass_lim}-\eqref{eq:ep_poi_lim_}.

 Let $U_{\lambda}(t^n)$ be the solution of the EP system \eqref{eq:ep_nd_mass}-\eqref{eq:ep_nd_poi} and $U_{(0)}(t^n)$ the solution of the incompressible
limit system \eqref{eq:ep_mass_lim}-\eqref{eq:ep_poi_lim_}, at time $t^n$. Let $U_{\lambda}^n$ be the numerical solution obtained using any second order accurate Penalised IMEX scheme for the EP system \eqref{eq:ep_nd_mass}-\eqref{eq:ep_nd_poi} at the same
time. Then we expect the following estimates:
\begin{align}
U_{\lambda}^{n} - U_{(0)}^n &= U_{\lambda}^{n} -U_{\lambda}(t^n)  +  U_{\lambda}(t^n)  - U_{(0)}(t^n) +  U_{(0)}(t^n) - U_{(0)}^n \\
  & = \mathcal{O}(\Dlt^2) + \mathcal{O}(\lambda) + \mathcal{O}(\Dlt^2).
\end{align}
Therefore from the above calculations, we expect when  $\lambda \to 0$
\begin{equation}
 U_{\lambda}^{n} - U_{(0)}^n  = \mathcal{O}(\Dlt^2). 
\end{equation}
To this end we take the initial conditions as considered in
Section~\ref{sec:ep_numer_case_stud_P1} with a modification in the
simulation domain and the value of the perturbation parameter. Particularly, we
expand the plasma domain so that the initial perturbation can travel
within the domain before encountering the boundary conditions at the
domain walls. Consequently, the spatial domain is set to be the
interval $x\in [0, 20]$, and the perturbation parameter is
increased to $\delta = 10^{-2}$ and the parameter $K$ is set to be $1$.

Here we have considered three different values of $\lambda \in \{10^{-4},
10^{-5}, 10^{-6} \}$, all in the asymptotic range. As defined in \cite{AS20}, the experimental order of convergence computed using incompressible data  $(\rho_{(0)} ,u_{1,(0)}, \phi_{(0)})$ as the reference solution is termed as the asymptotic
order of convergence (AOC).
\begin{table}
  \begin{tabular}[t] {
|p{1.3cm}|p{2.2cm}|p{0.8cm}||p{2.2cm}|p{0.8cm}||p{2.2cm}|p{0.8cm}|} 
    \hline
    & $\lambda = 10^{-4}$ & & $\lambda = 10^{-5}$  & & $\lambda = 10^{-6}$
    & \\
    \hline
    $N$& $L^2$ Error in $\phi$ & AOC & $L^2$ Error in
$\phi$   & AOC & $L^2$ Error in $\phi$ & AOC  \\
    \hline
    320  & 3.2851E-03 & & 3.2896E-03 & & 3.2886E-03 & \\
    640 & 8.1925E-04 & 2.00 & 8.2002E-04 & 2.00 & 8.2648E-04 &1.99\\
    1280 & 2.4011E-04 & 1.77 & 2.4018E-05 & 1.77  & 2.3955E-05 & 1.78\\
    2560  & 6.2858E-05 & 1.93 & 6.2700E-06 & 1.93 & 8.0036E-06 & 1.58\\
    \hline
  \end{tabular}
  \caption{ AOC for Penalised-IMEX (DP2A2) scheme for $\lambda
      = 10^{-4}, 10^{-5}, 10^{-6}$.}
  \label{table:ep_AOC}
\end{table}
Table~\ref{table:ep_AOC} shows the uniform second order convergence of the penalised-IMEX scheme in the QN regime. 

\subsection{Perturbation of QN Plasma in 2D}
\label{subsec:nr_2d}
This test case is a two-dimensional version of the test considered in
Section~\ref{sec:ep_numer_case_stud_P1}. Indeed, we consider the following initial condition where the perturbation is considered on a (divergence free) velocity field 
\begin{equation}
  \begin{aligned}
    \rho (0, x_1, x_2) &  = 1., \\
    \ u_1(0, x_1, x_2) & = 1. + \sin (K \pi (x_1 - x_2)) + \lambda \sin (K \pi (x_1 + x_2)), \\
    \ u_2(0, x_1, x_2) &  = 1. + \sin (K \pi (x_1 - x_2)) + \lambda \cos (K \pi (x_1 + x_2)), \\
    \phi (0, x_1, x_2) &  = 0, 
  \end{aligned}
\end{equation}
where $(x_1, x_2)\in [0, 1]^2$ and $K=16$. 
$$
||\rho - 1||_{L^2} = \mathcal{O}(\lambda^2) \qquad ||\dvg u||_{L^2} = \mathcal{O}(\lambda).
$$
  Note that the perturbation chosen here pertains to a set of
  non-well-prepared initial data, cf.\ Definition \ref{def:wp_data}.
  The density is well-prepared, whereas the velocity even though plagued with
  a small perturbation ($\mathcal{O}(\lambda)$), is still not well-prepared.  
Throughout this test, the Penalised-type A IMEX scheme is   subjected to the advective CFL condition~\eqref{eq:ep_CFL}. The CFL
  number is set to be $0.45$. Periodic boundary conditions are imposed 
  for all the components and a mean-free condition on $\phi$ is added to 
  ensure its uniqueness. The value of the isentropic parameter
  $\gamma$ for the pressure is taken to be $2$: $p(\rho) = \rho^2$.
\begin{figure}[htbp]
  \centering
  \includegraphics[height=0.24\textheight]{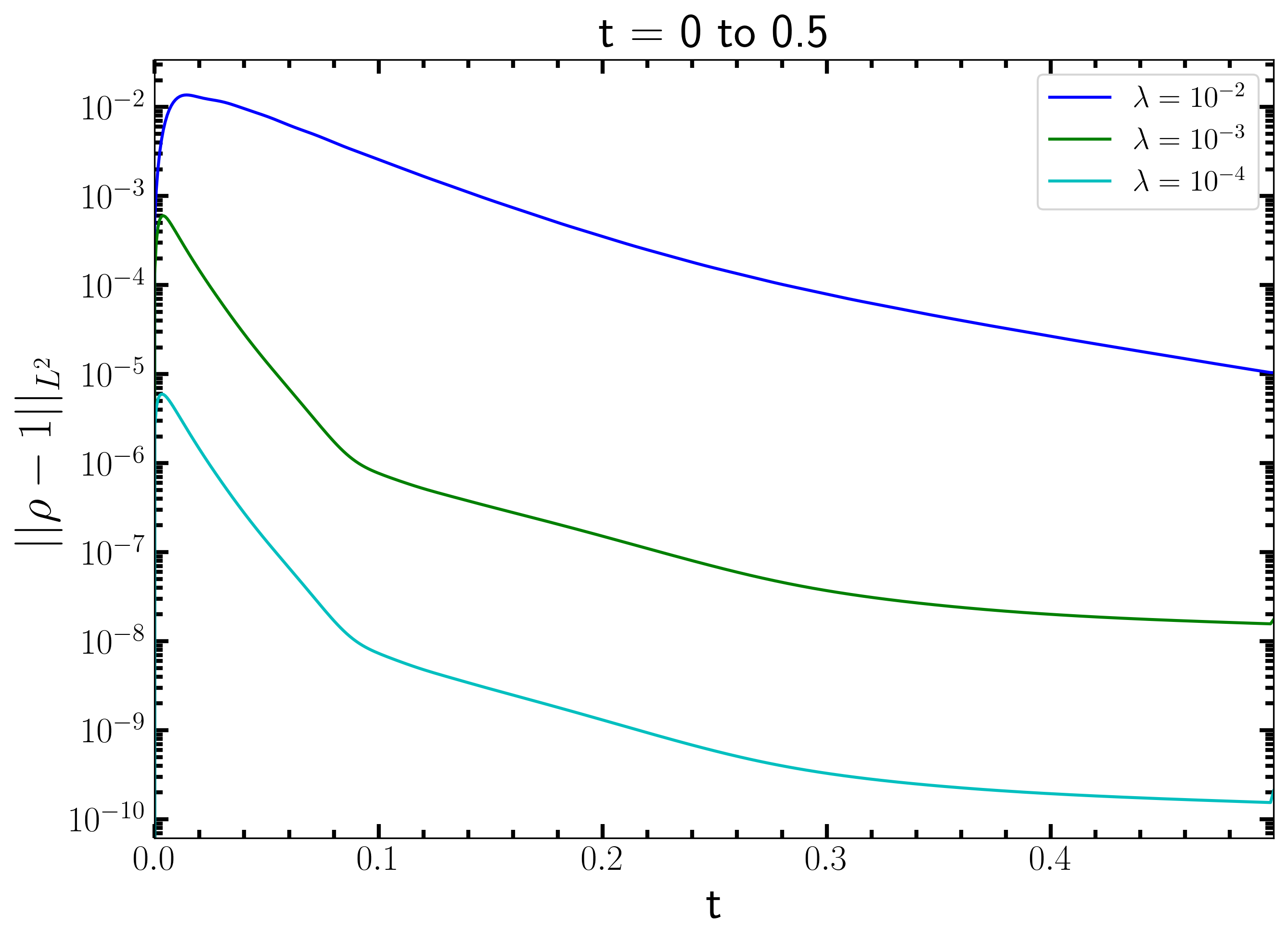}
  \includegraphics[height=0.24\textheight]{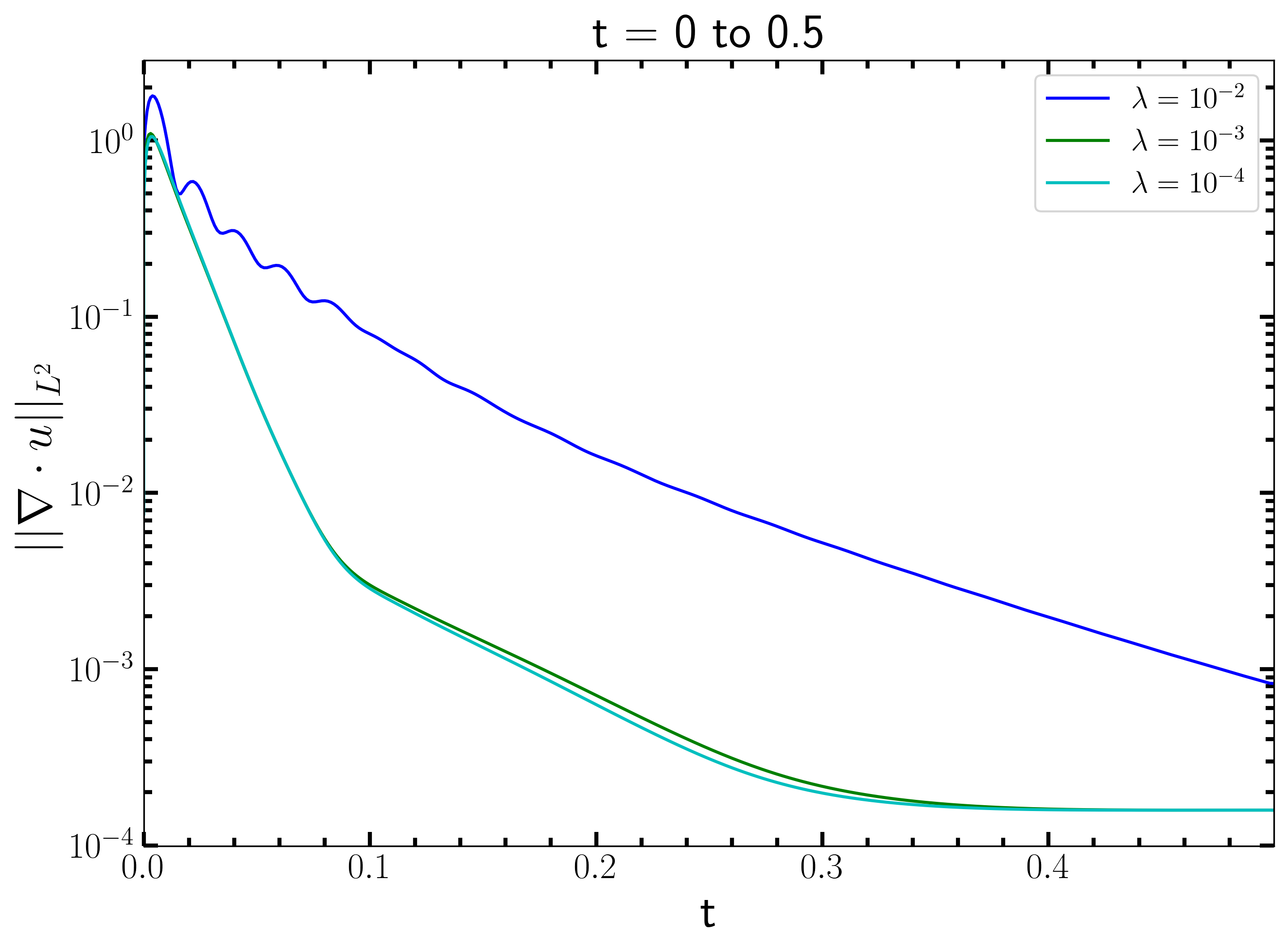}
  \caption{2D test: Small perturbation of a quasineutral state with Penalised-IMEX (DP2A2) scheme. Left: $t \to ||\rho(t, x_1, x_2) - 1||_{L^2}$ and Right: $t \to ||\nabla \cdot u (t, x_1, x_2)||_{L^2}$, from time $t = 0$ to $t = 0.5$, for $\lambda =  10^{-2}, 10^{-3}$ and $10^{-4}$.}
  \label{fig:2d_L2_vs_t}
\end{figure}

Figure~\ref{fig:2d_L2_vs_t} shows the time evolution of the asymptotic parameters under study $||\rho - 1||_{L^2}$ and $|| \dvg u||_{L^2}$ in log scale, for the time domain $t \in [0.0.5]$ for $\lambda$ in the range $\{10^{-2}, 10^{-3}, 10^{-4} \}$. For each of the values of $\lambda$ we see that the penalised-IMEX scheme is able to maintain the initial proximity of the solution to the QN limit. This shows that the scheme is asymptotically consistent with the QN limit system.

\section{Conclusions} 
In this paper, we developed and analyzed a class of high-order penalized Implicit-Explicit (IMEX) schemes for the Euler-Poisson (EP) system in the quasineutral limit. These schemes were designed to be uniformly stable with respect to the Debye length and to degenerate into accurate discretizations of the quasineutral system as the Debye length approaches zero. This characteristic allows the schemes to efficiently handle the coexistence of quasineutral and non-quasineutral regions, which is a possilbe scenario in practical plasma applications.

We addressed one of the major limitations of previous studies, which assumed well-prepared initial conditions. In contrast, the schemes proposed in this work are capable of handling more general initial conditions without losing stability or consistency with the asymptotic limit. This broadens the applicability of the numerical methods to more realistic plasma scenarios where such assumptions cannot always be guaranteed.

Our analysis and numerical tests demonstrated that the proposed schemes possess the desired Asymptotic-Preserving (AP) property, ensuring that they remain accurate and stable even in the stiff quasineutral regime. We also explored the performance of different IMEX schemes, showing that certain types (such as type-A IMEX Runge-Kutta methods) are more suitable for this class of problems compared to others (such as type-CK schemes), which may fail under certain conditions.

In conclusion, the penalized IMEX schemes introduced in this paper offer a robust and computationally efficient solution for simulating plasma phenomena in the quasineutral limit, making them a valuable tool. Future work could extend the schemes here presented to more general models and to higher-dimensional problems.
 
\bibliographystyle{acm}
\bibliography{ref}

\begin{thebibliography}{10}

\bibitem{ALVAREZLAGUNA2020109634}
{\sc {Alvarez Laguna}, A., Pichard, T., Magin, T., Chabert, P., Bourdon, A., and Massot, M.}
\newblock An asymptotic preserving well-balanced scheme for the isothermal fluid equations in low-temperature plasmas at low-pressure.
\newblock {\em Journal of Computational Physics 419\/} (2020), 109634.

\bibitem{diffusion2}
{\sc Anandan, A., Boutin, B., and Crouseilles, N.}
\newblock High order asymptotic preserving scheme for linear kinetic equations with diffusive scaling.
\newblock {\em M2AN\/} (2024).

\bibitem{ACS24}
{\sc Arun, K.~R., Crouseilles, N., and Samantaray, S.}
\newblock High order asymptotic preserving and classical semi-implicit rk schemes for the euler--poisson system in the quasineutral limit.
\newblock {\em Journal of Scientific Computing 100}, 1 (2024), 24.

\bibitem{AS20}
{\sc Arun, K.~R., and Samantaray, S.}
\newblock Asymptotic preserving low {M}ach number accurate {IMEX} finite volume schemes for the isentropic {E}uler equations.
\newblock {\em J. Sci. Comput. 82}, 2 (2020), Art. 35, 32.

\bibitem{Belaouar}
{\sc Belaouar, R., Crouseilles, N., Degond, P., and Sonnendr\"ucker, E.}
\newblock An asymptotically stable semi-{L}agrangian scheme in the quasi-neutral limit.
\newblock {\em J. Sci. Comput. 41}, 3 (2009), 341--365.

\bibitem{Bessemoulin}
{\sc Bessemoulin-Chatard, M., Chainais-Hillairet, C., and Vignal, M.-H.}
\newblock Study of a finite volume scheme for the drift-diffusion system. {A}symptotic behavior in the quasi-neutral limit.
\newblock {\em SIAM J. Numer. Anal. 52}, 4 (2014), 1666--1691.

\bibitem{boscarino}
{\sc Boscarino, S., Pareschi, L., and Russo, G.}
\newblock {Implicit-Explicit Runge--Kutta schemes for hyperbolic systems and kinetic equations in the diffusion limit}.
\newblock {\em SIAM J. Sci. Comput. 35}, 1 (2013), A22--A51.

\bibitem{BQR19}
{\sc Boscarino, S., Qiu, J.-M., Russo, G., and Xiong, T.}
\newblock A high order semi-implicit {IMEX} {WENO} scheme for the all-{M}ach isentropic {E}uler system.
\newblock {\em J. Comput. Phys. 392\/} (2019), 594--618.

\bibitem{Chen}
{\sc {Chen}, F.~F.}
\newblock {\em {Introduction to plasma physics}}.
\newblock New York: Plenum Press, 1974.

\bibitem{CG00}
{\sc Cordier, S., and Grenier, E.}
\newblock Quasineutral limit of an {E}uler-{P}oisson system arising from plasma physics.
\newblock {\em Comm. Partial Differential Equations\/} (2000), 1099--1113.

\bibitem{Crispel}
{\sc Crispel, P., Degond, P., and Vignal, M.-H.}
\newblock Quasi-neutral fluid models for current-carrying plasmas.
\newblock {\em J. Comput. Phys. 205}, 2 (2005), 408--438.

\bibitem{Crouseilles}
{\sc Crouseilles, N., Dimarco, G., and Vignal, M.-H.}
\newblock Multiscale schemes for the {BGK}-{V}lasov-{P}oisson system in the quasi-neutral and fluid limits. {S}tability analysis and first order schemes.
\newblock {\em Multiscale Model. Simul. 14}, 1 (2016), 65--95.

\bibitem{Deg13}
{\sc Degond, P.}
\newblock Asymptotic-preserving schemes for fluid models of plasmas.
\newblock {\em In {\em Numerical models for fusion}, volume 39/40 of {\em Panoramas Synth\`eses}\/} (2013), 1--90.

\bibitem{Degond_deluzet}
{\sc Degond, P., Deluzet, F., and Doyen, D.}
\newblock Asymptotic-preserving particle-in-cell methods for the {V}lasov-{M}axwell system in the quasi-neutral limit.
\newblock {\em J. Comput. Phys. 330\/} (2017), 467--492.

\bibitem{Dimarco_rispoli}
{\sc Dimarco, G., Mieussens, L., and Rispoli, V.}
\newblock An asymptotic preserving automatic domain decomposition method for the {V}lasov-{P}oisson-{BGK} system with applications to plasmas.
\newblock {\em J. Comput. Phys. 274\/} (2014), 122--139.

\bibitem{dimarco_imexrk}
{\sc Dimarco, G., and Pareschi, L.}
\newblock {Asymptotic Preserving Implicit-Explicit Runge--Kutta Methods for Nonlinear Kinetic Equations}.
\newblock {\em SIAM J. Numer. Anal. 51}, 2 (2013), 1064--1087.

\bibitem{Dimits2000969}
{\sc Dimits, A., Bateman, G., Beer, M., Cohen, B., Dorland, W., Hammett, G., Kim, C., Kinsey, J., Kotschenreuther, M., Kritz, A., Lao, L., Mandrekas, J., Nevins, W., Parker, S., Redd, A., Shumaker, D., Sydora, R., and Weiland, J.}
\newblock Comparisons and physics basis of tokamak transport models and turbulence simulations.
\newblock {\em Physics of Plasmas 7}, 3 (2000), 969 – 983.

\bibitem{fabre}
{\sc Fabre, S.}
\newblock Stability analysis of the {E}uler-{P}oisson equations.
\newblock {\em J. Comput. Phys.\/} (1992), 445--451.

\bibitem{Fab92}
{\sc Fabre, S.}
\newblock Stability analysis of the {E}uler-{P}oisson equations.
\newblock {\em J. Comput. Phys. 101}, 2 (1992), 445--451.

\bibitem{GP16}
{\sc Guermond, J.-L., and Popov, B.}
\newblock Fast estimation from above of the maximum wave speed in the {R}iemann problem for the {E}uler equations.
\newblock {\em J. Comput. Phys. 321\/} (2016), 908--926.

\bibitem{hairer-wanner-2}
{\sc Hairer, E., and Wanner, G.}
\newblock {\em Solving ordinary differentia equations. II}.
\newblock Springer Series in Computational Mathematics, 1996.

\bibitem{HW96}
{\sc Hairer, E., and Wanner, G.}
\newblock {\em Solving ordinary differential equations. {II}}, second~ed., vol.~14 of {\em Springer Series in Computational Mathematics}.
\newblock Springer-Verlag, Berlin, 1996.
\newblock Stiff and differential-algebraic problems.

\bibitem{diffusion1}
{\sc Jang, J., Li, F., Qiu, J., and Xiong, T.}
\newblock High order asymptotic preserving dg-imex schemes for discrete-velocity kinetic equations in a diffusive scaling.
\newblock {\em J. of Comput. Phys. 281\/} (2015), 199–224.

\bibitem{ap_jin}
{\sc Jin, S.}
\newblock {Efficient asymptotic-preserving (AP) schemes for some multiscale kinetic equations}.
\newblock {\em SIAM J. Sci. Comput. 21\/} (1999), 441--454.

\bibitem{ap_jin2}
{\sc Jin, S.}
\newblock {Asymptotic preserving (AP) schemes for multiscale kinetic and hyperbolic equations: a review}.
\newblock {\em Riv. Mat. Univ. Parma\/} (2012), 177--216.

\bibitem{KENNEDY2003139}
{\sc Kennedy, C.~A., and Carpenter, M.~H.}
\newblock Additive runge-kutta schemes for convection-diffusion-reaction equations.
\newblock {\em Applied Numerical Mathematics 44}, 1 (2003), 139--181.

\bibitem{Labrunie}
{\sc Labrunie, S., Carrillo, J.~A., and Bertrand, P.}
\newblock Numerical study on hydrodynamic and quasi-neutral approximations for collisionless two-species plasmas.
\newblock {\em J. Comput. Phys. 200}, 1 (2004), 267--298.

\bibitem{Neg13}
{\sc Negulescu, C.}
\newblock Asymptotic-preserving schemes. {M}odeling, simulation and mathematical analysis of magnetically confined plasmas.
\newblock {\em Riv. Math. Univ. Parma (N.S.) 4}, 2 (2013), 265--343.

\bibitem{CDV_ASP_05}
{\sc P.~Crispel, P.~D., and Vignal, M.-H.}
\newblock An asymptotically stable discretization for the {E}uler-{P}oisson system in the quasi-neutral limit.
\newblock {\em C. R. Math. Acad. Sci. Paris\/} (2005), 323--328.

\bibitem{CDV07}
{\sc P.~Crispel, P.~D., and Vignal, M.-H.}
\newblock An asymptotic preserving scheme for the two-fluid {E}uler-{P}oisson model in the quasineutral limit.
\newblock {\em J. Comput. Phys.\/} (2007), 208--234.

\bibitem{DLV08}
{\sc P.~Degond, J.-G.~L., and Vignal., M.-H.}
\newblock Analysis of an asymptotic preserving scheme for the {E}uler-{P}oisson system in the quasineutral limit.
\newblock {\em SIAM J. Numer. Anal.\/} (2008), 1298--1322.

\bibitem{PR01}
{\sc Pareschi, L., and Russo, G.}
\newblock Implicit-explicit {R}unge-{K}utta schemes for stiff systems of differential equations.
\newblock In {\em Recent trends in numerical analysis}, vol.~3 of {\em Adv. Theory Comput. Math.} Nova Sci. Publ., Huntington, NY, 2001, pp.~269--288.

\bibitem{BFR16}
{\sc S.~Boscarino, F.~F., and Russo., G.}
\newblock High order semi-implicit schemes for time dependent partial differential equations.
\newblock {\em J. Sci. Comput.\/} (2016), 975--1001.

\bibitem{ARW95}
{\sc U.~M.~Ascher, S.~J.~R., and Wetton, B.~T.~R.}
\newblock Implicit-explicit methods for time-dependent partial differential equations.
\newblock {\em SIAM J. Numer. Anal.\/} (1995), 797--823.

\bibitem{AscherRuuthSpiteri}
{\sc U.M.~Ascher, S.J.~Ruuth, R.~S.}
\newblock Implicit–explicit runge–kutta methods for time-dependent partial differential equations.
\newblock {\em Appl. Numer. Math.\/} (1997), 151--167.

\end{thebibliography}
\end{document}